\documentclass[11pt,reqno]{amsart}
\usepackage[a4paper,margin=29mm,headheight=14pt]{geometry}
\usepackage[T1]{fontenc}
\usepackage{lmodern}
\usepackage{amsmath,amssymb,amsthm,mathtools,microtype}
\usepackage{xcolor}
\usepackage[colorlinks=true,linkcolor=black,citecolor=black,urlcolor=blue!55!black]{hyperref}
\numberwithin{equation}{section}
\newtheorem{theorem}{Theorem}[section]
\newtheorem{lemma}[theorem]{Lemma}
\newtheorem{proposition}[theorem]{Proposition}

\theoremstyle{definition}
\newtheorem{definition}[theorem]{Definition}
\theoremstyle{remark}
\newtheorem{remark}[theorem]{Remark}
\DeclareMathOperator{\tr}{Tr}
\allowdisplaybreaks[1]
\title[Single commutators in Schatten ideals]{Single commutators in Schatten ideals}
\author{Zhichao Liu}
\address{School of Mathematical Sciences,
Dalian University of Technology,
Dalian, {\rm 116024}, China }
\email{zhichaoliu@dlut.edu.cn}
\author{Shanshan Yang}

\address{School of Mathematical Sciences,
Dalian University of Technology,
Dalian, {\rm 116024}, China }
\email{yss@mail.dlut.edu.cn}
\date{}
\hypersetup{pdftitle={Single commutators in Schatten ideals},
  pdfauthor={Zhichao Liu and Shanshan Yang},pdfsubject={Version 3; revised introduction and background}}
\subjclass[2020]{Primary 47B47; Secondary 47B02, 47A65}

\keywords{Commutator, Schatten class, compact operator, similarity}
\begin{document}
\begin{abstract}
Let $r>2$ and let $p,q\in(0,\infty)$ satisfy
$r^{-1}=p^{-1}+q^{-1}$.
We prove that every $T\in \mathcal{S}_r(\mathcal H)$ is a single commutator
$T=[A,B]$ with $A\in \mathcal{S}_p(\mathcal H)$ and $B\in \mathcal{S}_q(\mathcal H)$.
Combined with L. G. Brown's theorem for $1 \leq r \leq 2$, this completely resolves the Pearcy--Topping problem for Schatten ideals.
We also obtain simultaneous operator and Schatten norm bounds
for the factors.
\end{abstract}
\maketitle
\enlargethispage{9pt}

\section{Introduction}\label{sec:intro}
\subsection{Background}
Let $\mathcal H$ be a complex separable infinite-dimensional Hilbert
space. We write $\mathcal B(\mathcal H)$ and $\mathcal K(\mathcal H)$
for the bounded and compact operators on $\mathcal H$, respectively. We write
$[A,B]=AB-BA$ for the commutator of $A$ and $B$.
For $1\le r<\infty$, let $\mathcal{S}_r(\mathcal H)$ denote the Schatten
$r$-class, with norm $\|\cdot\|_r$.

The commutator problem takes different forms in finite and infinite
dimensions. A finite matrix is a commutator precisely when its trace
is zero. On an infinite-dimensional Hilbert space, Wintner
\cite{Wintner} and Wielandt \cite{Wielandt} proved that the identity
is not a commutator of bounded operators. Halmos \cite{Halmos1963}
applied this result in the quotient by the compact operators to
show that no operator of the form $\lambda I+K$, with
$\lambda\ne0$ and $K$ compact, is a commutator.
Brown, Halmos and Pearcy \cite{BHP} proved
that every compact operator is a commutator of bounded operators.
In 1965, Brown and Pearcy \cite{BrownPearcy} completed the
classification by showing that the operators $\lambda I+K$ above
are the only exceptions. Their proof combines geometric reductions
under similarity with block matrix equations.

In 1971, Pearcy and Topping \cite{PT} raised the question of whether one can construct such single commutators strictly within specific ideals. They also proved that every compact operator is a finite sum of commutators of compact operators and asked whether one commutator always suffices. For Schatten classes, they obtained a partial result: if $r>1$, every $T\in \mathcal{S}_r(\mathcal H)$ is a finite sum of commutators with both factors in $\mathcal{S}_{2r}(\mathcal H)$. They also asked whether every trace-zero operator in
$\mathcal{S}_1(\mathcal H)$ is a commutator of Hilbert--Schmidt operators.

A major step toward solving the single-commutator problem was achieved by Anderson \cite{Anderson} in 1977, when he represented a rank-one orthogonal projection as a commutator of compact operators. Furthermore, his block construction applies to any compact operator with an infinite-dimensional reducing null subspace. Indeed, his theorem serves as the cornerstone of the present paper.

In 1980, Weiss \cite{Weiss} demonstrated that the trace-zero condition is insufficient to characterize even a finite sum of commutators of Hilbert--Schmidt operators. Consequently, the finite-dimensional trace criterion does not extend to this choice of factor ideals. That same year, Fan and Fong \cite{FF} characterized operators of the form $[A^*,A]$ for compact $A$, addressing the more restrictive case where the two factors are mutually adjoint.

In the case of Schatten ideals, L. G. Brown \cite{Brown} took a significant step forward in 1994 by identifying a further restriction on the Schatten exponents. He proved that if $A\in \mathcal{S}_p(\mathcal H)$, $B\in \mathcal{S}_q(\mathcal H)$,
$p^{-1}+q^{-1}\ge1/2$, and $[A,B]$ has finite rank, then
$\tr([A,B])=0$. 
In particular, Anderson's rank-one example cannot have both factors in $\mathcal{S}_4$.
More generally, when $1\le r\le2$, a rank-one orthogonal projection
cannot be written as $[A,B]$ with $A,B\in \mathcal{S}_{2r}(\mathcal H)$.
The remaining question is whether every $T\in \mathcal{S}_r(\mathcal H)$ admits such a
representation when $r>2$.

\vskip 2mm
{\bf Question (Pearcy--Topping).} For $r>2$, does every
$T\in \mathcal{S}_r(\mathcal H)$ admit $A,B\in \mathcal{S}_{2r}(\mathcal H)$ such
that $T=[A,B]$?
\vskip 2mm


More generally, the study of the single-commutator question for compact ideals also spans several decades. Belti\c{t}\u{a}, Patnaik and Weiss \cite{BPW} modified Anderson's construction to obtain
strictly positive compact examples. They also recorded earlier
unpublished examples of Davidson, Marcoux and Radjavi \cite{DMR}. In 2004, Dykema, Figiel, Weiss, and Wodzicki \cite{DFWW} developed a general commutator-space theory through the study of finite sums.
Much later, Dykema and Krishnaswamy-Usha \cite{DKU} showed that every compact nilpotent operator is a commutator of compact operators.  Recently, Loreaux,
Patnaik, Petrovi\'c and Weiss \cite{LPPW} reformulated the problem through finite block equations and studied how block sizes constrain
ideal membership of the factors.
More recently, Liu \cite{Liu} settled the Pearcy--Topping problem for the compact ideal by showing that every compact operator is a commutator of compact operators. The compression and trace-adjustment arguments in that work offer valuable insights for our proof. By integrating these techniques with the weighted construction described above, we resolve the Pearcy--Topping problem for Schatten ideals.

\subsection{Main result}

In this work, we obtain a strengthened version of this result for two prescribed factor ideals. The relation $\mathcal{S}_p(\mathcal H)\mathcal{S}_q(\mathcal H)=\mathcal{S}_r(\mathcal H)$ for $r^{-1}=p^{-1}+q^{-1}$ suggests
the appropriate exponents. Our theorem realizes every element of
$\mathcal{S}_r$ as a single commutator with factors in $\mathcal{S}_p$ and $\mathcal{S}_q$,
and gives simultaneous bounds in the operator and Schatten norms.

\begin{theorem}\label{thm:main}
Let $r>2$, and let $p,q\in(0,\infty)$ satisfy
$r^{-1}=p^{-1}+q^{-1}$.
For every $T\in \mathcal{S}_r(\mathcal H)$, there exist
$A\in \mathcal{S}_p(\mathcal H)$ and $B\in \mathcal{S}_q(\mathcal H)$ such that
$T=[A,B]$ and, simultaneously,
\[
 \|A\|\|B\|\le6\|T\|,
 \qquad
 \|A\|_p\|B\|_q
 \le16\zeta(r-1)^{1/r}\|T\|_r.
\]
Here $\zeta(r-1)=\sum_{n=1}^{\infty}n^{1-r}$.
\end{theorem}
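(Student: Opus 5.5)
We will follow Anderson's block construction for operators with an infinite-dimensional reducing null subspace, replace its unweighted blocks by weighted ones, and add a reduction that produces such a null subspace at controlled cost.

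\emph{Step 1 (the model case).} First suppose $T=T_0\oplus 0$ on $\mathcal H=\mathcal H_0\oplus\mathcal K$ with $\mathcal K$ infinite-dimensional. Identify $\mathcal K$ with $\bigoplus_{n\ge1}\mathcal H_0$, so $\mathcal H\cong\bigoplus_{n\ge0}\mathcal H_0$. We look for $A$ a weighted backward shift, $A e_{n+1}\otimes x = a_n e_n\otimes x$, and $B$ a weighted forward shift composed with $T_0$-dependent blocks. Take the simplest version first: $A=\sum_n a_n S_n^*$ and $B=\sum_n b_n S_n (T_0\text{-factor})$. Then $[A,B]$ is block diagonal with entries $a_nb_n - a_{n-1}b_{n-1}$ times the relevant factor. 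We want these to telescope to $T_0$ in slot $0$ and to $0$ in every other slot. This cannot be done with scalars alone, because an unweighted telescoping sum does not decay. Anderson's remedy is to let the $n$-th block carry $n^{-1}$ times a fixed operator at $n$ different positions. In Schatten norm this gives roughly $\sum_n n\cdot n^{-r}\|T_0\|_r^r=\zeta(r-1)\|T_0\|_r^r$, which converges exactly when $r>2$. This is the source of the constant $\zeta(r-1)^{1/r}$.

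To split exponents, factor $T_0=U|T_0|=(U|T_0|^{r/p})(|T_0|^{r/q})$ by polar decomposition. Put the first factor into the $A$-blocks and the second into the $B$-blocks, with weights $n^{-r/p}$ and $n^{-r/q}$ respectively (so that their product is $n^{-1}$). Hölder's inequality, $\|X\|_p^p=\||T_0|\|_r^r$, then gives
\[
\|A\|_p^p\lesssim\zeta(r-1)\|T_0\|_r^r,\qquad \|B\|_q^q\lesssim\zeta(r-1)\|T_0\|_r^r.
\]
Multiplying the $p$-th and $q$-th roots yields $\|A\|_p\|B\|_q\lesssim\zeta(r-1)^{1/r}\|T_0\|_r$. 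The operator-norm bound follows from the weights being at most $1$ and from the factorization having norms $\|T_0\|^{r/p}$ and $\|T_0\|^{r/q}$.

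\emph{Step 2 (reduction to the model).} For a general $T\in\mathcal S_r$ there need be no reducing null subspace. Following the compression and trace-adjustment ideas of \cite{Liu}, we would choose an orthonormal sequence along which $T$ is small. Compactness makes $\|Te_k\|$ and $\|T^*e_k\|$ as small as desired, and the choice can be made summable in $\mathcal S_r$. Decompose $\mathcal H=\mathcal H_1\oplus\mathcal K$ so that $T$ has a $2\times2$ block form whose off-diagonal and $\mathcal K$-corner parts are controlled. We then write $T=[A_1,B_1]+R$ with $R$ of the model form and absorb the corrections using a Brown--Pearcy-type block equation. Alternatively, and preferably, we use a similarity $T\mapsto ST S^{-1}$ with $S=I+N$, $N$ nilpotent of small norm, to kill the coupling to $\mathcal K$. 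Commutators transform covariantly, $[SAS^{-1},SBS^{-1}]=S[A,B]S^{-1}$, so the Schatten and operator norms change only by the factor $\|S\|\|S^{-1}\|$. That factor, together with a loss of $2$ in splitting $T$, is what we expect to account for the constants $6$ and $16$.

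\emph{Main obstacle.} The hardest part is Step 2: producing, with uniform constants, a decomposition in which $T$ is, up to a similarity close to the identity, of the form $T_0\oplus0$ on an infinite-dimensional complement, while keeping $\|T_0\|_r\le c\|T\|_r$ and $\|T_0\|\le c\|T\|$. We would first try taking $\mathcal K$ spanned by a sparse subsequence of an orthonormal basis. For that subsequence the rows and columns of $T$ have rapidly decaying $\mathcal S_r$ mass. We would then solve the upper-triangular similarity equation by a Neumann series whose convergence is guaranteed by the sparsity.
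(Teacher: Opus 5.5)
\textbf{There is a genuine gap in Step~2.} It aims at something impossible in general, and this is exactly where the content of the theorem lies.

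If $S^{-1}TS=T_0\oplus0$ with an infinite-dimensional zero summand $\mathcal K$, then $\mathcal K\subseteq\ker(S^{-1}TS)=S^{-1}\ker T$, so $\dim\ker T=\infty$. Take an injective operator such as $T=\operatorname{diag}(1,\frac12,\frac13,\dots)\in\mathcal S_r$. It is not similar, by any $S$ (close to the identity or not), to an operator with an infinite-dimensional reducing null subspace. No sparse subsequence and no Neumann series can change this, because similarity preserves $\dim\ker T$.

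Your fallback $T=[A_1,B_1]+R$, with $R$ of model form, produces only a sum of two commutators. Fusing them into a single commutator is the whole problem. The usual way of solving the off-diagonal block equations $CX-XA=E$ needs $\sigma(C)\cap\sigma(A)=\emptyset$. That fails here: both diagonal factors are compact on infinite-dimensional spaces, so both spectra contain $0$.

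Choosing an orthonormal sequence along which $T$ and $T^*$ are small is the right first move (Proposition~\ref{prop:form}). However, what a similarity can then achieve (Lemmas~\ref{lem:shear} and~\ref{lem:preparation}) is weaker than what you want:
\[
 S^{-1}TS=\begin{pmatrix}\widehat D&E\\F&G\end{pmatrix}.
\]
Here only the \emph{compression} $\widehat D$ has an infinite-dimensional reducing null subspace: $\widehat Df_n=\widehat D^*f_n=0$, yet $S^{-1}TSf_n$ is a nonzero vector of $\mathcal H_{1,n}$. The corner $G$ is trace class and in general nonzero, and its trace is made $0$ by a rank-one shear. The couplings do not vanish; they only satisfy $\|E\|,\|F\|<\eta$ and $\|Ee_n\|,\|F^*e_n\|\le e^{-n^4}$.

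Your Step~1 then yields only $\widehat D=[C,Z]$. Three ingredients are missing:
\begin{enumerate}
\item A representation $G=[A_G,B_G]$ with $A_G,B_G\in\mathcal S_p$. The paper builds it by finite-dimensional approximation through regular pairs (Lemmas~\ref{lem:seed}--\ref{lem:correction}, \ref{lem:onestep}, \ref{lem:approximation}); this is where $\tr G=0$ is needed.
\item Control of that construction so that $A_G=W\Lambda W^{-1}$, with $W$ close to $I$ and eigenvalues $\mu_n$ within $e^{-(c+3)n^3}$ of points $\lambda_n\in\rho(C)$. These points satisfy $|\lambda_n|=n^{-2/p}$ and $\|(C-\lambda_n)^{-1}\|\le e^{cn^3}$ (Lemma~\ref{lem:circle}).
\item Trace-class solutions of $CX-XA_G=E$ and $A_GY-YC=F$ (Lemma~\ref{lem:coupling}). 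This is why the local blocks must decay super-exponentially, not merely be summable in $\mathcal S_r$.
\end{enumerate}

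The constants $6$ and $16$ come from this gluing, with limits $\frac54\cdot4=5<6$ and $3\cdot4=12<16$. They do not come from a loss of $2$ in splitting $T$.

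A smaller point on Step~1: your factorization $T_0=U_1V_0$, with $U_1=U|T_0|^{r/p}$ and $V_0=|T_0|^{r/q}$, is only the first link. Anderson's block identities require $U_{k+1}V_k=V_{k-1}U_k=D_k$ for all $k\in\mathbb Z$. Unless $U_1V_0=V_0U_1$ (for example, $T_0$ normal), you need the bi-infinite chain of iterated polar decompositions used in Theorem~\ref{lem:stable}.
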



The case $p=q=2r$ answers the Pearcy-Topping question above. Brown's result shows that the restriction $r>2$ is necessary. More precisely, for
$1\le r<\infty$ and finite positive $p,q$ satisfying
$r^{-1}=p^{-1}+q^{-1}$, one has
\[
 \bigl\{[A,B]:A\in \mathcal{S}_p(\mathcal H),\ B\in \mathcal{S}_q(\mathcal H)\bigr\}
 =\mathcal{S}_r(\mathcal H)
 \quad\Longleftrightarrow\quad r>2.
\]
H\"older's inequality gives the inclusion into $\mathcal{S}_r$,
Theorem~\ref{thm:main} gives the reverse inclusion for $r>2$,
and a rank-one orthogonal projection excludes equality for
$1\le r\le2$.

The dependence on $r$ is necessary for the Schatten norm estimate; see Proposition \ref{prop:lower}. For a rank-one orthogonal projection $P$, every representation $P=[A,B]$ with $A\in \mathcal{S}_p(\mathcal H)$ and $B\in \mathcal{S}_q(\mathcal H)$  satisfies
\[
 \|A\|_{p}\|B\|_{q}\ge\frac{1}{2}\zeta(r-1)^{1/r}.
\]

\subsection{The method}

Assume that $p\le q$. For an operator with an infinite-dimensional reducing null subspace, the argument is essentially completed within the framework of Anderson's construction (see Theorem \ref{lem:stable}). 
The remaining argument proceeds by applying this method to one diagonal corner, whereas the other is dealt with by finite-dimensional approximation.

For an infinite-rank operator $T$, utilizing the compression and trace adjustment arguments from Section \ref{sec:geometry}, Lemma \ref{lem:preparation} establishes
\[
 S^{-1}TS=\begin{pmatrix}\widehat D&E\\F&G\end{pmatrix}
 \begin{matrix}\mathcal H_0\\\mathcal H_1\end{matrix},
 \qquad
 G=\begin{pmatrix}
 \lambda&0&0&\cdots\\
 0&G_1&0&\cdots\\
 0&0&G_2&\ddots\\
 \vdots&\vdots&\ddots&\ddots
 \end{pmatrix}.
\]
Here $\widehat D$ has an infinite-dimensional reducing null subspace, $G\in \mathcal{S}_1$ and $\tr G=0$. Applying Theorem~\ref{lem:stable} to the first
corner gives
\[
 \widehat D=[C,Z],\qquad C\in \mathcal{S}_p(\mathcal H_0),
 \quad Z\in \mathcal{S}_q(\mathcal H_0).
\]

In constructing the representation for $G$, a sequence $(\lambda_n) \in \ell_p$ belonging to the resolvent set $\rho(C)$ is selected according to Lemma \ref{lem:circle}. An initial regular pair is provided by Lemma \ref{lem:seed}. The key novel tool used in the proof is the concept of regular pairs.
Let $\widehat G_n$ be the trace-adjusted compressions given in Lemma~\ref{lem:trace-truncations}. At every stage $n$, we have
\[
 \left(
 \begin{pmatrix}A_{n-1}&0\\0&\lambda_n\end{pmatrix},
 \begin{pmatrix}B_{n-1}&0\\0&0\end{pmatrix}
 \right)
 \longmapsto(A_n,B_n),
 \qquad [A_n,B_n]=\widehat G_n.
\]
The successive corrections are summable in \(\mathcal S_p(\mathcal H)\), and
Lemma~\ref{lem:approximation} yields
\[
 A_n\xrightarrow{\mathcal{S}_p}A_G,\qquad
 B_n\xrightarrow{\mathcal{S}_p}B_G,\qquad [A_G,B_G]=G.
\]

It remains to determine the off-diagonal blocks. The following operator equations can be solved via a standard calculation (Lemma \ref{lem:coupling}):
\[
 CX-XA_G=E,\qquad A_GY-YC=F,
 \qquad X,Y\in \mathcal{S}_1.
\]
Hence
\[
 \left[
 \begin{pmatrix}C&0\\0&A_G\end{pmatrix},
 \begin{pmatrix}Z&X\\Y&B_G\end{pmatrix}
 \right]
 =\begin{pmatrix}\widehat D&E\\F&G\end{pmatrix}
 =S^{-1}TS.
\]
Conjugating by $S$ gives the required representation of $T$.

\section{Preliminaries}\label{sec:prelim}

In this paper, all Hilbert spaces are complex separable, and inner products are linear in the first variable.
\subsection{Schatten ideals}
\begin{definition}
For Hilbert spaces $\mathcal H_1,\mathcal H_2$, let $\mathcal{B}(\mathcal H_2,\mathcal H_1)$ and $\mathcal K(\mathcal H_2,\mathcal H_1)$ be the bounded and compact operators from $\mathcal H_2$ to $\mathcal H_1$, respectively. If $\mathcal H=\mathcal H_1=\mathcal H_2$, we write $\mathcal B(\mathcal H)$ and $\mathcal K(\mathcal H)$. For $T\in \mathcal B(\mathcal H_2,\mathcal H_1)$, write $T^*$ for its adjoint and $\|T\|$ for its operator norm. For $T\in \mathcal B(\mathcal H)$, write $\sigma(T)$ for its spectrum.
\end{definition}
\begin{definition}
For $T\in \mathcal B(\mathcal H)$, set $|T|=(T^*T)^{1/2}$. The operator $T$ is {\em trace class} if
\begin{equation*}
\sum_{j\in J}\langle |T|e_j,e_j\rangle<\infty
\end{equation*}
for an orthonormal basis $\{e_j\}_{j\in J}$. The trace class is denoted by $\mathcal{S}_1(\mathcal H)$. For $T\in \mathcal{S}_1(\mathcal H)$, its trace and trace norm are
\begin{equation*}
\operatorname{Tr}T=\sum_{j\in J}\langle Te_j,e_j\rangle,\qquad \|T\|_1=\operatorname{Tr}|T|.
\end{equation*}
The trace series converges absolutely and is independent of the basis. For an operator $T$ on a finite-dimensional space, we write $\operatorname{Tr} T$ for the unnormalized trace.
\end{definition}
We recall the trace formulation of the Schatten classes and their norms; see \cite{McCarthy,Simon}.
\begin{definition}
Let $\mathcal H_1,\mathcal H_2$ be Hilbert spaces and $1\le p<\infty$. The {\em Schatten $p$-class} $\mathcal{S}_p(\mathcal H_2,\mathcal H_1)$ is defined by
\begin{equation*}
\mathcal{S}_p(\mathcal H_2,\mathcal H_1)=\{T\in \mathcal K(\mathcal H_2,\mathcal H_1):|T|^p\in \mathcal{S}_1(\mathcal H_2)\},
\end{equation*}
where $|T|=(T^*T)^{1/2}$. This class is equipped with the Schatten $p$-norm
\begin{equation*}
\|T\|_p=\bigl(\operatorname{Tr}(|T|^p)\bigr)^{1/p},
\end{equation*}
where the trace is taken on $\mathcal H_2$. When $\mathcal H_1=\mathcal H_2=\mathcal H$, we write $\mathcal{S}_p(\mathcal H)=\mathcal{S}_p(\mathcal H,\mathcal H)$.
\end{definition}

All Schatten norms are unnormalized. We record some useful properties.

\begin{proposition}\label{prop:norms}
{\rm (1)} If $T\in \mathcal{S}_1(\mathcal H)$, then
$$
|\operatorname{Tr}(T)|\le\|T\|_1.
$$
{\rm (2)} If $1\le p<\infty$, $T\in \mathcal{S}_p(\mathcal H)$, and
$U,V\in \mathcal B(\mathcal H)$, then $UTV\in \mathcal{S}_p(\mathcal H)$ and
\begin{equation*}
\|UTV\|_p\le\|U\|\|T\|_p\|V\|,\qquad \|T\|\leq\|T\|_p.
\end{equation*}
{\rm (3)} Suppose $1\le p,q,r<\infty$ and
$r^{-1}=p^{-1}+q^{-1}$.
For $A\in \mathcal{S}_p(\mathcal H)$ and $B\in \mathcal{S}_q(\mathcal H)$,
the operators $AB$ and $BA$ belong to $\mathcal{S}_r(\mathcal H)$, with
\begin{equation*}
\|AB\|_r\le\|A\|_p\|B\|_q.
\end{equation*}
{\rm (4)} If $1\le p\le q<\infty$ and $T\in \mathcal{S}_p(\mathcal H)$,
then $T\in \mathcal{S}_q(\mathcal H)$ and
\begin{equation*}
\|T\|_q\le\|T\|_p.
\end{equation*}
\end{proposition}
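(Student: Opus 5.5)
This proposition collects standard facts, so the plan is to derive each item from the singular value decomposition and cite \cite{McCarthy,Simon} for the deeper inequalities.

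\emph{Item (1).} Write the polar decomposition $T=V|T|$ and expand $|T|=\sum_n s_n\langle\cdot,f_n\rangle f_n$ in terms of its singular values $s_n$. Then
\[
\operatorname{Tr}T=\sum_n s_n\langle Vf_n,f_n\rangle .
\]
Since $V$ is a partial isometry, $|\langle Vf_n,f_n\rangle|\le1$, and therefore $|\operatorname{Tr}T|\le\sum_n s_n=\|T\|_1$.

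\emph{Item (2).} I would use the minimax characterization of singular values:
\[
s_n(T)=\min\{\|T-F\|:\operatorname{rank}F<n\}.
\]
Replacing $F$ by $UFV$, which still has rank less than $n$, gives $s_n(UTV)\le\|U\|\,s_n(T)\,\|V\|$. Taking $\ell_p$ norms of the sequences yields $\|UTV\|_p\le\|U\|\|T\|_p\|V\|$. The bound $\|T\|=s_1(T)\le\|(s_n)\|_{\ell_p}=\|T\|_p$ is immediate.

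\emph{Item (4).} Normalize so that $\|T\|_p=1$. Then every $s_n\le1$, so $s_n^q\le s_n^p$ for $q\ge p$. Summing gives $\sum_n s_n^q\le1$, that is, $\|T\|_q\le1=\|T\|_p$.

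\emph{Item (3), Hölder.} This is the only nontrivial step. My first choice is Horn's inequality: for each $N$,
\[
\prod_{n\le N}s_n(AB)\le\prod_{n\le N}s_n(A)\,s_n(B).
\]
This follows from $\|\Lambda^N(AB)\|\le\|\Lambda^N A\|\,\|\Lambda^N B\|$ on antisymmetric tensors. Weyl's majorization lemma then converts this log-majorization into
\[
\sum_n s_n(AB)^r\le\sum_n \bigl(s_n(A)s_n(B)\bigr)^r .
\]
Scalar Hölder with exponents $p/r$ and $q/r$ finishes the argument for $AB$. The case of $BA$ is the same argument with the roles exchanged (or by taking adjoints).

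A lighter alternative for item (3) is Fan's inequality $s_{m+n-1}(AB)\le s_m(A)s_n(B)$. It loses only constants, but those constants are not acceptable here. So the Horn--Weyl route is the one I would cite directly from \cite{Simon}. I expect that majorization step to be the main obstacle.
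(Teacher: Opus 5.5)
Your proposal is correct and follows the paper's approach. The paper cites Simon (Theorems 3.1, 2.7(a), 2.8) for (1)--(3) and proves (4) from $|T|^q\le\|T\|^{q-p}|T|^p$ together with $\|T\|\le\|T\|_p$, which is the same singular-value comparison you make after normalizing; your self-contained sketches (polar decomposition for (1), approximation numbers for (2), Horn's inequality with Weyl majorization and scalar H\"older for (3)) are sound expansions of what those citations contain.
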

\begin{proof}
 (1), (2) and (3) follow from \cite[Theorems 3.1, 2.7(a), and 2.8]{Simon}.

For (4), let $1\le p\le q<\infty$ and
$T\in \mathcal{S}_p(\mathcal H)$.
Since
\begin{equation*}
|T|^q\le\|T\|^{q-p}|T|^p,
\end{equation*}
we obtain
\begin{equation*}
\operatorname{Tr}(|T|^q)
\le\|T\|^{q-p}\operatorname{Tr}(|T|^p)
\le\|T\|_p^q.
\end{equation*}
Hence $T\in \mathcal{S}_q(\mathcal H)$ and $\|T\|_q\le\|T\|_p$.
\end{proof}

\begin{definition}
A closed subspace $\mathcal M$ is a {\em reducing null subspace} of $T$ if $\mathcal M\subseteq\ker T\cap\ker T^*$. Equivalently,
\begin{equation*}T=\begin{pmatrix}T|_{\mathcal M^\perp}&0\\0&0\end{pmatrix}\begin{matrix}\mathcal M^\perp\\\mathcal M\end{matrix}.\end{equation*}
\end{definition}
For $u,v\in\mathcal H$, write $(u\otimes v^*)h=\langle h,v\rangle u$
for every $h\in\mathcal H$.
For a unit vector $e\in\mathcal H$, let $P_e=e\otimes e^*$ be the rank one projection in $\mathcal B(\mathcal H)$ onto $\mathbb C e$.

\subsection{Regular pair}

\begin{definition}\label{def:regular}
Let $(A,B)\in M_m(\mathbb{C})\times M_m(\mathbb{C})$, and fix $p\ge1$.
We use the maximal norm
\begin{equation*}
\|(X,Y)\|_p=\max\{\|X\|_p,\|Y\|_p\}
\end{equation*}
on $M_m(\mathbb{C})\times M_m(\mathbb{C})$.
Let
\begin{equation*}
\mathcal{V}_m=\{H\in M_m(\mathbb{C}):\operatorname{Tr}H=0\}.
\end{equation*}
Define the linear map
\begin{eqnarray*}
\mathcal{L}_{A,B}:M_m(\mathbb{C})\times M_m(\mathbb{C}) &\to& \mathcal{V}_m\,. \\
  (X,Y) &\mapsto & [X,B]+[A,Y]
\end{eqnarray*}
We call $(A,B)$ \emph{a regular pair} if $\mathcal{L}_{A,B}$
is surjective.
\end{definition}

For a regular pair, the restriction of $\mathcal L_{A,B}$ to a complement of its kernel is a linear isomorphism onto $\mathcal V_m$. Its inverse therefore defines a linear right inverse
\[\mathcal R_{A,B}:\mathcal V_m\longrightarrow M_m(\mathbb C)\times M_m(\mathbb C),\qquad \mathcal L_{A,B}\mathcal R_{A,B}=I_{\mathcal V_m}.\]
 Thus, for every $H\in\mathcal V_m$, the pair $(X,Y)=\mathcal R_{A,B}H$ satisfies
\[[X,B]+[A,Y]=H,\qquad \|(X,Y)\|_p\le\|\mathcal R_{A,B}\|\|H\|_p.\]
In particular, if there exists a constant $M>0$ such that
\[\max\{\|X\|_p,\|Y\|_p\}\le M\|H\|_p\]
for every $H\in\mathcal V_m$, where $(X,Y)=\mathcal R_{A,B}H$, then $\|\mathcal R_{A,B}\|\le M$.

Let $(A,B)$ be a regular pair and $S\in M_m(\mathbb C)$ be invertible.
For $H\in\mathcal V_m$, write
\begin{equation*}
    (X,Y)=\mathcal R_{A,B}(S^{-1}HS).
\end{equation*}
It is obvious that $(SAS^{-1},SBS^{-1})$ is regular and
\[
\mathcal R_{SAS^{-1},SBS^{-1}}H
=(SXS^{-1},SYS^{-1}).
\]
Moreover, by Proposition~\ref{prop:norms},
\begin{equation*}
\|\mathcal R_{SAS^{-1},SBS^{-1}}\|
\le \|S\|^2\|S^{-1}\|^2\|\mathcal R_{A,B}\|.
\end{equation*}

\begin{proposition}\label{prop:regular}
Let $(A,B)\in M_m(\mathbb{C})\times M_m(\mathbb{C})$.
The following conditions are equivalent.
\begin{enumerate}
\item[(1)]
The pair $(A,B)$ is regular, i.e., $\mathcal{L}_{A,B}$ is surjective.

\item[(2)]
The common commutant of $A$ and $B$ consists only of scalar matrices:
\begin{equation*}
\{Z\in M_m(\mathbb{C}):ZA=AZ,\ ZB=BZ\}=\mathbb{C}I_m.
\end{equation*}

\item[(3)]
There exists a linear map
$
\mathcal{R}_{A,B}:\mathcal{V}_m\longrightarrow M_m(\mathbb{C})\times M_m(\mathbb{C})
$
such that
$$
\mathcal{L}_{A,B}\mathcal{R}_{A,B}=I_{\mathcal{V}_m}.
$$
\end{enumerate}
\end{proposition}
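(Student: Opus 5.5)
The equivalence of (1) and (2) will come from a duality computation with the trace pairing $\langle U,V\rangle=\operatorname{Tr}(UV)$ on $M_m(\mathbb C)$; (1) and (3) are equivalent by elementary linear algebra.

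First note that $\mathcal L_{A,B}$ really maps into $\mathcal V_m$, since commutators have zero trace. The image of $\mathcal L_{A,B}$ is therefore a subspace of $\mathcal V_m$. It equals $\mathcal V_m$ exactly when its annihilator under the trace pairing is $\mathbb C I_m$. This uses that the pairing is nondegenerate and that the annihilator of $\mathcal V_m$ is $\mathbb C I_m$.

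Now compute the annihilator. A matrix $Z$ annihilates the image if and only if
\[
\operatorname{Tr}\bigl(Z([X,B]+[A,Y])\bigr)=0\quad\text{for all }X,Y.
\]
By cyclicity, $\operatorname{Tr}(Z[X,B])=\operatorname{Tr}(X[B,Z])$ and $\operatorname{Tr}(Z[A,Y])=\operatorname{Tr}(Y[Z,A])$. Taking $Y=0$ with $X$ arbitrary, and then $X=0$ with $Y$ arbitrary, nondegeneracy forces $[B,Z]=0$ and $[Z,A]=0$. The converse is immediate. So the annihilator of the image is exactly the common commutant of $A$ and $B$.

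This commutant always contains $\mathbb C I_m$. Hence surjectivity holds if and only if the commutant is exactly $\mathbb C I_m$, which is (1)$\Leftrightarrow$(2). As a check, the dimension count gives
\[
\dim\operatorname{ran}\mathcal L_{A,B}=m^2-\dim\{\text{common commutant}\},
\]
which equals $m^2-1=\dim\mathcal V_m$ precisely when (2) holds.

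For (1)$\Rightarrow$(3), choose a complement $W$ of $\ker\mathcal L_{A,B}$ in $M_m(\mathbb C)\times M_m(\mathbb C)$. Then $\mathcal L_{A,B}|_W$ is a bijection onto $\mathcal V_m$, and its inverse is the required $\mathcal R_{A,B}$, as already noted before the proposition. For (3)$\Rightarrow$(1), the identity $\mathcal L_{A,B}\mathcal R_{A,B}=I_{\mathcal V_m}$ immediately gives surjectivity.

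The only point needing care is the annihilator step: one must use the trace pairing on all of $M_m(\mathbb C)$ and correctly identify $\mathcal V_m^{\perp}=\mathbb C I_m$. This holds because $\operatorname{Tr}(ZH)=0$ for all trace-zero $H$ forces $Z$ to be scalar, as one sees by testing on the matrix units $E_{ij}$ with $i\neq j$ and on $E_{ii}-E_{jj}$.
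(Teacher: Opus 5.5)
Your proof is correct, but for the direction starting from (2) it takes a different route from the paper. For (1) $\Rightarrow$ (2) the two arguments agree. The paper takes $Z$ in the common commutant, sets $Z_0=Z-\frac{\operatorname{Tr}Z}{m}I_m$, writes $Z_0^*=\mathcal L_{A,B}(X,Y)$, and gets $\operatorname{Tr}(Z_0Z_0^*)=0$ by cyclicity. That is your annihilator computation, phrased with the Hilbert--Schmidt inner product.

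The difference is in going back from (2). You identify $(\operatorname{ran}\mathcal L_{A,B})^{\perp}$ with the common commutant and conclude by a dimension count. You then take $\mathcal R_{A,B}$ abstractly from a complement of $\ker\mathcal L_{A,B}$, which is the construction the paper records just before the proposition. The paper instead proves (2) $\Rightarrow$ (3) constructively:
\begin{itemize}
\item it sets $\Phi(T)=[[T,B^*],B]+[A,[A^*,T]]$ on $\mathcal V_m$;
\item it checks $\operatorname{Tr}(T^*\Phi(T))=\|[T,B^*]\|_2^2+\|[A^*,T]\|_2^2$, so $\Phi$ is injective and hence invertible;
\item it defines $\mathcal R_{A,B}H=([\Phi^{-1}H,B^*],[A^*,\Phi^{-1}H])$.
\end{itemize}
Since $\Phi=\mathcal L_{A,B}\mathcal L_{A,B}^*$ for the Hilbert--Schmidt inner product, this is the canonical Moore--Penrose right inverse $\mathcal L_{A,B}^*(\mathcal L_{A,B}\mathcal L_{A,B}^*)^{-1}$. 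It is given by a formula and involves no choice of complement.

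Your version is shorter and settles both directions of (1) $\Leftrightarrow$ (2) at once. It also gives $\dim\operatorname{ran}\mathcal L_{A,B}=m^2-\dim\{\text{common commutant}\}$ as a by-product. The paper's version buys an explicit right inverse. However, the later lemmas use only the existence of some linear right inverse with a norm bound, so nothing downstream depends on which one is chosen.
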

\begin{proof}
$(1) \Longrightarrow (2)$:
Let $Z\in M_m(\mathbb C)$ commute with both $A$ and $B$. 
Set
$
Z_0=Z-\frac{\operatorname{Tr}(Z)}m I_m.
$
Then $Z_0\in \mathcal{V}_m$ and $AZ_0=Z_0A$ and $BZ_0=Z_0B$.
Now the surjectivity of $\mathcal L_{A,B}$ yields  $(X,Y)\in M_m(\mathbb C)\times M_m(\mathbb C)$ such that
$
\mathcal L_{A,B}(X,Y)=Z_0^*.
$
Thus
\begin{equation*}
\begin{aligned}
\operatorname{Tr}(Z_0Z_0^*)
&=\operatorname{Tr}\bigl(Z_0([X,B]+[A,Y])\bigr)\\
&=\operatorname{Tr}\bigl((BZ_0-Z_0B)X\bigr)
  +\operatorname{Tr}\bigl((Z_0A-AZ_0)Y\bigr)\\
&=0.
\end{aligned}
\end{equation*}
Then $Z_0=0$, i.e., $Z$ is scalar.

$(2) \Longrightarrow (3)$:
Define a linear map
\begin{equation*}
    \Phi:\mathcal V_m\longrightarrow\mathcal V_m, \quad T \mapsto[[T,B^*],B]+[A,[A^*,T]].
\end{equation*}
For every $T\in\mathcal V_m$, a direct computation gives
\begin{equation*}
\operatorname{Tr}(T^*\Phi(T))
=\operatorname{Tr}([T,B^*]^*[T,B^*])+\operatorname{Tr}([A^*,T]^*[A^*,T])\geq 0.
\end{equation*}
Thus $\Phi(T)=0$ implies that $[T,B^*]=0$ and $[A^*,T]=0$. Then $T^*$ must be a scalar. Since  $\operatorname{Tr}(T^*)=\overline{\operatorname{Tr}(T)}=0$, we obtain $T=0$. Thus $\Phi$ is injective.

Now we define
\begin{eqnarray}
 \mathcal R_{A,B}:\mathcal{V}_m&\rightarrow & M_m(\mathbb C)\times M_m(\mathbb C) \\
  H &\mapsto & ([\Phi^{-1}(H),B^*],[A^*,\Phi^{-1}(H)])
\end{eqnarray}
Since $\Phi^{-1}$ is linear, $\mathcal R_{A,B}$ is linear and it is routine to check that
$\mathcal L_{A,B}\mathcal R_{A,B}=I_{\mathcal V_m}$. 

$(3) \Longrightarrow (1)$ is trivial.
\end{proof}

The following lemma gives a more precise characterization of trace-zero matrices.
\begin{lemma}\label{lem:zero-diagonal}
Let $0\ne D\in M_m(\mathbb C)$ have trace zero.
There exists an orthonormal basis $\{e_1,\ldots,e_m\}$
of $\mathbb C^m$ such that the matrix $(D_{ij})$ of $D$
with respect to this basis satisfies the following conditions.
\begin{enumerate}
\item[(1)] All diagonal entries are zero, i.e.,
$D_{ii}=0$ for $1\le i\le m$.
\item[(2)] For every nonempty proper subset
$J\subset\{1,\ldots,m\}$, there exist $i\in J$
and $j\notin J$ such that
\begin{equation*}
D_{ij}\ne0\quad\text{or}\quad D_{ji}\ne0.
\end{equation*}
\end{enumerate}
\end{lemma}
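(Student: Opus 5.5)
The plan has two stages. First obtain an orthonormal basis in which $D$ has zero diagonal. Then repeatedly apply $2\times 2$ unitary rotations, each of which keeps the diagonal zero and merges two ``components'' of the off-diagonal support, until condition (2) holds. Since $D\neq0$ and $\operatorname{Tr}D=0$, we have $m\ge2$: for $m=1$ the trace-zero condition forces $D=0$.

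\emph{Step 1 (zero diagonal).} I would argue by induction on $m$. The number $\operatorname{Tr}(D)/m=0$ is an average of the diagonal entries $\langle De_i,e_i\rangle$ in any orthonormal basis. Hence $0$ lies in the convex hull of these values, and by convexity of the numerical range (Toeplitz--Hausdorff) $0\in W(D)$. Pick a unit vector $e_1$ with $\langle De_1,e_1\rangle=0$. The compression of $D$ to $\{e_1\}^\perp$ then has trace $\operatorname{Tr}D-0=0$, and the induction hypothesis gives an orthonormal basis $e_2,\dots,e_m$ of $\{e_1\}^\perp$ in which this compression has zero diagonal. In the combined basis all $D_{ii}=0$. (This step does not need $D\neq0$.)

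\emph{Step 2 (connectivity).} Form the undirected graph $\Gamma$ on $\{1,\dots,m\}$ with an edge $i\sim j$ whenever $D_{ij}\ne0$ or $D_{ji}\ne0$. Condition (2) says exactly that $\Gamma$ is connected. Suppose instead that $\Gamma$ has components $J_1,\dots,J_k$ with $k\ge2$. Then $D$ is block diagonal with respect to this partition. Since $D\ne0$ and the diagonal vanishes, some component, say $J_1$, contains an edge; fix a vertex $i\in J_1$ and a neighbour $l\in J_1$ of $i$. Take any $j\in J_2$ and replace $e_i,e_j$ by
\[
f_i=ce_i+se_j,\qquad f_j=-\bar s e_i+\bar c e_j,\qquad |c|^2+|s|^2=1,\ c,s\neq0 .
\]
Because $D_{ii}=D_{jj}=D_{ij}=D_{ji}=0$, both new diagonal entries $\langle Df_i,f_i\rangle$ and $\langle Df_j,f_j\rangle$ vanish. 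All other diagonal entries are unchanged.

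For the neighbour $l$ we have $D_{lj}=0$ and $D_{jl}=0$, since $l$ and $j$ lie in different components. Hence
\[
\langle Df_i,e_l\rangle=\bar c\,D_{li},\qquad \langle De_l,f_i\rangle=\bar c\,D_{il},
\]
and similarly for $f_j$ with the nonzero coefficients $-s$ and $-\bar s$. So both $f_i$ and $f_j$ are adjacent to $l$. Every other vertex formerly adjacent to $j$ is adjacent to $f_i$ or $f_j$ by the same computation, because the $J_1$-terms vanish. Therefore $J_1\cup J_2$ becomes contained in a single component, and the untouched components $J_3,\dots,J_k$ remain separate at worst. The number of components strictly drops, and after at most $k-1$ merges $\Gamma$ is connected, giving (1) and (2).

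The main obstacle is the bookkeeping in this merge step: one must check that no edge \emph{inside} $J_1\cup J_2$ is destroyed in a way that disconnects it. To avoid this I would verify that every edge at $i$ or at $j$ survives at $f_i$ or at $f_j$, which follows because the mixed coefficients always involve one genuinely zero term. Together with the edges at $l$ shown above, this keeps the merged set connected. Isolated singleton components, where $J_2=\{j\}$ and $j$ has no edges, are handled by the same rotation.
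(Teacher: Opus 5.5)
Your proof is correct. Its core is the same computation the paper uses: a unitary rotation in the plane of $e_i,e_j$, with $i$ and $j$ in different diagonal blocks, keeps the diagonal zero because the $2\times2$ block of $D$ on $\{i,j\}$ vanishes. The rotation also copies the nonzero entries of rows and columns $i,j$ onto both new basis vectors.

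The packaging differs in two places.

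\begin{itemize}
\item \emph{Zero diagonal.} The paper simply cites Fillmore's Corollary~1. Your Toeplitz--Hausdorff induction is the standard proof of that fact, so it makes the lemma self-contained.
\item \emph{Condition (2).} The paper argues extremally. Among all zero-diagonal orthonormal bases it takes one with the maximal number of nonzero entries. If (2) failed, replacing $e_i,e_j$ by $(e_i+e_j)/\sqrt2$ and $(e_i-e_j)/\sqrt2$ would double the number of nonzero entries in rows and columns $i,j$ and leave all other entries alone. At least one such entry exists because $D_J\neq0$, so the count strictly increases, contradicting maximality.
\end{itemize}

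The extremal device sidesteps exactly the bookkeeping you flag as the main obstacle, because only a count is compared, not a connectivity structure. Your iterative merging instead produces the basis explicitly after at most $k-1\le m-1$ rotations. You carry out the needed verification correctly. Every former neighbour of $i$ or $j$ becomes adjacent to both $f_i$ and $f_j$, since one of the two mixed terms is genuinely zero. The components $J_3,\dots,J_k$ have no nonzero entries meeting the indices $i,j$, so they are unaffected, and the number of components drops by exactly one.

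One cosmetic slip: with inner products linear in the first variable, $\langle Df_i,e_l\rangle=c\,D_{li}$, not $\bar c\,D_{li}$. Since only nonvanishing matters, nothing changes.
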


\begin{proof}
By \cite[Corollary~1]{Fillmore}, there is an orthonormal
basis in which $D$ has zero diagonal. Among all such bases,
choose one for which the number of nonzero entries is maximal.

We claim that this orthonormal basis satisfies the required properties. By construction, Condition (1) holds. Suppose that (2) fails. Then, for some nonempty proper
subset $J\subset\{1,\ldots,m\}$, the coordinate decomposition
associated with $J$ and $J^c$ gives
\[
 D=\begin{pmatrix}D_J&0\\0&D_{J^c}\end{pmatrix}.
\]
Since $D\ne0$, we may interchange $J$ and $J^c$ and assume
that $D_J\ne0$. Choose $i\in J$ whose row in $D_J$ is
nonzero, and choose any $j\in J^c$.

Order the coordinates as
\[
 \{i\},\quad \{j\},\quad J\setminus\{i\},
 \quad J^c\setminus\{j\}.
\]
Relative to  this ordering,
\[
 D=
 \begin{pmatrix}
 0&0&a&0\\
 0&0&0&b\\
 c&0&D_1&0\\
 0&d&0&D_2
 \end{pmatrix},
 \qquad a\ne0,
\]
where $D_1$ and $D_2$ have zero diagonal.
Let
\[
 U=\frac1{\sqrt2}
 \begin{pmatrix}1&1\\1&-1\end{pmatrix}
 \oplus I_{m-2}.
\]
Then
\[
 U^*DU=
 \begin{pmatrix}
 0&0&a/\sqrt2&b/\sqrt2\\
 0&0&a/\sqrt2&-b/\sqrt2\\
 c/\sqrt2&c/\sqrt2&D_1&0\\
 d/\sqrt2&-d/\sqrt2&0&D_2
 \end{pmatrix}.
\]
The resulting matrix retains a zero diagonal. Since $a\neq 0$, the number of non-zero entries has strictly increased, contradicting the maximality assumption. Hence, Condition (2) holds.
\end{proof}

We use the diagonal-factor construction described in \cite[Section 2]{JOS}. Here, we supplement this construction with a suitable choice of orthonormal basis to ensure that the resulting pair is regular, and choose the diagonal entries of $A_*$ to be distinct and to lie in the prescribed positive interval $[\alpha,\beta]$. The stated Schatten norm bounds then follow from elementary estimates.
\begin{lemma}\label{lem:seed}
Let $0\ne D\in M_m(\mathbb{C})$ have trace zero, $2\leq p<\infty$,
and $0<\alpha<\beta$.
There is a regular pair $(A_*,B_*)$ with $[A_*,B_*]=D$ such that,
in a suitable orthonormal basis, $A_*$ is positive diagonal with
distinct entries in $[\alpha,\beta]$, and
\begin{equation*}
\|A_*\|_p\le\beta m^{1/p},
\qquad
\|B_*\|_p\le
\frac{m-1}{\beta-\alpha}\,
m^{1/2-1/p}\|D\|_p.
\end{equation*}
\end{lemma}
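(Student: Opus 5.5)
Choose, via Lemma~\ref{lem:zero-diagonal}, an orthonormal basis $\{e_1,\dots,e_m\}$ in which $D=(D_{ij})$ has zero diagonal and is irreducible in the sense of condition (2). In this basis set $A_*=\operatorname{diag}(a_1,\dots,a_m)$ with distinct entries in $[\alpha,\beta]$, say $a_i=\alpha+(i-1)(\beta-\alpha)/(m-1)$ (for $m\ge2$; note $m=1$ is impossible since $D\neq0$ has trace zero). Following the diagonal-factor construction of \cite{JOS}, define $B_*$ by
\[
(B_*)_{ij}=\frac{D_{ij}}{a_i-a_j}\quad(i\ne j),\qquad (B_*)_{ii}=0 .
\]
Then $[A_*,B_*]_{ij}=(a_i-a_j)(B_*)_{ij}=D_{ij}$ off the diagonal and $0=D_{ii}$ on it, so $[A_*,B_*]=D$. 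Since $0<a_i\le\beta$, we get $\|A_*\|_p=(\sum a_i^p)^{1/p}\le\beta m^{1/p}$.

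For $B_*$, the minimal gap gives $|a_i-a_j|\ge(\beta-\alpha)/(m-1)$, so entrywise $|(B_*)_{ij}|\le \frac{m-1}{\beta-\alpha}|D_{ij}|$. Since $p\ge2$, we use $\|B_*\|_p\le\|B_*\|_2$. Then $\|B_*\|_2\le\frac{m-1}{\beta-\alpha}\|D\|_2$, and Hölder on the $m$ singular values gives $\|D\|_2\le m^{1/2-1/p}\|D\|_p$. Combining these yields the stated bound.

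It remains to show regularity, which is the only nontrivial step. We use criterion (2) of Proposition~\ref{prop:regular}. Let $Z$ commute with $A_*$ and $B_*$. Since the $a_i$ are distinct, $Z$ is diagonal, $Z=\operatorname{diag}(z_i)$. Then $[Z,B_*]=0$ gives $(z_i-z_j)(B_*)_{ij}=0$, hence $(z_i-z_j)D_{ij}=0$ for all $i\ne j$. Consider the graph on $\{1,\dots,m\}$ whose edges are the pairs $\{i,j\}$ with $D_{ij}\ne0$ or $D_{ji}\ne0$. Condition (2) of Lemma~\ref{lem:zero-diagonal} says exactly that this graph is connected: otherwise a connected component $J$ would be a nonempty proper subset with no edges leaving it. Therefore $z_i$ is constant along edges, so $Z$ is scalar and $(A_*,B_*)$ is regular. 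The main care is checking that condition (2) corresponds precisely to connectivity, which is immediate from its formulation.
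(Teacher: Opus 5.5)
Your proposal is correct and follows the paper's proof essentially step for step. You use the same basis from Lemma~\ref{lem:zero-diagonal}, the same equally spaced diagonal $A_*$ and divided-difference $B_*$, the same $\mathcal S_p\le\mathcal S_2$ and H\"older estimates, and regularity via criterion (2) of Proposition~\ref{prop:regular}; your graph-connectivity phrasing is just a restatement of the paper's contradiction argument with the set $\{i: z_i=z_1\}$.
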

\begin{proof}
Since $D\ne0$ and $\operatorname{Tr}D=0$, we have $m\ge2$.
Choose an orthonormal basis as in Lemma~\ref{lem:zero-diagonal}.
In this basis, set $\lambda_i=\alpha+(\beta-\alpha)(i-1)/(m-1)$ for $1\leq i\leq m$ and define
\begin{equation*}
A_*=\operatorname{diag}(\lambda_1,\ldots,\lambda_m),
\qquad
(B_*)_{ij}=
\begin{cases}
\dfrac{D_{ij}}{\lambda_i-\lambda_j},&i\ne j,\\[6pt]
0,&i=j.
\end{cases}
\end{equation*}
Then $A_*$ is positive diagonal with distinct entries in
$[\alpha,\beta]$ and $[A_*,B_*]=D$.

We next show that $(A_*,B_*)$ is regular.
Let $Z$ commute with both $A_*$ and $B_*$. Then $Z$ commutes with $D$ and is diagonal.
Write $Z=\operatorname{diag}(z_1, \ldots, z_m)$.
Then $ZD=DZ$ gives
\begin{equation*}
(z_i-z_j)D_{ij}=0
\qquad(1\le i,j\le m).
\end{equation*}
If $Z$ is not scalar, then
$I=\{i:z_i=z_1\}$ is a nonempty proper subset of
$\{1,\ldots,m\}$.
For every $i\in I$ and $j\notin I$, we have $z_i\ne z_j$,
and hence $D_{ij}=D_{ji}=0$.
This contradicts the property of the chosen basis.
Thus $Z$ is scalar, and Proposition~\ref{prop:regular} shows that
$(A_*,B_*)$ is regular.

Finally, since each $\lambda_i\in [\alpha,\beta]$,
\begin{equation*}
\|A_*\|_p
=\left(\sum_{i=1}^m\lambda_i^p\right)^{1/p}
\le\beta m^{1/p}.
\end{equation*}
Since $|\lambda_i-\lambda_j|\ge(\beta-\alpha)/(m-1)$
for $i\ne j$ and $2\leq p <\infty$, we obtain
\begin{equation*}
\begin{aligned}
\|B_*\|_p
&\le\|B_*\|_2
=\left(\sum_{i\ne j}
\frac{|D_{ij}|^2}{|\lambda_i-\lambda_j|^2}\right)^{1/2}\le\frac{m-1}{\beta-\alpha}\|D\|_2
\le\frac{m-1}{\beta-\alpha}\,
m^{1/2-1/p}\|D\|_p.
\end{aligned}
\end{equation*}
This completes the proof.
\end{proof}

\subsection{Norm estimates and Resolvent points}

The next lemma appends a prescribed eigenvalue and gives a right inverse for the enlarged commutator differential.

\begin{lemma}\label{lem:extension}
Let $1\le p<\infty$, $0<\theta\le1/8$, and let
$(A,B)\in M_m(\mathbb C)\times M_m(\mathbb C)$
be a regular pair with $0\neq D=[A,B]$ and
$\|\mathcal R_{A,B}\|\le L$.
Let $\lambda\notin\sigma(A)$, $R=(A-\lambda I)^{-1}$,
and choose a unit vector $x\in\mathbb C^m$ such that $Dx\ne0$.
If
\begin{equation*}
\theta\|R\|\|RDx\|\le\frac18,
\end{equation*}
then
\begin{equation*}
\widehat A:=
\begin{pmatrix}
A&\theta(A-\lambda I)x\\
0&\lambda
\end{pmatrix}
\begin{matrix}
\mathbb C^m\\
\mathbb C
\end{matrix},
\qquad
\widehat B:=
\begin{pmatrix}
B&\theta RB(A-\lambda I)x\\
0&0
\end{pmatrix}
\begin{matrix}
\mathbb C^m\\
\mathbb C
\end{matrix}
\end{equation*}
form a regular pair with
\begin{equation*}
[\widehat A,\widehat B]
=
\begin{pmatrix}
D&0\\
0&0
\end{pmatrix},
\quad
\|\mathcal R_{\widehat A,\widehat B}\|
\le
5L+\frac{3+2\|R\|\|B\|}{\theta\|RDx\|}.
\end{equation*}
\end{lemma}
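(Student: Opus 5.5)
The commutator identity is a direct block computation. The $(1,1)$ entry of $[\widehat A,\widehat B]$ is $[A,B]=D$, and the $(2,2)$ entry is zero. For the $(1,2)$ entry, put $y=(A-\lambda I)x$ and use $AR=I+\lambda R$. The entry is
\[
\theta A RBy-\theta\lambda RBy-\theta By=\theta (A-\lambda I)RBy-\theta By=0.
\]
The $(2,1)$ entry vanishes trivially. So the real content is regularity together with the quantitative bound on $\mathcal R_{\widehat A,\widehat B}$.

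I would establish regularity by building an approximate right inverse directly and correcting it with a Neumann series, rather than going through Proposition~\ref{prop:regular}. Write $\widehat H=\begin{pmatrix}H_{11}&h_{12}\\ h_{21}&h_{22}\end{pmatrix}$ with $\operatorname{Tr}H_{11}+h_{22}=0$, and consider four elementary solution moves.

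(a) Off-diagonal blocks. Take $\widehat Y$ with only a $(1,2)$ entry $y_{12}=Rh_{12}$, or only a $(2,1)$ entry $y_{21}=-h_{21}R$. Since the lower-left block of $\widehat A$ is zero, these reproduce $h_{12}$, respectively $h_{21}$, in $[\widehat A,\widehat Y]$. They also create pollution in the other blocks, through the off-diagonal entry $\theta y$ of $\widehat A$; this pollution is of size $O(\theta\|R\|\,\|y\|)$ times the data.

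(b) The $(2,2)$ corner. Take $\widehat X$ with bottom row $w^*$ and $\widehat Y$ with bottom row $w^*BR$. The $(2,2)$ entry of $\mathcal L_{\widehat A,\widehat B}(\widehat X,\widehat Y)$ is then
\[
\theta\bigl(w^*RBy-w^*BR y\bigr)=\theta w^*R[B,A-\lambda I]x=-\theta\, w^*RDx .
\]
Choosing $w=-\frac{h_{22}}{\theta\|RDx\|^2}RDx$ realizes $h_{22}$. The cost is $|h_{22}|/(\theta\|RDx\|)$ times $O(1+\|R\|\|B\|)$, which is the origin of the term $\frac{3+2\|R\|\|B\|}{\theta\|RDx\|}$.

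(c) The $(1,1)$ block. After (b), the $(1,1)$ residual has trace zero, because the total trace of every commutator is zero and the other blocks are already matched. Apply $\mathcal R_{A,B}$ to it, embedded block-diagonally. This costs at most $L$ times the residual, but pollutes the $(1,2)$ block by terms such as $\theta(XR By\text{-type})$ and $\theta Y y$, whose size is at most of order $\theta\|R\|\,\|RDx\|\,L$ times the residual, after rewriting $y$-terms via $R$.

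Step (d) combines these moves. I would define $\mathcal R_0\widehat H$ by performing (a) on the off-diagonal blocks, then (b) on the resulting $(2,2)$ defect, then (c) on the $(1,1)$ defect. Then $\mathcal L\mathcal R_0=I-E$, and the error $E$ consists only of the pollution terms. The hypothesis $\theta\|R\|\|RDx\|\le 1/8$, together with $\theta\le1/8$, should give $\|E\|\le 1/2$ in the $p$-norm, by Proposition~\ref{prop:norms}(2). Then $\mathcal R_{\widehat A,\widehat B}=\mathcal R_0\sum_k E^k$ has norm at most $2\|\mathcal R_0\|$, and tracking the constants in $\|\mathcal R_0\|$ should give $5L+\frac{3+2\|R\|\|B\|}{\theta\|RDx\|}$.

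I expect the main obstacle to be the bookkeeping in step (d). One has to order the moves so that each pollution term carries a factor $\theta\|R\|\|RDx\|$ rather than something like $\theta\|R\|\|A-\lambda I\|$. If the constants do not close, I would instead solve the off-diagonal blocks exactly after (c), feeding back only the $(1,1)$ and $(2,2)$ pollution, to reduce $E$ to a smaller system.
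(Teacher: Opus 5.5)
There is a genuine gap, located at the obstacle you flagged, and reordering the moves cannot remove it. Your moves act directly on $(\widehat A,\widehat B)$, whose corner entries are $\theta(A-\lambda I)x$ and $\theta RB(A-\lambda I)x=\theta(Bx-RDx)$. Only the piece $-\theta RDx$ is small in the sense of the hypothesis.

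Consequently the $(2,1)$ move $y_{21}=-h_{21}R$ deposits $-\theta(A-\lambda I)x\,h_{21}R$ in the $(1,1)$ block. Move (b) deposits $\theta\bigl((A-\lambda I)x\,w^*BR-RB(A-\lambda I)x\,w^*\bigr)$ there, with $\|w\|=|h_{22}|/(\theta\|RDx\|)$. These terms have sizes governed by $\theta\|(A-\lambda I)x\|\,\|R\|$ and by $\|B\|\,\|(A-\lambda I)x\|\,\|R\|/\|RDx\|$. The hypothesis $\theta\|R\|\|RDx\|\le 1/8$ says nothing about $\|(A-\lambda I)x\|\,\|R\|$, and the lemma must hold for every admissible $x$, including $x$ along a top singular vector of an ill-conditioned $A-\lambda I$.

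The $(1,1)$ residual must eventually pass through $\mathcal R_{A,B}$, which you know only through $\|\mathcal R_{A,B}\|\le L$. Your bound therefore acquires products such as $L\,\|B\|\,\|R\|\,\|(A-\lambda I)x\|/\|RDx\|$. The target $5L+\frac{3+2\|R\|\|B\|}{\theta\|RDx\|}$ contains $L$ and $\|R\|\|B\|/(\theta\|RDx\|)$ only additively, so it does not dominate these products. If you count these terms as Neumann error instead, $\|E\|\le 1/2$ fails in general. Likewise, the $(1,2)$ pollution $\theta\bigl(X_0RB(A-\lambda I)x-Y_0(A-\lambda I)x\bigr)$ from (c) is not of order $\theta\|R\|\|RDx\|L$. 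Separately, since the $(2,2)$ entry is $-\theta w^*RDx$, your $w$ should carry $\overline{h_{22}}$ rather than $h_{22}$.

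The missing idea is to change frame by the shear $S=\begin{pmatrix} I&-\theta x\\ 0&1\end{pmatrix}$. One checks that $S^{-1}\widehat AS=A\oplus\lambda$ and $S^{-1}\widehat BS=\begin{pmatrix} B&-\theta v\\ 0&0\end{pmatrix}$ with $v=RDx$, so the only remaining coupling is the small vector $\theta v$. In this frame your three moves close exactly, with no Neumann series. Take the row vector $w=-h_{22}v^*/(\theta\|v\|^2)$, set $(X_0,Y_0)=\mathcal R_{A,B}(H_{11}-\theta vw)$, and define $X=\begin{pmatrix} X_0&0\\ w&0\end{pmatrix}$ and $Y=\begin{pmatrix} Y_0&R(h_{12}+\theta X_0v)\\ (wB-h_{21})R&0\end{pmatrix}$.

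Move (b) now pollutes the $(1,1)$ block only by $\theta vw$, whose norm is $|h_{22}|$. Hence $\mathcal R_{A,B}$ is applied to something of norm at most $2\|\widehat H\|_p$. Move (c) pollutes the $(1,2)$ block only by $-\theta X_0v$. This is absorbed exactly at cost $\theta\|R\|\|v\|\|X_0\|_p\le\frac18\|X_0\|_p$, which is where the hypothesis enters. The explicit linear right inverse gives regularity and the bound $\frac94L+\frac{5/4+\|R\|\|B\|}{\theta\|v\|}$ in the new frame. Conjugating back costs $\|S\|^2\|S^{-1}\|^2\le(1+\theta)^4\le 2$, which yields the stated estimate.
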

\begin{proof}
Set $v=RDx$. Since $R$ is invertible and $Dx\ne0$, $v\ne0$.
Direct multiplication gives
\begin{equation*}
[\widehat A,\widehat B]
=
\begin{pmatrix}
D&\theta\bigl((A-\lambda I)R-I\bigr)B(A-\lambda I)x\\
0&0
\end{pmatrix}
=
\begin{pmatrix}
D&0\\
0&0
\end{pmatrix}.
\end{equation*}
It remains to check the norm estimate.
Set
\begin{equation*}
S=
\begin{pmatrix}
I&-\theta x\\
0&1
\end{pmatrix},
\qquad
S^{-1}=
\begin{pmatrix}
I&\theta x\\
0&1
\end{pmatrix}.
\end{equation*}
Then $\|S\|,\|S^{-1}\|\le1+\theta$.
Since $\theta RB(A-\lambda I)x-\theta Bx=-\theta v$,
\begin{equation*}
S^{-1}\widehat AS=
\begin{pmatrix}
A&0\\
0&\lambda
\end{pmatrix},
\qquad
S^{-1}\widehat BS=
\begin{pmatrix}
B&-\theta v\\
0&0
\end{pmatrix}.
\end{equation*}
Let $\widetilde A:=S^{-1}\widehat AS$ and
$\widetilde B:=S^{-1}\widehat BS$.

We first construct a linear right inverse of
$\mathcal L_{\widetilde A,\widetilde B}$.
Let
\begin{equation*}
H=
\begin{pmatrix}
H_0&h\\
k&\gamma
\end{pmatrix}\in \mathcal{V}_{m+1},
\qquad
w:=-\frac{\gamma v^*}{\theta\|v\|^2}.
\end{equation*}
Then
\begin{equation*}
-\theta wv=\gamma,
\qquad
\operatorname{Tr}(H_0-\theta v\,w)
=-\gamma-\theta wv=0.
\end{equation*}
Thus we may set
\begin{equation*}
(X_0,Y_0)=\mathcal R_{A,B}(H_0-\theta v\,w).
\end{equation*}
Define
\begin{equation*}
X:=
\begin{pmatrix}
X_0&0\\
w&0
\end{pmatrix},
\qquad
Y:=
\begin{pmatrix}
Y_0&R(h+\theta X_0v)\\
(wB-k)R&0
\end{pmatrix}.
\end{equation*}
Direct computation gives
\begin{equation*}
\begin{aligned}
\mathcal L_{\widetilde A,\widetilde B}(X,Y)
&=[X,\widetilde B]+[\widetilde A,Y]\\
&=
\begin{pmatrix}
[X_0,B]+[A,Y_0]+\theta v\,w&h\\
k&-\theta wv
\end{pmatrix}
=H.
\end{aligned}
\end{equation*}
Hence $\mathcal L_{\widetilde A,\widetilde B}$ is surjective. Since $w$ depends linearly on $\gamma$, $\mathcal R_{A,B}$ is linear and  all the blocks of $X$ and $Y$
depend linearly on $H$, 
\begin{eqnarray*}
 \mathcal R_{\widetilde A,\widetilde B}:\mathcal{V}_{m+1}&\to& M_{m+1}(\mathbb C)\times M_{m+1}(\mathbb C) \\
  H &\mapsto & (X,Y)
\end{eqnarray*}
is a linear right inverse of
$\mathcal L_{\widetilde A,\widetilde B}$.
Thus $(\widetilde A,\widetilde B)$ is regular by Proposition~\ref{prop:regular}.

We next estimate the norm of
$\mathcal R_{\widetilde A,\widetilde B}$.
Since
\begin{equation*}
\|H_0\|_p,\ \|h\|,\ \|k\|,|\gamma|
\le\|H\|_p,
\qquad
\|w\|
=\frac{|\gamma|}{\theta\|v\|}
\le\frac{\|H\|_p}{\theta\|v\|}.
\end{equation*}
Consequently,
\begin{equation*}
\|(X_0,Y_0)\|_p
\le\|\mathcal R_{A,B}\|\,\|H_0-\theta v\,w\|_p
\le2L\|H\|_p.
\end{equation*}
Hence
\begin{equation*}
\|X\|_p
\le\|X_0\|_p+\|w\|
\le
\left(2L+\frac{1}{\theta\|v\|}\right)\|H\|_p,
\end{equation*}
\begin{equation*}
\begin{aligned}
\|Y\|_p
&\le
\|Y_0\|_p
+\|R\|\bigl(\|h\|+\theta\|v\|\|X_0\|_p\bigr)
+\bigl(\|w\|\|B\|+\|k\|\bigr)\|R\|\\
&\le
\left(
2L(1+\theta\|R\|\|v\|)
+2\|R\|
+\frac{\|R\|\|B\|}{\theta\|v\|}
\right)\|H\|_p.
\end{aligned}
\end{equation*}
It follows from $\theta\|R\|\|v\|\le\frac18$ that
\begin{equation*}
\begin{aligned}
\|\mathcal R_{\widetilde A,\widetilde B}\|
&\le
\max\left\{
2L+\frac{1}{\theta\|v\|},\
2L(1+\theta\|R\|\|v\|)
+2\|R\|
+\frac{\|R\|\|B\|}{\theta\|v\|}
\right\}\\
&\le
2L(1+\theta\|R\|\|v\|)
+\frac{1+\|R\|\|B\|+2\theta\|R\|\|v\|}
{\theta\|v\|}\\
&\le
\frac94L+\frac{5/4+\|R\|\|B\|}{\theta\|v\|}.
\end{aligned}
\end{equation*}

Finally, Proposition~\ref{prop:regular} shows that $(\widehat A,\widehat B)$
is regular and $\mathcal L_{\widehat A,\widehat B}$
admits a linear right inverse satisfying
\begin{equation*}
\begin{aligned}
\|\mathcal R_{\widehat A,\widehat B}\|
&\le
\|S\|^2\|S^{-1}\|^2
\|\mathcal R_{\widetilde A,\widetilde B}\|\\
&\le
(1+\theta)^4
\left(
\frac94L+\frac{5/4+\|R\|\|B\|}{\theta\|v\|}
\right)\\
&\le
5L+\frac{3+2\|R\|\|B\|}{\theta\|v\|}.\\
\end{aligned}
\end{equation*}
The third estimate uses  $(1+\theta)^4\leq 2$.
This completes the proof.
\end{proof}

\begin{lemma}\label{lem:correction}
Let $1\le p<\infty$ and $L>0$, and let
$(A,B)\in M_m(\mathbb{C})\times M_m(\mathbb{C})$
be a regular pair with $\|\mathcal{R}_{A,B}\|\le L$.
Suppose $H\in M_m(\mathbb{C})$, $\operatorname{Tr}H=0$, and
\begin{equation*}
\|H\|_p\le\frac{1}{16L^2}.
\end{equation*}
Then there exists $(X,Y)\in M_m(\mathbb{C})\times M_m(\mathbb{C})$ with
$\|(X,Y)\|_p\le 2L\|H\|_p$ such that
\begin{equation*}
[A+X,B+Y]=[A,B]+H.
\end{equation*}
Moreover, $(A+X,B+Y)$ is regular and $\|\mathcal{R}_{A+X,B+Y}\|\leq 2L$.
\end{lemma}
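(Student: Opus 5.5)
We solve the nonlinear equation $[A+X,B+Y]-[A,B]=H$, that is,
\[
 \mathcal L_{A,B}(X,Y)+[X,Y]=H,
\]
by a Newton-type fixed-point iteration built on the right inverse $\mathcal R=\mathcal R_{A,B}$. Since $\operatorname{Tr}[X,Y]=0$ for matrices, the term $H-[X,Y]$ always lies in $\mathcal V_m$. So we look for $(X,Y)$ in the range of $\mathcal R$ satisfying
\[
 (X,Y)=\Phi(X,Y):=\mathcal R\bigl(H-[X,Y]\bigr).
\]
Applying $\mathcal L_{A,B}$ to a fixed point gives exactly $\mathcal L_{A,B}(X,Y)=H-[X,Y]$, which is the desired identity.

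We work on the closed ball
\[
 \mathcal B=\{(X,Y):\|(X,Y)\|_p\le 2L\|H\|_p\}
\]
in $M_m\times M_m$ with the max norm. This is a finite-dimensional complete space.

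\textbf{Self-map.} Proposition~\ref{prop:norms} together with $\|Y\|\le\|Y\|_p$ gives $\|[X,Y]\|_p\le 2\|X\|_p\|Y\|_p$. So on $\mathcal B$,
\[
 \|\Phi(X,Y)\|_p\le L\bigl(\|H\|_p+8L^2\|H\|_p^2\bigr)\le L\|H\|_p\bigl(1+\tfrac12\bigr)\le 2L\|H\|_p,
\]
where we used $8L^2\|H\|_p\le\frac12$.

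\textbf{Contraction.} We write
\[
 [X,Y]-[X',Y']=[X-X',Y]+[X',Y-Y'],
\]
which gives
\[
 \|\Phi(X,Y)-\Phi(X',Y')\|_p\le 2L\cdot 2\cdot 2L\|H\|_p\,\|(X,Y)-(X',Y')\|_p\le \tfrac14\|\cdot\|.
\]
Banach's fixed-point theorem then yields $(X,Y)$ with the stated bound. Alternatively, Brouwer's theorem suffices, since the space is finite-dimensional.

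\textbf{Regularity and the bound on $\mathcal R_{A+X,B+Y}$.} We have
\[
 \mathcal L_{A+X,B+Y}=\mathcal L_{A,B}+\mathcal E,\qquad \mathcal E(U,V)=[U,Y]+[X,V].
\]
Then $\|\mathcal E(U,V)\|_p\le 2(\|Y\|_p+\|X\|_p)\|(U,V)\|_p\le 8L\|H\|_p\|(U,V)\|_p$. Here we again use $\|XV\|_p\le\|X\|\|V\|_p\le\|X\|_p\|V\|_p$. Since $\mathcal E$ maps into $\mathcal V_m$, the operator $\mathcal E\mathcal R$ acts on $\mathcal V_m$ with norm
\[
 \|\mathcal E\mathcal R\|\le 8L^2\|H\|_p\le\tfrac12.
\]
Therefore $I+\mathcal E\mathcal R$ is invertible on $\mathcal V_m$ by a Neumann series, with inverse norm at most $2$. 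Set
\[
 \mathcal R'=\mathcal R(I+\mathcal E\mathcal R)^{-1}.
\]
Then $(\mathcal L_{A,B}+\mathcal E)\mathcal R'=I_{\mathcal V_m}$, because $\mathcal L_{A,B}\mathcal R=I$, and $\|\mathcal R'\|\le 2L$. Proposition~\ref{prop:regular} gives regularity.

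One subtlety is that $\mathcal R_{A+X,B+Y}$ in the paper denotes some chosen right inverse. We therefore take $\mathcal R'$ as that choice, in line with the convention stated after Definition~\ref{def:regular}.

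\textbf{Main obstacle.} The only delicate point is the constant bookkeeping, namely the factor $2$ in the commutator estimate against the hypothesis $1/(16L^2)$. The estimates above show that the constants close with room to spare.
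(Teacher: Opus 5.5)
Your proposal is correct and follows the paper's proof essentially step for step: the same contraction map $(X,Y)\mapsto\mathcal R_{A,B}(H-[X,Y])$ on the ball of radius $2L\|H\|_p$, and the same Neumann-series right inverse $\mathcal R_{A,B}(\mathcal L_{A+X,B+Y}\mathcal R_{A,B})^{-1}=\mathcal R_{A,B}(I+\mathcal E\mathcal R_{A,B})^{-1}$ with norm at most $2L$. One harmless arithmetic slip: your Lipschitz constant $2L\cdot 2\cdot 2L\|H\|_p=8L^2\|H\|_p$ is bounded by $1/2$, not $1/4$, which is still a contraction.
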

\begin{proof}
If $H=0$, take $X=Y=0$. Otherwise, let $\eta=2L\|H\|_p$.
Define
\begin{eqnarray*}
\Psi:M_m(\mathbb{C})\times M_m(\mathbb{C}) &\to& M_m(\mathbb{C})\times M_m(\mathbb{C}).\\
 (U,V) &\mapsto& \mathcal{R}_{A,B}(H-[U,V])
\end{eqnarray*}
This is well defined since $\operatorname{Tr}(H-[U,V])=0$.
Let $\mathcal{B}_{\eta}$ be the closed ball of radius $\eta$,
centred at $(0,0)$.
For $(U,V)\in\mathcal{B}_{\eta}$, Proposition~\ref{prop:norms} gives
\begin{equation*}
\begin{aligned}
\|\Psi(U,V)\|_p
&\le \|\mathcal{R}_{A,B}\|\,\|H-[U,V]\|_p\le L\|H\|_p+2L\eta^2
\le\frac{3\eta}{4}.
\end{aligned}
\end{equation*}
Thus $\Psi(\mathcal{B}_{\eta})\subseteq\mathcal{B}_{\eta}$.
For $(U,V), (U',V')$ in $\mathcal{B}_{\eta}$, Proposition~\ref{prop:norms} gives
\begin{equation*}
\begin{aligned}
\|\Psi(U,V)-\Psi(U',V')\|_p
&\le L\bigl(
\|[U-U',V]\|_p+\|[U',V-V']\|_p
\bigr)\\
&\le 2L\eta\bigl(
\|U-U'\|_p+\|V-V'\|_p
\bigr)\\
&\le 4L\eta\|(U-U',V-V')\|_p.
\end{aligned}
\end{equation*}
Since $4L\eta\le 1/2$, $\Psi$ is a contraction on the
complete metric space $\mathcal{B}_{\eta}$.
By the contraction mapping theorem, there exists $(X,Y)\in\mathcal{B}_{\eta}$ such that
\begin{equation*}
    (X,Y)=\mathcal{R}_{A,B}(H-[X,Y]).
\end{equation*}
It follows that $[X,B]+[A,Y]=H-[X,Y]$, and hence $[A+X,B+Y]=[A,B]+H$.

Next we construct a linear right inverse of
$\mathcal{L}_{A+X,B+Y}$ satisfying the required norm bound.
Note that $(X,Y)\in\mathcal{B}_{\eta}$. Hence,
\begin{equation*}
\begin{aligned}
\|\mathcal{L}_{A+X,B+Y}-\mathcal{L}_{A,B}\|
&=\sup_{\|(U,V)\|_p\le 1}
\|[U,Y]+[X,V]\|_p\\
&\le\sup_{\|(U,V)\|_p\le 1}
\bigl(
2\|U\|_p\|Y\|_p+2\|X\|_p\|V\|_p
\bigr)\\
&\le 2(\|X\|_p+\|Y\|_p)
\le 4\eta.
\end{aligned}
\end{equation*}
It follows that
\begin{equation*}
\|\mathcal{L}_{A+X,B+Y}\mathcal{R}_{A,B}-I_{\mathcal{V}_m}\|
\le
\|\mathcal{L}_{A+X,B+Y}-\mathcal{L}_{A,B}\|\,
\|\mathcal{R}_{A,B}\|
\le 4L\eta\le\frac{1}{2}.
\end{equation*}
The Neumann series shows that
$\mathcal{L}_{A+X,B+Y}\mathcal{R}_{A,B}$
is invertible with
\begin{equation*}
\bigl\|
(\mathcal{L}_{A+X,B+Y}\mathcal{R}_{A,B})^{-1}
\bigr\|
\leq 2.
\end{equation*}
Thus
\begin{equation*}
\mathcal{R}_{A+X,B+Y}
:=\mathcal{R}_{A,B}
\bigl(
\mathcal{L}_{A+X,B+Y}\mathcal{R}_{A,B}
\bigr)^{-1}.
\end{equation*}
is a linear right inverse of
$\mathcal{L}_{A+X,B+Y}$, and
\begin{equation*}
\|\mathcal{R}_{A+X,B+Y}\|\le \|\mathcal{R}_{A,B}\|\,
\bigl\|
(\mathcal{L}_{A+X,B+Y}\mathcal{R}_{A,B})^{-1}
\bigr\|\le 2L.
\end{equation*}
Moreover, by Proposition~\ref{prop:regular}, the pair $(A+X,B+Y)$ is regular.
\end{proof}

We choose a point on each prescribed circle by a finite-rank approximation and a finite-dimensional determinant.
\begin{lemma}\label{lem:circle}
Let $1\le p<\infty$ and $M\ge1$.
Set $c=3(2M)^p$. For any operator $C \in \mathcal{S}_p(\mathcal{H})$ satisfying $\Vert{}C\Vert{} \le 1$ and $\Vert{}C\Vert{}_p \le M$, there exists a sequence $(\lambda_n)_{n \ge 1} \subset \mathbb{C} \setminus \sigma(C)$ with
$|\lambda_n|=n^{-2/p}$ such that
\begin{equation*}
\|(C-\lambda_nI)^{-1}\|\le e^{cn^3}
\qquad(n\ge1).
\end{equation*}
\end{lemma}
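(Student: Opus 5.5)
The plan is to choose each $\lambda_n$ separately: a finite-rank splitting of $C$ reduces invertibility of $C-\lambda$ to a Fredholm determinant, and subharmonicity of $\log|\det|$ on the exterior of a disc gives a point on the circle where that determinant is not small. Fix $n$ and set $\rho=\rho_n=n^{-2/p}\le 1$.

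\emph{Step 1 (splitting).} Write the Schmidt expansion $C=\sum_k s_k u_k\otimes v_k^*$. Let $F$ consist of the terms with $s_k>\rho/2$, and set $C'=C-F$. Then $\|C'\|\le\rho/2$. Since $\sum_k s_k^p\le M^p$, the rank of $F$ satisfies
\[
N\le (2M/\rho)^p=(2M)^p n^2 .
\]
In addition, every singular value of $F$ is at most $\|C\|\le1$.

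\emph{Step 2 (factorisation).} For $|\lambda|>\rho/2$ the operator $C'-\lambda I$ is invertible, and $\|(C'-\lambda I)^{-1}\|\le (|\lambda|-\rho/2)^{-1}$. We factor
\[
C-\lambda I=(C'-\lambda I)\bigl(I+K(\lambda)\bigr),\qquad K(\lambda)=(C'-\lambda I)^{-1}F .
\]
Here $K(\lambda)$ has rank at most $N$ and depends holomorphically on $\lambda$ in the region $|\lambda|>\rho/2$. Moreover $K(\lambda)\to0$ as $\lambda\to\infty$. Hence the Fredholm determinant $d(\lambda)=\det(I+K(\lambda))$ is holomorphic on $\{|\lambda|>\rho/2\}\cup\{\infty\}$, with $d(\infty)=1$. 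The operator $C-\lambda I$ is invertible exactly when $d(\lambda)\neq0$.

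\emph{Step 3 (choice of $\lambda_n$).} Substitute $z=1/\lambda$. Then $u(z)=\log|d(1/z)|$ is subharmonic on the disc $|z|<2/\rho$. The sub-mean value property on the circle $|z|=1/\rho$ gives
\[
\frac1{2\pi}\int_0^{2\pi}\log|d(\rho e^{it})|\,dt\ \ge\ \log|d(\infty)|=0 .
\]
Zeros of $d$ on the circle are isolated, so we may choose $\lambda_n$ with $|\lambda_n|=\rho$ and $|d(\lambda_n)|\ge1$. In particular $\lambda_n\notin\sigma(C)$.

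\emph{Step 4 (resolvent estimate).} This is the step where the constant has to come out right. The crude bound $\|\det(I+K)(I+K)^{-1}\|\le e^{\|K\|_1}$ is too weak: it would give an exponent of order $n^{2+2/p}\cdot n^{2/p}$. Instead I would use the finite-rank bound
\[
\|(I+K)^{-1}\|\le \frac{\prod_{j=1}^{N}\bigl(1+s_j(K)\bigr)}{|\det(I+K)|}.
\]
To prove it, restrict to an $N$-dimensional-type decomposition adapted to the range of $K$. The inverse is then given by the adjugate, and the adjugate entries are controlled by products of singular values via Weyl/Hadamard inequalities.

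At $\lambda_n$ we have $s_j(K)\le \|(C'-\lambda_n I)^{-1}\|\,s_j(F)\le 2/\rho=2n^{2/p}$. Combined with $|d(\lambda_n)|\ge1$ this gives
\[
\|(C-\lambda_nI)^{-1}\|\le \frac{2}{\rho}\,\bigl(1+2n^{2/p}\bigr)^{N}\le \exp\Bigl(\log(2n^{2})+(2M)^p n^2\log(1+2n^{2})\Bigr).
\]
Since $\log(1+2n^2)\le 2n$ and $\log(2n^2)\le (2M)^p n^3$, the right-hand side is at most $e^{3(2M)^p n^3}=e^{cn^3}$.

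The main obstacle is a careful proof of the adjugate inequality in Step 4. Everything else is routine.
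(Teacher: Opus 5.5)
Your proposal is correct and follows essentially the same route as the paper. Both arguments cut $|C|$ at level $\rho/2$, which leaves a finite-rank part of rank at most $(2M)^pn^2$, and factor $C-\lambda I$ through a finite-dimensional determinant equal to $1$ at $\lambda=\infty$ (the paper's $z=0$). Both then use a maximum-principle argument to pick $\lambda_n$ on $|\lambda|=\rho$ with $|\det|\ge1$, and a Hadamard-type inverse bound that gives exactly $\frac{2}{\rho}\bigl(1+\frac{2}{\rho}\bigr)^{N}$.

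The differences are cosmetic. Your Step~4 inequality follows at once from $\|A^{-1}\|=\prod_{j<m}s_j(A)/|\det A|$ on the reducing subspace $\operatorname{ran}K+\operatorname{ran}K^*$, combined with $s_j(I+K)\le 1+s_j(K)$. The paper instead applies Hadamard's inequality to the compression $D(z_0)$ of $I-K(z_0)$ to $\operatorname{ran}F$ and uses $(I-K)^{-1}=I+D^{-1}K$.
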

\begin{proof}
Fix $n\ge1$, and set $\rho=n^{-2/p}$. Define
\begin{equation*}
P=\chi_{(\rho/2,\infty)}(|C|),
\qquad F=CP,\qquad R=C(I-P).
\end{equation*}
Then $C=F+R$ and
\begin{equation*}
|C|^p\ge(\rho/2)^pP, \qquad
\|F\|\leq 1,\qquad
\|R\|=\bigl\||C|(I-P)\bigr\|\le\frac{\rho}{2}.
\end{equation*}
Consequently,
\begin{equation*}
\operatorname{Tr}(P)
\le\left(\frac2\rho\right)^p\operatorname{Tr}(|C|^p)
\le\left(\frac{2M}{\rho}\right)^p<\infty.
\end{equation*}
Since $\operatorname{Tr}(P)=\operatorname{rank}P$, $P$ has finite rank.
Hence
\begin{equation*}
d:=\operatorname{rank}F
\le\operatorname{rank}P
\le\left(\frac{2M}{\rho}\right)^p
=(2M)^pn^2.
\end{equation*}

If $d=0$, then $C=R$. For any $|\lambda|=\rho$,
the Neumann series  gives
\begin{equation*}
\|(C-\lambda I)^{-1}\|
=\frac1\rho
\left\|\sum_{k=0}^{\infty}\left(\frac R\rho\right)^k\right\|
\le\frac1\rho\sum_{k=0}^{\infty}\frac{1}{2^{k}}
=\frac2\rho.
\end{equation*}

Suppose that $d\ge1$, and let $\mathcal E:=\operatorname{ran}F$.
For $|z|<2/\rho$, we have $\|zR\|<1$, so the Neumann series
implies that $I-zR$ is invertible. Define operator-valued functions
\begin{equation*}
K(z)=zF(I-zR)^{-1},
\qquad
D(z)=I_{\mathcal E}-K(z)|_{\mathcal E}.
\end{equation*}
Then $K(z)$ and $D(z)$ are well defined because
$I-zR$ is invertible and $\operatorname{ran}K(z)\subseteq\mathcal E$.
Direct multiplication gives
$$
I-zC=(I-K(z))(I-zR).
$$
The Neumann series shows that the operator-valued functions $K(z)$
and $D(z)$ are analytic on $|z|<2/\rho$.
Since $\mathcal E$ is finite dimensional, $\det D(z)$ is analytic
there as well. For $|z|\le1/\rho$,
$$
\|(I-zR)^{-1}\|\le2,\qquad
\|K(z)\|\le\frac2\rho,\qquad
\|D(z)\|\le1+\frac2\rho.
$$
Using the Maximum Modulus Principle
and $\det D(0)=1$, there exists a $z_0$ such that
\begin{equation*}
|z_0|=\frac1\rho,
\qquad
|\det D(z_0)|\ge1.
\end{equation*}
Thus $D(z_0)$ is invertible. Given any unit vector
$x_1\in\mathcal E$, extend it to an orthonormal basis
$x_1,\ldots,x_d$ of $\mathcal E$.
Hadamard's inequality \cite[Theorem 7.8.1]{Horn-Johnson} gives
\begin{equation*}
1\le|\det D(z_0)|
\le\prod_{j=1}^{d}\|D(z_0)x_j\|
\le\|D(z_0)x_1\|\,\|D(z_0)\|^{d-1}.
\end{equation*}
By the arbitrariness of $x_1$, for any unit vector $v\in\mathcal E$, we obtain
\begin{equation*}
\|D(z_0)^{-1}v\|\le\|D(z_0)\|^{d-1}\|v\|.
\end{equation*}
Taking the supremum over all such $v$, we have
$$
\|D(z_0)^{-1}\|
\le\|D(z_0)\|^{d-1}
\le\left(1+\frac2\rho\right)^{d-1}.
$$

Since $\operatorname{ran}K(z_0)\subseteq\mathcal E$ and
$D(z_0)=(I-K(z_0))|_{\mathcal E}$,
$D(z_0)^{-1}K(z_0)$ can be viewed as an operator on $\mathcal H$,
and direct multiplication gives
$$
(I-K(z_0))^{-1}=I+D(z_0)^{-1}K(z_0).
$$
Now we obtain
\begin{equation*}
(I-z_0C)^{-1}
=(I-z_0R)^{-1}\bigl(I+D(z_0)^{-1}K(z_0)\bigr).
\end{equation*}
Now let $\lambda_n=z_0^{-1}$. Thus $\lambda_n\notin\sigma(C)$ and
$|\lambda_n|=\rho$. It follows  that
\begin{equation*}
\begin{aligned}
\|(C-\lambda_nI)^{-1}\|
&=|z_0|\,\|(I-z_0C)^{-1}\|\\
&\le |z_0|\,\|(I-z_0R)^{-1}\|\,
\|I+D(z_0)^{-1}K(z_0)\|\\
&\le\frac2\rho
\bigl(1+\|D(z_0)^{-1}\|\,\|K(z_0)\|\bigr)\\
&\le\frac2\rho
\left[1+\frac2\rho\left(1+\frac2\rho\right)^{d-1}\right]\le\frac2\rho\left(1+\frac2\rho\right)^d.
\end{aligned}
\end{equation*}

Finally, by applying $d\leq (2M)^pn^2$, $\ln(2n^2)\le n^2$ and $\ln(1+2n^2)\le2n$, we obtain
\begin{equation*}
\begin{aligned}
\ln\!\left[\frac{2}{\rho}\left(1+\frac{2}{\rho}\right)^d\right]
&=\ln(2n^{2/p})+d\ln(1+2n^{2/p})\\
&\le\ln(2n^2)+(2M)^pn^2\ln(1+2n^2)\\
&\le n^2+2(2M)^pn^3\\
&\le3(2M)^pn^3.
\end{aligned}
\end{equation*}
Hence $c=3(2M)^p$ has the required property.
\end{proof}

\subsection{Anderson's construction}
We adapt Anderson's construction \cite{Anderson};
see also \cite[proof of Theorem 3.1]{BPW}. While almost all of the underlying conclusions are already known, we focus here on providing explicit simultaneous operator and Schatten norm estimates.
\begin{theorem}\label{lem:stable}
Let $1\le p,q<\infty$ and $2<r<\infty$ satisfy
$r^{-1}=p^{-1}+q^{-1}$.
If $0\neq D\in \mathcal{S}_r(\mathcal H)$ has an infinite-dimensional reducing
null subspace, then there exist $C\in \mathcal{S}_p(\mathcal H)$ and
$Z\in \mathcal{S}_q(\mathcal H)$ such that $D=[C,Z]$ and
\begin{equation*}
\|C\|\le2\|D\|^{r/p},
\qquad
\|Z\|\le2\|D\|^{r/q},
\end{equation*}
and
\begin{equation*}
\|C\|_p\le2\zeta(r-1)^{1/p}\|D\|_r^{r/p},
\qquad
\|Z\|_q\le2\zeta(r-1)^{1/q}\|D\|_r^{r/q}.
\end{equation*}
\end{theorem}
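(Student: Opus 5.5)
The plan is to reduce to an operator supported on one "copy" of a Hilbert space and then build $C$ and $Z$ as block-weighted shifts with carefully chosen weights, following Anderson \cite{Anderson} and \cite[proof of Theorem 3.1]{BPW}.

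\textbf{Reduction.} Let $\mathcal M$ be the infinite-dimensional reducing null subspace. Choose a closed subspace $\mathcal H_0\supseteq\mathcal M^\perp$ with $\dim\mathcal H_0=\infty$ such that $\mathcal H_0^\perp\cap\mathcal M$ is still infinite dimensional; this uses part of $\mathcal M$ if necessary. Then identify
\[
\mathcal H\cong\mathcal H_0\otimes\ell^2(\mathbb N_0)=\bigoplus_{k\ge0}\mathcal H_0^{(k)},\qquad D=D_0\oplus0\oplus0\oplus\cdots.
\]
Write the polar decomposition $D_0=U|D_0|$ and factor
\[
D_0=D_1D_2,\qquad D_1=U|D_0|^{r/p},\quad D_2=|D_0|^{r/q}.
\]
Since $r/p+r/q=1$, this gives $\|D_1\|=\|D\|^{r/p}$, $\|D_1\|_p=\|D\|_r^{r/p}$, and similarly for $D_2$ with $q$. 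Every norm bound in the statement is then the product of one of these quantities with a scalar factor coming from the shift weights.

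\textbf{Shift construction.} Regroup the copies $\mathcal H_0^{(k)}$ into levels: level $n\ge1$ is $\mathcal H_0\otimes\mathbb C^n$, and level $0$ is one copy $\mathcal H_0$. This is possible because countably many copies can be regrouped arbitrarily. Set
\[
C=\sum_{n\ge0}D_1\otimes\alpha_nJ_n^*,\qquad Z=\sum_{n\ge0}D_2\otimes\beta_nJ_n,
\]
where $J_n:\mathbb C^{n}\to\mathbb C^{n+1}$ are fixed scalar matrices (with $\mathbb C^0$ read as $\mathbb C$ at level $0$). Here $C$ lowers the level by one and $Z$ raises it by one. Since $D_1$ and $D_2$ need not commute, I would order the tensor factors so that $CZ$ produces $D_1D_2=D_0$ at each level. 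If $ZC$ produces $D_2D_1$ instead, I would pass to a symmetric version: use $D_0=D_1D_2$ on the "upward" half and the transposed chain on the other, as in Anderson. Ignoring this issue, the commutator is block diagonal. At level $0$ it equals $\alpha_0\beta_0D_0J_0^*J_0$, and we normalize this to $D_0$. At level $n\ge1$ it equals $D_0\otimes\bigl(\alpha_n\beta_nJ_n^*J_n-\alpha_{n-1}\beta_{n-1}J_{n-1}J_{n-1}^*\bigr)$, which must vanish.

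\textbf{Choice of weights.} The $n$-th level should contribute about $n\cdot n^{-r}$ to the $r$-th power norms, which is exactly what produces $\zeta(r-1)$. So I would take $\alpha_n\approx n^{-r/p}$ and $\beta_n\approx n^{-r/q}$. The matrices $J_n$ should be chosen so that $J_n^*J_n$ and $J_{n-1}J_{n-1}^*$ are multiples of the identity whose ratio offsets the change in $\alpha_n\beta_n\approx n^{-1}$. A natural candidate is the $\sqrt{\,\cdot\,}$-weighted inclusions coming from the creation operator on symmetric tensors. With this choice the telescoping identity holds and $\|J_n\|\le 2$. The operator-norm bounds then follow from $\sup_n\alpha_n\|J_n\|\le2$. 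The Schatten bounds follow from
\[
\|C\|_p^p=\|D_1\|_p^p\sum_n\alpha_n^p\|J_n\|_p^p\le2^p\zeta(r-1)\|D\|_r^{r},
\]
and similarly for $Z$.

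\textbf{Main obstacle.} I expect the hard part to be choosing $(J_n,\alpha_n,\beta_n)$ so that three requirements hold at once. The telescoping cancellation must be exact at every level $n\ge1$. The $p$- and $q$-Schatten sums must both come out as $\zeta(r-1)$ with constant $2$. And $C$ and $Z$ must individually be compact. The non-commutativity of $D_1$ and $D_2$ is a secondary issue. I would try first to mimic the explicit finite matrices in \cite{BPW} and check the identity at level $1$ before attempting the general $n$.
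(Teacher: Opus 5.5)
\textbf{Verdict: genuine gap.} Your reduction and the factorization $D_0=D_1D_2$ are fine, but the shift construction has a structural gap that no choice of $(J_n,\alpha_n,\beta_n)$ can repair.

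\textbf{Why one-directional shifts fail.} Suppose $C$ only lowers and $Z$ only raises the level, with blocks $C_n$ from level $n+1$ to level $n$ and $Z_n$ from level $n$ to level $n+1$. Then $[C,Z]$ is block diagonal, with blocks $C_0Z_0$ and $C_nZ_n-Z_{n-1}C_{n-1}$ for $n\ge1$. The requirements are $C_0Z_0=D_0$ and $C_nZ_n=Z_{n-1}C_{n-1}$. Combined with $\sigma(ST)\setminus\{0\}=\sigma(TS)\setminus\{0\}$, they force every $C_nZ_n$ to have the same nonzero spectrum as $D_0$. Hence $\|C_n\|\,\|Z_n\|\ge\varrho(D_0)$ for all $n$, where $\varrho$ is the spectral radius. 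But compactness of $C$ forces $\|C_n\|\le\|P_{n+1}C^*\|\to0$, since the level projections tend strongly to zero. So for any $D_0$ with $\varrho(D_0)>0$, neither factor can even be compact; a rank-one projection, Anderson's original case, is such a $D_0$.

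\textbf{The same failure inside your tensor ansatz.} Since $\|J^*J\|=\|JJ^*\|=\|J\|^2$, the level identities and the level-$0$ normalization give $|\alpha_n\beta_n|\,\|J_n\|^2=1$ for every $n$. This contradicts $\alpha_n\approx n^{-r/p}$, $\beta_n\approx n^{-r/q}$, $\|J_n\|\le2$. Moreover, $J_{n-1}J_{n-1}^*$ has rank at most $n-1$ on $\mathbb C^n$, so it is never a nonzero multiple of the identity there. Finally, the level-$n$ block is really $D_1D_2\otimes(\cdots)-D_2D_1\otimes(\cdots)$, so noncommutativity is a second genuine obstruction, not a side issue.

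\textbf{The missing idea: Anderson's block-tridiagonal design.} The paper uses it on $\bigoplus_{n\ge1}\mathcal M^{\oplus n}$, where $\mathcal M$ supports $D_0$.
\begin{itemize}
\item Each factor has both a raising and a lowering part. $C$ has blocks $A_n$ ($n\times(n+1)$) above a zero block diagonal and $B_n$ ($(n+1)\times n$) below it; $Z$ is built the same way from $X_n,Y_n$.
\item With $\alpha=r/p$ and $\beta=r/q$, the scalar weights are $\frac{(n+1-j)^\beta}{n}$, $-\frac{j^\beta}{n+1}$, $\frac{j^\alpha}{n}$, $\frac{(n+1-j)^\alpha}{n+1}$. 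They make the level-one diagonal block equal to $D_0$ and the higher diagonal blocks $A_nY_n+B_{n-1}X_{n-1}-X_nB_n-Y_{n-1}A_{n-1}$ vanish. The distance-two blocks $A_nX_{n+1}-X_nA_{n+1}$ and $B_{n+1}Y_n-Y_{n+1}B_n$ also vanish.
\item Noncommutativity is handled by a two-sided chain $(D_k)_{k\in\mathbb Z}$, not a single factorization. The chain is obtained by repeatedly swapping polar factors, with factors satisfying $U_{k+1}V_k=V_{k-1}U_k=D_k$. The operator entries are $U_k$'s and $V_k$'s whose indices shift along each block, so every pair of products that must cancel equals the same $D_k$.
\end{itemize}

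\textbf{Where the constants come from.} Each block has at most $n$ entries in distinct rows and columns. The scalar weights are at most $n^{-\alpha}$ (resp.\ $n^{-\beta}$), and $\|U_k\|_p\le\|D\|_r^{\alpha}$, $\|V_k\|_q\le\|D\|_r^{\beta}$. This gives $\|A_n\|_p^p\le n^{1-r}\|D\|_r^r$, matching your heuristic that level $n$ contributes about $n\cdot n^{-r}$. Summing yields $\zeta(r-1)$, and the factor $2$ comes from adding the upper and lower block diagonals.
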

\begin{proof}
Write
\begin{equation*}
D=\begin{pmatrix}D_0&0\\
0&0
\end{pmatrix}
\begin{matrix}
\mathcal M\\
\mathcal M^\perp
\end{matrix},
\end{equation*}
where $\mathcal M\ne\{0\}$ and $\mathcal M^\perp$ is
infinite-dimensional. Set $\alpha=r/p$, $\beta=r/q,$ so $\alpha+\beta=1.
$

We use Anderson's block construction \cite{Anderson}, with weights
adapted to $p$ and $q$.  Starting from $D_0$, we construct sequences $(D_k)_{k\in\mathbb Z}$,
$(U_k)_{k\in\mathbb Z}$, and $(V_k)_{k\in\mathbb Z}$
by induction in both directions.
Once $D_k$ has been defined, write $D_k=W_k|D_k|$ for its polar decomposition.

For $k\geq0$, define
\begin{equation*}
U_{k+1}=W_k|D_k|^\alpha,
\qquad
V_k=|D_k|^\beta,
\qquad
D_{k+1}=V_kU_{k+1}.
\end{equation*}
For $k\leq0$, define
\begin{equation*}
U_k=|D_k|^\alpha,
\qquad
V_{k-1}=W_k|D_k|^\beta,
\qquad
D_{k-1}=U_kV_{k-1}.
\end{equation*}
Then
\begin{equation*}
U_{k+1}V_k=V_{k-1}U_k=D_k
\qquad(k\in\mathbb Z).
\end{equation*}
The operator norm inequality and H\"older's inequality give
$\|D_k\|\le\|D\|$ and $\|D_k\|_r\le\|D\|_r$. Consequently,
\[
\begin{aligned}
\|U_k\|&\le\|D\|^\alpha,
&\qquad \|U_k\|_p&\le\|D\|_r^\alpha,\\
\|V_k\|&\le\|D\|^\beta,
&\qquad \|V_k\|_q&\le\|D\|_r^\beta.
\end{aligned}
\]

Following Anderson's notation, let $A_n,X_n$ be
$n\times(n+1)$ operator matrices and $B_n,Y_n$ be
$(n+1)\times n$ operator matrices. Their only nonzero entries are
\[
\begin{aligned}
(A_n)_{j,j}
 =\frac{(n+1-j)^\beta}{n}U_{n+2-2j},\quad
&
(B_n)_{j+1,j}
 =-\frac{j^\beta}{n+1}U_{n+1-2j},\\
(X_n)_{j,j+1}
 =\frac{j^\alpha}{n}V_{n-2j},\qquad\qquad
&
(Y_n)_{j,j}
 =\frac{(n+1-j)^\alpha}{n+1}V_{n+1-2j},
\end{aligned}
\quad 1\le j\le n.
\]
On $\bigoplus_{n\ge1}\mathcal M^{\oplus n}$, set
\[
C=
\begin{pmatrix}
0&A_1&0&\cdots\\
B_1&0&A_2&\ddots\\
0&B_2&0&\ddots\\
\vdots&\ddots&\ddots&\ddots
\end{pmatrix},
\qquad
Z=
\begin{pmatrix}
0&X_1&0&\cdots\\
Y_1&0&X_2&\ddots\\
0&Y_2&0&\ddots\\
\vdots&\ddots&\ddots&\ddots
\end{pmatrix}.
\]
Anderson's block calculation remains valid without modification, as the above relations together with $\alpha + \beta = 1$ ensure that
\[
[C,Z]=\begin{pmatrix}D_0&0\\
0&0
\end{pmatrix}.
\]
Since $\bigoplus_{n\ge2}\mathcal M^{\oplus n}
\cong\mathcal M^\perp$, a unitary fixing the first summand
$\mathcal M$ carries $D_0\oplus0$ to $D$.
Conjugating $C$ and $Z$ by this unitary gives the required
factors on $\mathcal H$, with all norm estimates preserved.

It remains to verify the norm estimates.
Each of $A_n,B_n,X_n,Y_n$ has at most $n$ nonzero
operator entries, in distinct rows and columns. Hence
\[
\max\{\|A_n\|,\|B_n\|\}
\le n^{-\alpha}\|D\|^\alpha,
\qquad
\max\{\|X_n\|,\|Y_n\|\}
\le n^{-\beta}\|D\|^\beta.
\]
Taking suprema over $n$ for the upper and lower block
diagonals shows that $C$ and $Z$ define bounded operators, with
\[
\|C\|\le 2\|D\|^\alpha,
\qquad
\|Z\|\le 2\|D\|^\beta.
\]

Since the nonzero operator entries occupy distinct rows
and columns, and $\alpha p=\beta q=r$, we have
\[
\|C\|_p
\le
(\sum_{n\ge1}\|A_n\|_p^p)^{1/p}
+
(\sum_{n\ge1}\|B_n\|_p^p)^{1/p}
\le2\zeta(r-1)^{1/p}\|D\|_r^\alpha.
\]
An identical argument yields
\[
\|Z\|_q \le 2 \zeta(r-1)^{1/q} \|D\|_r^\beta.
\]
Since $r > 2$, these series converge. Transporting $C$ and $Z$ back to $\mathcal{H}$ via the preceding unitary equivalence completes the proof.
\end{proof}

\section{Localized compressions and trace adjustment}\label{sec:geometry}

In this section, suppose that $T$ is a compact operator of infinite rank. We then construct a similarity such that one compression admits an infinite-dimensional reducing null subspace, and the complementary compression is trace-class with zero trace.  We use the finite-codimension selection in
\cite[Lemma 3.1]{Liu}, followed by a rank-one shear to adjust the
trace, while controlling the similarity and the off-diagonal blocks.

\begin{lemma}\label{lem:vector-estimate}
Let $T\in\mathcal B(\mathcal H)$ and $0<\delta\le1/4$.
Suppose that $x,y$ are unit vectors satisfying
\[
 Tx\ne0,\qquad \langle x,y\rangle=\langle Tx,y\rangle=0,
 \qquad
 \|Ty\|\le\delta^2\|Tx\|.
\]
Let
\[
 v=\frac{\delta x+y}{\sqrt{1+\delta^2}},
 \qquad
 \lambda=\langle Tv,v\rangle,
 \qquad
 z=Tv- \lambda v.
\]
Then
\[
 | \lambda|\le2\delta^2\|Tx\|,
 \qquad
 \frac{\delta}{2}\|Tx\|
 \le\|z\|\le\|Tv\|\leq2\delta\|Tx\|.
\]
\end{lemma}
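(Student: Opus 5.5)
The plan is a direct computation. Expanding $\lambda$ and $Tv$ in $x,y$ and using the two orthogonality relations $\langle x,y\rangle=0$ and $\langle Tx,y\rangle=0$ will make every estimate follow from the triangle inequality. First, $v$ is a unit vector because $x\perp y$. Expanding gives
\[
 (1+\delta^2)\lambda=\delta^2\langle Tx,x\rangle+\delta\langle Tx,y\rangle+\delta\langle Ty,x\rangle+\langle Ty,y\rangle .
\]
The second term vanishes. Each remaining term is controlled using $\|Ty\|\le\delta^2\|Tx\|$:
\[
 |\lambda|\le\delta^2\|Tx\|+\delta\cdot\delta^2\|Tx\|+\delta^2\|Tx\|\le 2\delta^2\|Tx\|\cdot\tfrac{1+\delta/2}{1}\cdot\tfrac{1}{1+\delta^2}.
\]
Since $\delta\le1/4$, this gives $|\lambda|\le 2\delta^2\|Tx\|$ once I check that $(2+\delta)/(1+\delta^2)\le 2$. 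That inequality holds because $\delta\le 2\delta^2$ fails only for small $\delta$. So I should write it more carefully: the sum is $(2\delta^2+\delta^3)\|Tx\|/(1+\delta^2)$, and $2+\delta\le 2+2\delta^2$ is not always true. I therefore expect to need a sharper treatment of the cross term. The sharper treatment is that $\langle Ty,x\rangle$ is bounded by $\|Ty\|\le\delta^2\|Tx\|$, so that term contributes only $\delta^3\|Tx\|$. This term is the place where the constant $2$ is tight. If the crude bound is slightly off, I will absorb it using $\delta\le1/4$, since $1+\delta/2\le 9/8$. The honest bound is then $\tfrac{9}{4}\delta^2\|Tx\|/(1+\delta^2)\le 2\delta^2\|Tx\|$, because $9/4\le 2(1+\delta^2)$ requires $\delta^2\ge1/8$. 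I will recheck this arithmetic. If it fails, I will instead combine $\langle Tx,x\rangle$ with the other terms via $|\langle Tv,v\rangle|\le\|Tv\|$, or accept a sharper grouping. This constant-checking is the only delicate point.

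For the upper bound, write $\sqrt{1+\delta^2}\,Tv=\delta Tx+Ty$. This gives
\[
 \|Tv\|\le\delta\|Tx\|+\delta^2\|Tx\|\le 2\delta\|Tx\|.
\]
Next, $z=Tv-\langle Tv,v\rangle v$ is the projection of $Tv$ onto $v^\perp$. Hence $\|z\|\le\|Tv\|$ by Pythagoras.

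For the lower bound, I use the fact that $y$ is orthogonal to $Tx$, whereas $v$ has a large $y$-component. Concretely,
\[
 \|z\|\ge\|Tv\|-|\lambda|,\qquad \|Tv\|\ge\frac{\delta\|Tx\|-\delta^2\|Tx\|}{\sqrt{1+\delta^2}}\ge\frac{\delta(1-\delta)}{\sqrt{1+\delta^2}}\|Tx\|.
\]
With $\delta\le1/4$ this gives $\|Tv\|\ge 0.72\,\delta\|Tx\|$. Subtracting $|\lambda|\le2\delta^2\|Tx\|\le\tfrac12\delta\|Tx\|$ is too lossy, since $0.72-0.5<0.5$. I will therefore use orthogonality instead: $\|z\|^2=\|Tv\|^2-|\lambda|^2$. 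Then
\[
 \|z\|^2\ge(0.72^2-0.25)\delta^2\|Tx\|^2\ge\tfrac14\delta^2\|Tx\|^2,
\]
since $0.518-0.25>0.25$. This gives $\|z\|\ge\tfrac{\delta}{2}\|Tx\|$. The main obstacle is purely the bookkeeping of constants under $\delta\le1/4$. The Pythagorean form $\|z\|^2=\|Tv\|^2-|\lambda|^2$ is the key device that makes the lower bound work.
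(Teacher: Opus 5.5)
There is a genuine gap in the first inequality $|\lambda|\le 2\delta^2\|Tx\|$: your plan never establishes it. Bounding the three surviving terms separately gives $|\lambda|\le\frac{2\delta^2+\delta^3}{1+\delta^2}\|Tx\|$. This quantity is at most $2\delta^2\|Tx\|$ only when $\delta\ge\frac12$, so the bound fails on the whole admissible range $0<\delta\le\frac14$. Your absorption attempt confirms the failure rather than repairing it, since $\frac94\le2(1+\delta^2)$ would need $\delta^2\ge\frac18$. The fallback $|\langle Tv,v\rangle|\le\|Tv\|$ only gives $|\lambda|\le2\delta\|Tx\|$, which has the wrong order in $\delta$.

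The loss also affects your lower bound for $\|z\|$, which uses $|\lambda|^2\le\frac14\delta^2\|Tx\|^2$. If you feed the termwise bound into your Pythagorean step instead, then at $\delta=\frac14$ you only obtain $\|z\|^2\ge\bigl(\frac{9}{17}-\frac{81}{289}\bigr)\delta^2\|Tx\|^2=\frac{72}{289}\delta^2\|Tx\|^2$. This is less than $\frac14\delta^2\|Tx\|^2$.

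The missing idea is to keep the two $Ty$-terms together:
\[
\delta\langle Ty,x\rangle+\langle Ty,y\rangle=\langle Ty,\delta x+y\rangle=\sqrt{1+\delta^2}\,\langle Ty,v\rangle ,
\]
and $|\langle Ty,v\rangle|\le\|Ty\|$ because $v$ is a unit vector. Equivalently, Bessel's inequality for the orthonormal pair $x,y$ shows that the two terms cannot both be as large as $\|Ty\|$. This gives
\[
 |\lambda|\le\frac{\delta^2}{1+\delta^2}\|Tx\|+\frac{\|Ty\|}{\sqrt{1+\delta^2}}\le\delta^2\|Tx\|+\delta^2\|Tx\|=2\delta^2\|Tx\| ,
\]
which is exactly how the paper argues. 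With this bound in place, the rest of your proposal is the paper's proof and goes through. That means the triangle-inequality bounds $\frac{\delta(1-\delta)}{\sqrt{1+\delta^2}}\|Tx\|\le\|Tv\|\le2\delta\|Tx\|$, the inequality $\|z\|\le\|Tv\|$ from $z\perp v$, and the identity $\|z\|^2=\|Tv\|^2-|\lambda|^2\ge\bigl(0.72^2-\frac14\bigr)\delta^2\|Tx\|^2\ge\frac14\delta^2\|Tx\|^2$.
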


\begin{proof}
Since $x,y$ are orthonormal, $\|v\|=1$ and
$\langle z,v\rangle=0$. The triangle inequality gives
\[
 \begin{aligned}
 \|Tv\|
 &\ge
 \frac{\delta\|Tx\|-\|Ty\|}{\sqrt{1+\delta^2}}
 \ge
 \frac{\delta(1-\delta)}{\sqrt{1+\delta^2}}\|Tx\|
 \ge\frac{1}{2}\delta\|Tx\|,\\
 \|Tv\|
 &\le
 \frac{\delta\|Tx\|+\|Ty\|}{\sqrt{1+\delta^2}}
 \le
 \frac{\delta(1+\delta)}{\sqrt{1+\delta^2}}\|Tx\|
 \leq2\delta\|Tx\|.
 \end{aligned}
\]
Here we used
\[
 \frac{1-\delta}{\sqrt{1+\delta^2}}
 \ge\frac{1-\delta}{1+\delta}
 \ge1-2\delta\ge\frac{1}{2}.
\]
Since $\langle Tx,y\rangle=0$, we also have
\[
 \begin{aligned}
 |\lambda|
 &=
 \left|
 \frac{\delta^2}{1+\delta^2}\langle Tx,x\rangle
 +\frac{\langle Ty,v\rangle}{\sqrt{1+\delta^2}}
 \right|\\
 &\le
 \frac{\delta^2}{1+\delta^2}\|Tx\|
 +\frac{\|Ty\|}{\sqrt{1+\delta^2}}\\
 &\le2\delta^2\|Tx\|.
 \end{aligned}
\]
Consequently,
\[
 \|z\|^2
 =\|Tv\|^2-|\lambda|^2
 \ge
 \delta^2\left(\frac{9}{17}-4\delta^2\right)\|Tx\|^2
 \ge\frac{\delta^2}{4}\|Tx\|^2.
\]
Then we have
\[
 \frac{\delta}{2}\|Tx\|
 \le\|z\|\le\|Tv\|\leq2\delta\|Tx\|.
\]
\end{proof}

\begin{proposition}\label{prop:form}
Let $T$ be a compact operator of infinite rank, let $0<\theta<1/4$,
and let $(\varepsilon_n)$ be a positive sequence decreasing to zero.
Given a finite-dimensional subspace $\mathcal M\subset\mathcal H$,
there exist a bounded invertible operator $S_0$, an orthogonal
decomposition
\[
 \mathcal H=\mathcal H_0\oplus\bigoplus_{n\ge1}\mathcal H_{1,n},
 \qquad
 1\le\dim\mathcal H_{1,n}\le2,
\]
and an orthonormal sequence
$(f_n)\subset\mathcal H_0\cap\mathcal M^\perp$ such that
\[
 \widetilde T=S_0^{-1}TS_0
 =
 \begin{pmatrix}
 D&E_1&E_2&\cdots\\
 F_1&G_1&0&\cdots\\
 F_2&0&G_2&\cdots\\
 \vdots&\vdots&\vdots&\ddots
 \end{pmatrix}
 \begin{matrix}
   \mathcal H_{0} \\
   \mathcal H_{1,1} \\
   \mathcal H_{1,2} \\
   \vdots
 \end{matrix} \,,
\]
where
\[
 \begin{gathered}
 \widetilde Tf_n,\ \widetilde T^*f_n\in\mathcal H_{1,n},
 \qquad Df_n=D^*f_n=0,\\
 \max\{\|E_n\|,\|F_n\|,\|G_n\|\}\le2\varepsilon_n,
 \qquad
 \max\{\|S_0\|,\|S_0^{-1}\|\}<1+\theta.
 \end{gathered}
\]
Moreover, $\mathcal H_{1,n}\subset\mathcal M^\perp$, and
$S_0$ and $S_0^*$ act identically on $\mathcal M$.

\end{proposition}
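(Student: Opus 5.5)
The plan is to build $(f_n)$ and $\mathcal H_{1,n}$ inductively and to make almost all the required block vanishings exact by imposing finitely many linear orthogonality conditions at each step. The similarity $S_0$ is then needed only for the one condition that is not linear.

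\textbf{Inductive choice of $f_n$.} Suppose $f_1,\dots,f_{n-1}$ and $\mathcal H_{1,1},\dots,\mathcal H_{1,n-1}$ are already chosen, all finite-dimensional. Let $\mathcal N_{n-1}$ be the finite-dimensional space spanned by $\mathcal M$, the $f_k$ and the $\mathcal H_{1,k}$ for $k<n$. Let $\mathcal F_n$ be the finite-codimensional subspace of vectors orthogonal to
\[
\mathcal N_{n-1},\quad T\mathcal N_{n-1},\quad T^*\mathcal N_{n-1},\quad T^2\mathcal N_{n-1},\quad (T^*)^2\mathcal N_{n-1},\quad TT^*\mathcal N_{n-1},\quad T^*T\mathcal N_{n-1}.
\]
I pick a unit vector $f_n\in\mathcal F_n$ and set $\mathcal H_{1,n}=\operatorname{span}\{Tf_n,T^*f_n\}$. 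For $u\in\mathcal N_{n-1}$, the pairings $\langle f_n,u\rangle$, $\langle Tf_n,u\rangle=\langle f_n,T^*u\rangle$, $\langle T(Tf_n),u\rangle=\langle f_n,(T^*)^2u\rangle$, $\langle T(T^*f_n),u\rangle=\langle f_n,TT^*u\rangle$, and the analogous ones with $T^*$ then all vanish. Hence:
\begin{itemize}
\item $\mathcal H_{1,n}$ is orthogonal to $\mathcal M$, to every earlier $f_k$, and to every earlier $\mathcal H_{1,k}$;
\item $f_n$ is orthogonal to every earlier $\mathcal H_{1,k}$;
\item the cross-compressions of $T$ between $\mathcal H_{1,n}$ and $\mathcal H_{1,k}$ are zero in both directions;
\item $\widetilde Tf_k$ and $\widetilde T^*f_k$ receive no component in $\mathcal H_{1,n}$ for $k<n$.
\end{itemize}

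Since $T$ has infinite rank, $T|_{\mathcal F_n}\neq0$, so I can take $Tf_n\neq0$ and $1\le\dim\mathcal H_{1,n}\le2$. Since $T$ is compact, $\|T|_{\mathcal F}\|$ and $\|T^*|_{\mathcal F}\|$ tend to $0$ along finite-codimensional $\mathcal F$ that exhaust $\mathcal H$. So, after shrinking $\mathcal F_n$ further by finitely many conditions, I get $\|Tf_n\|,\|T^*f_n\|\le\varepsilon_n$. Define $\mathcal H_0$ as the orthogonal complement of $\bigoplus_n\mathcal H_{1,n}$. The vanishing cross terms give the block-diagonal shape of the lower-right corner. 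Because $Tf_n,T^*f_n\in\mathcal H_{1,n}$, the $f_n$-columns and $f_n$-rows of $D$ vanish. The bounds on $E_n$, $F_n$, $G_n$ should then follow from $\|T|_{\mathcal H_{1,n}}\|$ being small. To get this I also require $\mathcal F_n$ to be deep enough that $\|T\xi\|$ is small for $\xi\in\mathcal H_{1,n}$. This is more delicate, since $Tf_n$ need not lie in $\mathcal F_n$. I expect to handle it by first choosing a finite-codimensional $\mathcal F_n'$ on which $T$, $T^*$, $T^2$, $TT^*$, $T^*T$, $(T^*)^2$ all have norm at most $\varepsilon_n$; such a subspace exists because each of these operators is compact.

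\textbf{Main obstacle: the quadratic condition.} The one condition not handled by linear constraints is $f_n\perp\mathcal H_{1,n}$, that is $\langle Tf_n,f_n\rangle=\langle T^*f_n,f_n\rangle=0$, which is needed for $f_n\in\mathcal H_0$. This is quadratic in $f_n$. I would handle it with Lemma~\ref{lem:vector-estimate}. Take $x\in\mathcal F_n$ with $Tx\neq0$. Take a second unit vector $y\in\mathcal F_n$ deep enough that $y\perp x$, $\langle Tx,y\rangle=0$ and $\|Ty\|\le\delta_n^2\|Tx\|$. Set $f_n=v$ as in the lemma, so that $Tv=\lambda v+z$ with $z\perp v$ and $|\lambda|\le2\delta_n^2\|Tx\|\ll\|z\|$. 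The defect $\lambda v$, together with its analogue for $T^*$, is then removed by a rank-one shear $I+\mu_n\,v\otimes w_n^*$, with $\mu_n$ of size $O(\delta_n)$ and $w_n$ taken inside $\mathcal H_{1,n}$ and $\mathcal M^\perp$. This conjugation moves $\lambda v$ into the $\mathcal H_{1,n}$-component while leaving $\mathcal M$ fixed, which gives the statement that $S_0$ and $S_0^*$ act identically on $\mathcal M$. The key point to verify is that each shear disturbs only the $n$-th blocks, so the exact vanishings from earlier steps survive.

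\textbf{Convergence of $S_0$.} I set $S_0=\prod_n(I+\mu_n v_n\otimes w_n^*)$. Choosing $\delta_n$ so that $\sum_n|\mu_n|<\theta/2$ makes the product converge, with $\|S_0\|,\|S_0^{-1}\|<1+\theta$. The $\varepsilon_n$-bounds then absorb the factor $(1+\theta)^2<2$, which explains the constant $2\varepsilon_n$ in the statement.
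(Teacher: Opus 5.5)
Your strategy matches the paper's own proof. It uses finitely many linear orthogonality conditions per step, Lemma~\ref{lem:vector-estimate} to make $\lambda_n=\langle Tf_n,f_n\rangle$ small relative to $\|z_n\|$, and a local rank-one nilpotent shear; the paper takes $N_n=-\lambda_n\|z_n\|^{-2}f_n\otimes z_n^*$.

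\textbf{The gap.} The step giving $\max\{\|E_n\|,\|F_n\|,\|G_n\|\}\le2\varepsilon_n$ does not work as you propose it. These are operator norms on $\mathcal H_{1,n}$, so you need the \emph{relative} bounds $\|T\xi\|,\|T^*\xi\|\le\varepsilon_n\|\xi\|$ for every $\xi\in\mathcal H_{1,n}$. Smallness of $T,T^*,T^2,TT^*,T^*T,(T^*)^2$ on a finite-codimensional $\mathcal F_n'\ni f_n$ only gives absolute bounds such as $\|T^2f_n\|\le\varepsilon_n$. Since $\|Tf_n\|\le\varepsilon_n$ as well, the ratio $\|T(Tf_n)\|/\|Tf_n\|$ is not controlled.

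For a concrete failure, let $T=\sum_{k\ge1}s_k\,e_1\otimes e_{2k}^*+\sum_{k\ge0}t_k\,e_{2k+1}\otimes e_{2k+1}^*$. Here $t_0=1$, $(s_k)\in\ell^2$ and $t_k\to0$, with all $s_k,t_k$ positive. Take $\mathcal M=\{0\}$, $n=1$ and $\varepsilon_1<1/2$.
\begin{itemize}
\item All six operators are small on $\mathcal F_1'=\overline{\operatorname{span}}\{e_j:j\ge J\}$ for large $J$.
\item Take $x=e_{2K}$ and $y=e_{2K'}$ with $2K,2K'\ge J$ and $s_{K'}\le\delta^2s_K$. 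These satisfy every condition you impose.
\item They give $Tf_1\in\mathbb Ce_1$, $T^*f_1=0$ and $\lambda=0$, so no shear occurs.
\item Hence $\mathcal H_{1,1}=\mathbb Ce_1$ and $\|G_1\|=\langle Te_1,e_1\rangle=1>2\varepsilon_1$.
\end{itemize}

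\textbf{The remedy.} It uses the mechanism you already have. Choose a finite-rank $R_n$ with $\|T-R_n\|\le\varepsilon_n$ and set $\mathcal K_n=\operatorname{ran}R_n+\operatorname{ran}R_n^*$. Then $\|Th\|,\|T^*h\|\le\varepsilon_n\|h\|$ for all $h\in\mathcal K_n^\perp$. Add $\mathcal K_n$ to the finite-dimensional space that the new local block must be orthogonal to. Because $\langle Tf_n,h\rangle=\langle f_n,T^*h\rangle$ and so on, this imposes only finitely many linear conditions on $f_n$. It forces the entire local space into $\mathcal K_n^\perp$, not just $f_n$. This is what the paper does by placing $\mathcal F_n=\mathcal M+\operatorname{ran}R_n+\operatorname{ran}R_n^*$ inside $\mathcal E_n$.

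\textbf{A second, fixable omission.} After the shear, $\widetilde T^*f_n$ contains the term $T^*w_n$ (for instance $T^*Tf_n$), which is not in $\operatorname{span}\{Tf_n,T^*f_n\}$. So $\mathcal H_{1,n}$ must be defined after the shear as $\operatorname{span}\{\widetilde Tf_n,\widetilde T^*f_n\}$. The local space must also be enlarged, as in the paper's $\mathcal V_n=\operatorname{span}\{f_n,Tf_n,T^*f_n,T^*Tf_n\}\perp\mathcal E_n$. Your list of conditions does not cover the cross terms of this extra vector; for example, $\langle TT^*Tf_n,u\rangle=\langle f_n,T^*TT^*u\rangle$ is not among them.

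\textbf{A minor point.} The shears act on mutually orthogonal reducing subspaces, so $S_0=I-\bigoplus_nN_n$. Only $\sup_n\|N_n\|\le4\delta$ matters, and no summability of the $\mu_n$ is needed.
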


\begin{proof}
Fix $\delta=\min\{1/4,\theta/4\}$. Since $T$ is compact, the image
of the unit ball is totally bounded. For each $n$, there is a
finite-dimensional subspace $\mathcal Y_n$
 such that
\[
\operatorname{dist}(Th,\mathcal Y_n)
 \le\varepsilon_n,\quad {\rm for\,\,all}\,\, \|h\|\le1.
\]
Set $R_n=P_{\mathcal Y_n}T$.
Then $R_n$ has finite rank and
$\|T-R_n\|\le\varepsilon_n$.

Let
\[
 \mathcal F_n
 =
 \mathcal M+\operatorname{ran}R_n+\operatorname{ran}R_n^*.
\]
For any $h\in\mathcal F_n^\perp$, we have $R_nh=R_n^*h=0$.
Consequently,
\[
\|Th\|\le\varepsilon_n\|h\|,
 \qquad \|T^*h\|\le\varepsilon_n\|h\|.
\]

Now we construct unit vectors $f_n$ and finite-dimensional
subspaces $\mathcal V_n$ inductively.
For $n=1$, set
\[
 \mathcal E_1=\mathcal F_1,
 \qquad
 \mathcal L_1
 =\mathcal E_1+T\mathcal E_1+T^*\mathcal E_1
                 +T^*T\mathcal E_1.
\]
Since $\mathcal L_1$ is finite-dimensional and $T$ has
infinite rank, there is a unit vector
$x_1\in\mathcal L_1^\perp$ with $Tx_1\ne0$.
Otherwise $T=TP_{\mathcal L_1}$ would have finite rank.

Since every orthonormal sequence in $\bigl(\mathcal L_1+ \operatorname{span}\{x_1,Tx_1\}\bigr)^\perp$ converges weakly to zero, the compactness of $T$ guarantees that its image sequence converges to zero in norm. Consequently, one can find a unit vector $y_1\in\bigl(\mathcal L_1+
       \operatorname{span}\{x_1,Tx_1\}\bigr)^\perp$  such that
\[
 \|Ty_1\|\le\delta^2\|Tx_1\|.
\]
Now we set
\[
 f_1=\frac{\delta x_1+y_1}{\sqrt{1+\delta^2}},
 \qquad
 \mathcal V_1
 =\operatorname{span}\{f_1,Tf_1,T^*f_1,T^*Tf_1\}.
\]

For $n\ge2$, suppose that
$x_j,y_j,f_j,\mathcal V_j$ have been constructed
for $1\le j<n$. Let
\[
 \begin{aligned}
 \mathcal E_n
 &=\mathcal F_n+
   \sum_{j=1}^{n-1}
   \bigl(\mathcal V_j+T\mathcal V_j+T^*\mathcal V_j\bigr),\\
 \mathcal L_n
 &=\mathcal E_n+T\mathcal E_n+T^*\mathcal E_n
                   +T^*T\mathcal E_n.
 \end{aligned}
\]
Both spaces are finite-dimensional. Using the compactness of $T$ again, there are unit vectors $x_n,y_n$ satisfying
\[
 \begin{gathered}
 x_n\in \mathcal L_n^\perp,\qquad Tx_n\ne0,\\
 y_n\in \big(\mathcal L_n+
             \operatorname{span}\{x_n,Tx_n\}\big)^\perp,
 \quad
 \|Ty_n\|\le\delta^2\|Tx_n\|.
 \end{gathered}
\]
Set
\[
 f_n=\frac{\delta x_n+y_n}{\sqrt{1+\delta^2}},
 \qquad
 \mathcal V_n
 =\operatorname{span}\{f_n,Tf_n,T^*f_n,T^*Tf_n\}.
\]
Then  we have
\[
 \|f_n\|=1,\quad f_n\in\mathcal L_n^\perp,
 \quad \dim\mathcal V_n\le4.
\]
This completes the inductive construction.

From the construction, we have
\[
 \langle f_n,h\rangle=0,\qquad
 \langle Tf_n,h\rangle
   =\langle f_n,T^*h\rangle=0,
\]
\[
 \langle T^*f_n,h\rangle
   =\langle f_n,Th\rangle=0,\qquad
 \langle T^*Tf_n,h\rangle
 =\langle f_n,T^*Th\rangle=0,
\]
for every $h\in\mathcal E_n$. Hence $\mathcal V_n\subset\mathcal E_n^\perp$.
In particular,
\[\mathcal V_n\subset\mathcal F_n^\perp,\qquad
\mathcal V_n\subset \big(\sum_{j=1}^{n-1}
   \left(\mathcal V_j+T\mathcal V_j+T^*\mathcal V_j\right)\big)^\perp.
\]
It follows that the spaces $\mathcal V_n$ are mutually
orthogonal and
\[
 P_{\mathcal V_m}T|_{\mathcal V_n}=0,
 \qquad m,n\ge1,\ m\ne n.
\]
Moreover,
\[
 \max\{\|T|_{\mathcal V_n}\|,
          \|T^*|_{\mathcal V_n}\|\}\leq \max\{\|T|_{\mathcal F_n^\perp}\|,
          \|T^*|_{\mathcal F_n^\perp}\|\}\leq\varepsilon_n.
\]
Denote
$
 \mathcal V_0=
 \left(\bigoplus_{n\ge1}\mathcal V_n\right)^\perp$ and write $T_{ij}=P_{\mathcal V_i}T|_{\mathcal V_j}$.
Relative to
$
 \mathcal H=\mathcal V_0\oplus\mathcal V_1
                         \oplus\mathcal V_2\oplus\cdots,
$
$T$ has the form
\[
 T=
 \begin{pmatrix}
 T_{00}&T_{01}&T_{02}&\cdots\\
 T_{10}&T_{11}&0&\cdots\\
 T_{20}&0&T_{22}&\cdots\\
 \vdots&\vdots&\vdots&\ddots
 \end{pmatrix}\begin{array}{c}
                \mathcal V_0 \\
                \mathcal V_1 \\
                \mathcal V_2 \\
                \vdots
              \end{array}.
\]

We next construct a small nilpotent perturbation on each
$\mathcal V_n$. For each $n$, set
\[
\lambda_n=\langle Tf_n,f_n\rangle,
 \qquad
 z_n=Tf_n-\lambda_nf_n.
\]
Applying Lemma~\ref{lem:vector-estimate} to $x_n,y_n$
gives
\[
 |\lambda_n|\le2\delta^2\|Tx_n\|,
 \qquad
 \|z_n\|\ge\frac{\delta}{2}\|Tx_n\|>0.
\]

Define
\[
 N_n=-\frac{\lambda_n}{\|z_n\|^2}\,f_n\otimes z_n^*.
\]
For every $u\in\mathcal H$, the rank-one formula gives
\[
 N_nu=-\frac{\lambda_n}{\|z_n\|^2}\langle u,z_n\rangle f_n,
 \qquad
 N_n^*u=-\frac{\overline{\lambda_n}}{\|z_n\|^2}
              \langle u,f_n\rangle z_n.
\]
It follows that
\[
 \operatorname{ran}N_n
 \subseteq\mathbb C f_n\subseteq\mathcal V_n,
 \qquad
 \operatorname{ran}N_n^*
 \subseteq\mathbb C z_n\subseteq\mathcal V_n
\]
and
\[
\|N_n\|=\frac{|\lambda_n|}{\|z_n\|}\le4\delta.
\]
Moreover,
\[
 N_nf_n=0,\qquad
 N_nTf_n=-\lambda_nf_n,\qquad
 N_n^2=0.
\]
Hence $(I-N_n)^{-1}=I+N_n$ and
\[
 (I+N_n)T(I-N_n)f_n
 =Tf_n-\lambda_nf_n=z_n.
\]

Denote
\[
 N=0_{\mathcal V_0}\oplus\bigoplus_{n\ge1}N_n,
 \qquad
 S_0=I-N.
\]
Then $N^2=0$, $S_0^{-1}=I+N$ and
\[
 \|N\|=\sup_n\|N_n\|\le4\delta,
\qquad \max\{\|S_0\|,\|S_0^{-1}\|\}
 <1+4\delta
 \leq 1+\theta.
\]
Since every $\mathcal V_n$ is orthogonal to $\mathcal M$,
both $N$ and $N^*$ vanish on $\mathcal M$.
Thus $S_0$ and $S_0^*$ act identically on $\mathcal M$.

Let \[
\widetilde T=S_0^{-1}TS_0.
\]
Since $N_n=P_{\mathcal V_n}N_nP_{\mathcal V_n}$,
$\mathcal V_n$ reduces $N$, $S_0$, and $S_0^{-1}$.
Thus, for any $m,n\geq 1$, with $T_{ij}=P_{\mathcal V_i}T|_{\mathcal V_j}$,
block multiplication gives
\[
 \begin{aligned}
 P_{\mathcal V_0}\widetilde T|_{\mathcal V_0}
 &=T_{00},\\
 P_{\mathcal V_0}\widetilde T|_{\mathcal V_n}
 &=T_{0n}(I-N_n),\\
 P_{\mathcal V_n}\widetilde T|_{\mathcal V_0}
 &=(I+N_n)T_{n0},\\
 P_{\mathcal V_m}\widetilde T|_{\mathcal V_n}
 &=(I+N_m)T_{mn}(I-N_n).
 \end{aligned}
\]
Consequently,
$\widetilde T$ has the form
\[
 \begin{pmatrix}
 T_{00}
   &T_{01}(I-N_1)
   &T_{02}(I-N_2)
   &\cdots\\
 (I+N_1)T_{10}
   &(I+N_1)T_{11}(I-N_1)
   &0
   &\cdots\\
 (I+N_2)T_{20}
   &0
   &(I+N_2)T_{22}(I-N_2)
   &\cdots\\
 \vdots&\vdots&\vdots&\ddots
 \end{pmatrix}\begin{array}{c}
                \mathcal V_0 \\
                \mathcal V_1 \\
                \mathcal V_2 \\
                \vdots
              \end{array}.
\]

Recall that
\[
\mathcal V_n
 =\operatorname{span}\{f_n,Tf_n,T^*f_n,T^*Tf_n\}.
\]
We have
\begin{align*}
   \widetilde Tf_n
 =&(I+N_n)Tf_n
 =Tf_n-\lambda_nf_n=z_n\in \mathcal V_n,  \\
  T^*z_n= & T^*Tf_n-\lambda_nT^*f_n\in\mathcal V_n.
\end{align*}
Note that
\[
 T^*(I+N^*)f_n
 =T^*f_n-\frac{\overline{\lambda_n}}{\|z_n\|^2}T^*z_n\in\mathcal V_n.
\]
Since $I-N^*$ preserves $\mathcal V_n$, it follows that
\[
 \widetilde T^*f_n
 =(I-N^*)T^*(I+N^*)f_n\in\mathcal V_n.
\]
We also have
\[
 \langle\widetilde T^*f_n,f_n\rangle =\langle f_n,\widetilde Tf_n\rangle
 ={\langle f_n,z_n\rangle}=0.
\]

For every $n\ge1$, set
\[
 \mathcal H_{1,n}
 =\operatorname{span}\{\widetilde Tf_n,\widetilde T^*f_n\}
 , \qquad
 \mathcal H_0=\big(\bigoplus_{n\ge1}\mathcal H_{1,n}\big)^\perp.
\]
Then
\[
1\le\dim\mathcal H_{1,n}\le2,\qquad \mathcal H_{1,n}\subset\mathcal M^\perp \cap\mathcal V_n\cap \{f_n\}^\perp,
\]
 and
$(f_n)$ is an orthonormal sequence in
$\mathcal H_0\cap\mathcal M^\perp$.

Relative to $\mathcal H_0\oplus\mathcal H_{1,1}
 \oplus\mathcal H_{1,2}\oplus\cdots$, the operator $\widetilde T$ has the block form
\[
 \widetilde T=
 \begin{pmatrix}
 D&E_1&E_2&\cdots\\
 F_1&G_1&0&\cdots\\
 F_2&0&G_2&\cdots\\
 \vdots&\vdots&\vdots&\ddots
 \end{pmatrix} \begin{matrix}
   \mathcal H_{0} \\
   \mathcal H_{1,1} \\
   \mathcal H_{1,2} \\
   \vdots
 \end{matrix}.
\]
Moreover
\[
 Df_n=P_{\mathcal H_0}\widetilde Tf_n=0,
 \qquad
 D^*f_n=P_{\mathcal H_0}\widetilde T^*f_n=0.
\]

Since $S_0,S_0^{-1}$
and their adjoints preserve $\mathcal V_n$,
\[
 \max\left\{
 \|\widetilde T|_{\mathcal V_n}\|,
 \|\widetilde T^*|_{\mathcal V_n}\|
 \right\}
 \le
 \|S_0\|\|S_0^{-1}\|\varepsilon_n
 <2\varepsilon_n.
\]
Here, we use that $(1+\theta)^2<2$.
Consequently,
\[
 \begin{aligned}
 \|E_n\|,\ \|G_n\|
 &\le\|\widetilde T|_{\mathcal V_n}\|
 <2\varepsilon_n,\\
 \|F_n\|=\|F_n^*\|
 &\le\|\widetilde T^*|_{\mathcal V_n}\|
 <2\varepsilon_n.
 \end{aligned}
\]
\end{proof}
\begin{remark}\label{rem:tailchoice}
Fix a construction in Proposition~\ref{prop:form} and an integer $m\ge1$.
If $(\varepsilon_n')$ is positive and decreasing, with
\[
 \varepsilon_j'=\varepsilon_j\quad(j\le m),\qquad
 0<\varepsilon_n'\le\varepsilon_n\quad(n>m),
\]
then the proposition can be applied with the same $\theta,\mathcal M$
and the bounds $\varepsilon_n'$ so that
\[
 (f_j',\mathcal H_{1,j}',G_j')=(f_j,\mathcal H_{1,j},G_j)
 \qquad(1\le j\le m).
\]
Indeed, keep $x_j,y_j$ for $j\le m$ and repeat the induction for
$n>m$. Each later step has only finitely many orthogonality conditions,
and the compactness argument permits the smaller bound.

For $j\le m$, the vectors $f_j,z_j$, the space $\mathcal V_j$, and
$N_j$ remain fixed. The formulas
\[
 \begin{aligned}
 \widetilde Tf_j&=(I+N_j)Tf_j,\\
 \widetilde T^*f_j&=(I-N_j^*)T^*(I+N_j^*)f_j,\\
 P_{\mathcal V_j}\widetilde T|_{\mathcal V_j}
 &=(I+N_j)\bigl(P_{\mathcal V_j}T|_{\mathcal V_j}\bigr)(I-N_j)
 \end{aligned}
\]
therefore leave $\mathcal H_{1,j}$ and $G_j$ unchanged. All bounds
in the proposition hold with $\varepsilon_n'$.
\end{remark}

We combine Proposition~\ref{prop:form} with a rank-one
trace adjustment, as in \cite[Lemma 3.2]{Liu}.

\begin{lemma}\label{lem:shear}
Let $T$ be a compact operator of infinite rank, let $\gamma>0$,
and let $(\varepsilon_n)$ be a positive sequence decreasing to zero. There are an invertible operator
$S$, a unit vector $v$, an orthogonal decomposition
\[
 \mathcal H=\mathcal H_0\oplus\mathcal H_1,
 \qquad \mathcal H_1=\mathbb Cv\oplus\bigoplus_{n\ge1}\mathcal H_{1,n},
 \qquad  1\le\dim\mathcal H_{1,n}\le2,
\]
and an orthonormal sequence $(f_n)\subset\mathcal H_0$ such that
$S^{-1}TS$ has the form
\[
S^{-1}TS=
 \begin{pmatrix}
 \widehat D&E_v&E_1&E_2&\cdots\\
 F_v&\lambda&0&0&\cdots\\
 F_1&0&G_1&0&\cdots\\
 F_2&0&0&G_2&\cdots\\
 \vdots&\vdots&\vdots&\vdots&\ddots
 \end{pmatrix}
 \begin{matrix}
 \mathcal H_0\\\mathbb Cv\\\mathcal H_{1,1}\\\mathcal H_{1,2}\\\vdots
 \end{matrix},
\]
where $\lambda=-\sum_{n=1}^{\infty}\tr G_n$ and $\sum_{n=1}^{\infty}\|G_n\|_1<\infty$.
The blocks satisfy
\[
 \|G_n\|_1\le4\varepsilon_n,\qquad
 \|E_n\|\le2\varepsilon_n,\qquad \|F_n\|\le2\varepsilon_n,
\]
\[
\|S\|\|S^{-1}\|<1+\gamma,
 \qquad \|E_v\|<\gamma\|T\|,
 \qquad \|F_v\|<\gamma\|T\|,
\]
and
\[
(S^{-1}TS)f_n,\ (S^{-1}TS)^*f_n\in\mathcal H_{1,n}.
\]
In particular, $\widehat D$ has an infinite-dimensional reducing null subspace.
\end{lemma}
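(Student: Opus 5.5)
The plan is to take $\widetilde T$ from Proposition~\ref{prop:form}, sacrifice the first summand $\mathcal H_{1,1}$ by merging it into $\mathcal H_0$, use $v:=f_1$ as the new one-dimensional summand, and then perform one rank-one shear that puts the required value $\lambda$ in the $(v,v)$ entry.

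\textbf{Step 1: choosing the tail.} Apply Proposition~\ref{prop:form} to $T$ with $\mathcal M=\{0\}$, some $\theta<1/4$ to be fixed below, and the sequence $(\varepsilon_n)$ (after shifting indices by one, see Step 5). This gives $S_0$, the decomposition $\mathcal H_0\oplus\bigoplus_n\mathcal H_{1,n}$ and the vectors $(f_n)$.

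Since $\dim\mathcal H_{1,n}\le2$, we get
\[
\|G_n\|_1\le 2\|G_n\|\le4\varepsilon_n .
\]
Fix $v:=f_1$. Then $\widetilde Tv=z_1\ne0$ by Lemma~\ref{lem:vector-estimate}, and $\widetilde T v,\widetilde T^*v\in\mathcal H_{1,1}$.

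Now freeze the first stage and use Remark~\ref{rem:tailchoice} to replace $\varepsilon_n$ for $n\ge2$ by a smaller decreasing sequence $\varepsilon_n'$. We choose it so that $\sum_{n\ge2}\varepsilon_n'<\infty$ and so that $\sum_{n\ge2}4\varepsilon_n'$ is as small as we like compared with $\|z_1\|$. The data $f_1$, $\mathcal H_{1,1}$, $z_1$ and $G_1$ do not change under this replacement.

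\textbf{Step 2: regrouping.} Set
\[
\mathcal H_0^{\rm new}=(\mathcal H_0\ominus\mathbb C v)\oplus\mathcal H_{1,1},
\qquad
\mathcal H_1=\mathbb C v\oplus\bigoplus_{n\ge2}\mathcal H_{1,n}.
\]
The following facts hold for $\widetilde T$ with respect to this decomposition.
\begin{itemize}
\item The $(v,v)$ entry is $\langle \widetilde Tv,v\rangle=\langle z_1,f_1\rangle=0$.
\item For $n\ge2$ the blocks between $v$ and $\mathcal H_{1,n}$ vanish. Indeed, $\langle\widetilde Th,v\rangle=\langle h,\widetilde T^*v\rangle=0$ for $h\in\mathcal H_{1,n}$, because $\widetilde T^*v\in\mathcal H_{1,1}\perp\mathcal H_{1,n}$; the other block vanishes similarly.
\item The vectors $f_n$, $n\ge2$, lie in $\mathcal H_0^{\rm new}$. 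They remain in $\ker\widehat D\cap\ker\widehat D^*$, since $\widetilde Tf_n,\widetilde T^*f_n\in\mathcal H_{1,n}$, which is orthogonal to $\mathcal H_0^{\rm new}$. So these $f_n$ span an infinite-dimensional reducing null subspace of the new corner.
\end{itemize}

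\textbf{Step 3: the rank-one shear.} Let $\lambda=-\sum_{n\ge2}\operatorname{Tr}G_n$, so $|\lambda|\le\sum_{n\ge2}4\varepsilon_n'$. For $x\in\mathcal H_0^{\rm new}$ (so $x\perp v$), conjugate by
\[
(I-x\otimes v^*)^{-1}=I+x\otimes v^* .
\]
Because $(x\otimes v^*)^2=0$, a direct computation gives
\[
\bigl\langle (I+x\otimes v^*)\widetilde T(I-x\otimes v^*)v,v\bigr\rangle=-\langle x,\widetilde T^*v\rangle .
\]
If $\widetilde T^*v\ne0$, take
\[
x=-\bar\lambda\,\widetilde T^*v/\|\widetilde T^*v\|^2\in\mathcal H_{1,1}\subset\mathcal H_0^{\rm new}.
\]
If $\widetilde T^*v=0$, use the mirror shear $I\pm v\otimes x^*$ together with $\widetilde Tv=z_1\ne0$ instead.

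The new entries differ from the old ones only in the row and column of $v$ and in $\mathcal H_0^{\rm new}$-blocks. The blocks $G_n,E_n,F_n$ for $n\ge2$ and the zeros between $v$ and $\mathcal H_{1,n}$ are unchanged, because $\langle\widetilde Th,v\rangle=0$ for $h\in\mathcal H_{1,n}$. For the same reason the membership relations for $f_n$ survive.

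\textbf{Step 4: norm control (the main obstacle).} Let $S=S_0(I-x\otimes v^*)$. Everything depends on making $\|x\|$ small, and
\[
\|x\|=\frac{|\lambda|}{\|\widetilde T^*v\|}\quad\text{or}\quad \frac{|\lambda|}{\|z_1\|}
\]
in the mirror case. This is exactly why the tail is chosen after $f_1$ is fixed, via Remark~\ref{rem:tailchoice}: with $\sum_{n\ge2}4\varepsilon_n'$ small enough, $\|x\|$ becomes arbitrarily small. Then $\|S\|\|S^{-1}\|<(1+\theta)^2(1+\|x\|)^2<1+\gamma$ once $\theta$ and $\|x\|$ are small.

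The bounds $\|E_v\|,\|F_v\|<\gamma\|T\|$ need a separate argument. The old $v$-row and $v$-column are $\widetilde T^*v$ and $\widetilde Tv$, and these are small relative to $\|T\|$ once $\varepsilon_1$ is chosen small. Indeed, $\|T|_{\mathcal V_1}\|\le\varepsilon_1$, while the shear adds terms of size $O(\|x\|\|T\|)$. If needed, we start with $\varepsilon_1\le\gamma\|T\|/4$; this is harmless because we relabel in Step 5.

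\textbf{Step 5: bookkeeping.} Finally, relabel $\mathcal H_{1,n+1}\to\mathcal H_{1,n}$. Applying Proposition~\ref{prop:form} to the shifted sequence $(\min\{\varepsilon_{n-1},\cdot\})$ from the start ensures that the stated bounds hold with the original $\varepsilon_n$. These bounds are $\|G_n\|_1\le4\varepsilon_n$ and $\|E_n\|,\|F_n\|\le2\varepsilon_n$, and also $\sum_n\|G_n\|_1<\infty$.
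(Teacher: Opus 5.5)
Your construction is correct, up to two small slips noted at the end, but it takes a genuinely different route from the paper.

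The paper builds $v$ \emph{before} invoking Proposition~\ref{prop:form}. It takes $v=(\delta x+y)/\sqrt{1+\delta^2}$ with $\|T^*x\|=\|T\|$ and $Ty,T^*y$ tiny, and protects $\mathcal M=\operatorname{span}\{u,v,Tu,Tv\}$ so that $S_0$ leaves these vectors alone. It then shears with $R=-\frac{\tau}{\|z\|}\,u\otimes v^*$, where $z=T^*v-\overline{\lambda_0}v$ and $u=z/\|z\|$. Because $\|z\|\ge\delta\|T\|/2$ is known in advance, the tail bounds $a_n=\min\{\varepsilon_n,\delta^2\|T\|/2^{n+2}\}$ are fixed once. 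This gives $|\tau|\le3\delta^2\|T\|$ and $\|R\|<6\delta=\theta$ in a single pass with explicit constants. The price is the auxiliary vector construction, the protected subspace, and having to correct a generally nonzero $\lambda_0=\langle Tv,v\rangle$.

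You instead recycle the first local stage of Proposition~\ref{prop:form}. With $v=f_1$ you get $\langle\widetilde Tv,v\rangle=0$ and $\widetilde Tv,\widetilde T^*v\in\mathcal H_{1,1}$ for free, so absorbing $\mathcal H_{1,1}$ into $\mathcal H_0$ decouples $v$ from the tail. The blocks $E_v,F_v$ are small because $\|\widetilde Tf_1\|=\|F_1f_1\|$ and $\|\widetilde T^*f_1\|=\|E_1^*f_1\|$ are at most $2\varepsilon_1$, up to shear terms of size $O(\|x\|\,\|T\|)$. Since $z_1$ is only known to be nonzero, you must shrink the tail a posteriori through Remark~\ref{rem:tailchoice} and then re-index. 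The resulting constants are not explicit, but the lemma does not ask for them. Moreover, the bound $|\lambda|\le4\sum_{n\ge2}\varepsilon_n'$ only improves under further tail changes, so your version would also support Remark~\ref{rem:sheartail}.

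Two slips need fixing.
\begin{enumerate}
\item Inner products are linear in the first variable, so $x=-\bar\lambda\,\widetilde T^*v/\|\widetilde T^*v\|^2$ produces the diagonal entry $\bar\lambda$. Take $x=-\lambda\,\widetilde T^*v/\|\widetilde T^*v\|^2$ instead.
\item Step~1 calibrates the tail against $\|z_1\|$, while the main case of Step~3 divides by $\|\widetilde T^*v\|$, which may be much smaller. The cleanest fix is to drop the case split and always use $S_1=I-v\otimes x^*$ with $x=\bar\lambda z_1/\|z_1\|^2\in\mathcal H_{1,1}$. Then the $(v,v)$-entry becomes $\langle z_1,x\rangle=\lambda$ and $\|x\|=|\lambda|/\|z_1\|$. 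The checks that $E_n,F_n,G_n$ $(n\ge2)$, the zeros between $v$ and $\mathcal H_{1,n}$, and the relations for $f_n$ survive never use $\widetilde T^*v=0$, so they go through unchanged.
\end{enumerate}
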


\begin{proof}
Choose
\[
\theta=\min\left\{
 \frac{1}{8},\frac{\gamma}{16}\right\},\qquad
 \delta=\frac{\theta}{6},
 \quad
 a_n=\min\{\varepsilon_n, \frac{\delta^2}{2^{n+2}}\|T\|\}.
\]

Since $T^*$ is nonzero and compact,
there is a unit vector $x$ with $\|T^*x\|=\|T\|$.
Choose a unit vector
$
 y\in
 \bigl(\operatorname{span}\{x,Tx,T^*x\}\bigr)^\perp
$ such that
\[
 \max\{\|Ty\|,\|T^*y\|\}\le\delta^2\|T\|.
\]

Let
\[
 v=\frac{\delta x+y}{\sqrt{1+\delta^2}},
 \qquad
\lambda_0=\langle Tv,v\rangle,
 \qquad
 z= T^*v-\overline{\lambda_0} v.
\]
Apply Lemma~\ref{lem:vector-estimate} to $T^*$.
Since $\|T^*x\|=\|T^*\|=\|T\|$ and
$\langle T^*v,v\rangle=\overline{\lambda_0}$, it gives
\[
 |\lambda_0|\le2\delta^2\|T\|,
 \qquad
 2\delta\|T\|\geq\|z\|\ge\frac{\delta}{2}\|T\|,
\qquad
\|Tv\|,\|T^*v\|<2\delta\|T\|.
\]
Set
$
 u:=\frac{z}{\|z\|}.
$
Then $u$ is a unit vector with  $ \langle u,v\rangle=0$, and
$
 \langle Tu,v\rangle=\|z\|.
$

Apply Proposition~\ref{prop:form} with the similarity parameter
$\theta$,  the sequence $a_n$, and
\[
 \mathcal M=\operatorname{span}\{u,v,Tu,Tv\}.
\]
Let $S_0$, $(f_n)$, and $\mathcal H_{1,n}$ be the
resulting operator, vectors, and spaces, respectively. Set
\[
 \mathcal H_1=\mathbb Cv\oplus
             \bigoplus_{n\ge1}\mathcal H_{1,n},
 \qquad
 \mathcal H_0=\mathcal H_1^\perp,\qquad\widetilde T=S_0^{-1}TS_0.
\]
Since $S_0$, $S_0^*$, and their inverses act as the identity on
$\mathcal M$, we have $\widetilde Tu=S_0^{-1}Tu=Tu$ and
\[
\begin{aligned}
\widetilde Tv
 &=S_0^{-1}TS_0v
 =S_0^{-1}Tv=Tv\in \mathcal M,\\
\widetilde T^*v
 &=S_0^*T^*(S_0^{-1})^*v
 =S_0^*T^*v=T^*v=z+\overline{\lambda_0} v\in\mathcal M.
\end{aligned}
\]
Therefore,
\[
P_{\mathcal H_{1,n}}\widetilde Tv
=P_{\mathcal H_{1,n}}Tv=0.
\]
Conversely, for every $w\in\mathcal H_{1,n}$,
\[
\langle\widetilde Tw,v\rangle
=\langle w,\widetilde T^*v\rangle=0,\qquad\langle\widetilde T^*w,v\rangle
=\langle w,\widetilde Tv\rangle=0,
\]
 Thus
\[
P_{\mathbb Cv}\widetilde T|_{\mathcal H_{1,n}}=0.
\]

Let
\[
 D_0=P_{\mathcal H_0}\widetilde T|_{\mathcal H_0},
 \qquad E_{v,0}=P_{\mathcal H_0}\widetilde T|_{\mathbb Cv},
 \qquad F_v=P_{\mathbb Cv}\widetilde T|_{\mathcal H_0}.
\]
\[
E_n=P_{\mathcal H_0}\widetilde T|_{\mathcal H_{1,n}},\qquad
F_n=P_{\mathcal H_{1,n}}\widetilde T|_{\mathcal H_0},\qquad
G_n=P_{\mathcal H_{1,n}}\widetilde T|_{\mathcal H_{1,n}}.
\]
By Proposition~\ref{prop:form}, $ \widetilde T$ has the form
\[
 \widetilde T=
 \begin{pmatrix}
 D_0&E_{v,0}&E_1&E_2&\cdots\\
 F_v&\lambda_0&0&0&\cdots\\
 F_1&0&G_1&0&\cdots\\
 F_2&0&0&G_2&\cdots\\
 \vdots&\vdots&\vdots&\vdots&\ddots
 \end{pmatrix}
 \begin{matrix}
 \mathcal H_0\\\mathbb Cv\\\mathcal H_{1,1}\\\mathcal H_{1,2}\\\vdots
 \end{matrix}.
\]
The estimates in Proposition~\ref{prop:form} give
\[
 \begin{aligned}
 &  \max\{\|S_0\|,\|S_0^{-1}\|\}<1+\theta,\\
 \|E_n\|&\le\|\widetilde T|_{\mathcal H_{1,n}}\|
              \le(1+\theta)^2a_n<2a_n,\\
 \|F_n\|&=\|F_n^*\|
          \le\|\widetilde T^*|_{\mathcal H_{1,n}}\|
              \le(1+\theta)^2a_n<2a_n.
 \end{aligned}
\]
The trace norm satisfies
\[
\sum_{n=1}^{\infty}|\operatorname{Tr}G_n|
\le\sum_{n=1}^{\infty}\|G_n\|_1
\le4\sum_{n=1}^{\infty}a_n
\le\delta^2\|T\|<\infty.
\]
Here $\|G_n\|_1\le(\dim\mathcal H_{1,n})\|G_n\|\le4a_n$.

Now we change only the first row and the $\mathbb Cv$ diagonal entry
so that the complementary compression has trace zero.

Since $u\in\mathcal M$, $\langle u,v\rangle=0$, and
$\mathcal H_{1,n}\subseteq\mathcal M^\perp$, we have $u\in\mathcal H_0$.
Using $Tu\in\mathcal M$ and
$\widetilde T^*v=\overline{\lambda_0}v+\|z\|u$, we obtain
\begin{align*}
  F_vu= &\,P_{\mathbb Cv}\widetilde Tu
=\langle Tu,v\rangle v
=\langle u,T^*v\rangle v
=\|z\|v. \\
  F_v^*v=&\,P_{\mathcal H_0}\widetilde T^*v
=P_{\mathcal H_0}
  \bigl(\overline{\lambda_0}v+\|z\|u\bigr)
=\|z\|u.
\\
   F_nu=&\,P_{\mathcal H_{1,n}}\widetilde T u=P_{\mathcal H_{1,n}} T u=0, \quad n\ge1.
\end{align*}
Now  let $\tau=\lambda_0+\sum_{n=1}^{\infty}\tr G_n$ and define the operators
\[
R=-\frac{\tau}{\|z\|}\,u\otimes v^*:
                \mathbb Cv\longrightarrow\mathcal H_0
\]
and
\[
 S_1=
 \begin{pmatrix}
 I & R&0&0&\cdots\\
 0&1&0&0&\cdots\\
 0&0&I&0&\cdots\\
 0&0&0&I&\cdots\\
 \vdots&\vdots&\vdots&\vdots&\ddots
 \end{pmatrix}
 \begin{matrix}
 \mathcal H_0\\\mathbb Cv\\\mathcal H_{1,1}\\\mathcal H_{1,2}\\\vdots
 \end{matrix}.
\]
Its inverse $S_1^{-1}$ is obtained by replacing $R$ by $-R$.

For every \(w\in\mathcal H_0\), since \(F_vw\in\mathbb Cv\) and \(v\) is a unit vector,
\[
RF_vw
=\langle w,F_v^*v\rangle Rv
=\|z\|\langle w,u\rangle
  \left(-\frac{\tau}{\|z\|}u\right)=-\tau P_uw.
\]
\[
F_vRv
=F_v(-\frac{\tau}{\|z\|}u)
=-\tau v.
\]
Consequently,
\[
 RF_v=-\tau P_u,\qquad F_vR=-\tau I_{\mathbb Cv}.
\]
Now we calculate the first $2\times2$ corner:
\[
 \begin{pmatrix}I&-R\\0&1\end{pmatrix}
 \begin{pmatrix}D_0&E_{v,0}\\F_v&\lambda_0\end{pmatrix}
 \begin{pmatrix}I&R\\0&1\end{pmatrix}
 =\begin{pmatrix}\widehat D&E_v\\F_v&\lambda\end{pmatrix}
 \begin{matrix}\mathcal H_0\\\mathbb Cv\end{matrix},
\]
where
\[
 \begin{aligned}
 \widehat D&=D_0+\tau P_u,\\
 E_v&=E_{v,0}+(D_0+(\tau-\lambda_0)I)R,\\
 \lambda&=\lambda_0-\tau=-\sum_{n=1}^{\infty}\tr G_n.
 \end{aligned}
\]
Block multiplication leaves $E_n,F_n,G_n$ unchanged. The block
from $\mathbb Cv$ to $\mathcal H_{1,n}$ becomes $F_nR$, which is zero
because $\operatorname{ran}R\subseteq\mathbb Cu$ and $F_nu=0$.
Thus $S=S_0S_1$ gives
\[
 S^{-1}TS= S_1^{-1}\widetilde{T}S_1=
 \begin{pmatrix}
 \widehat D&E_v&E_1&E_2&\cdots\\
 F_v&\lambda&0&0&\cdots\\
 F_1&0&G_1&0&\cdots\\
 F_2&0&0&G_2&\cdots\\
 \vdots&\vdots&\vdots&\vdots&\ddots
 \end{pmatrix}
 \begin{matrix}
 \mathcal H_0\\\mathbb Cv\\\mathcal H_{1,1}\\\mathcal H_{1,2}\\\vdots
 \end{matrix}.
\]
Moreover, $f_n$ and $\mathcal H_{1,n}$ lie in
$\mathcal M^\perp$. Thus $S_1$, $S_1^{-1}$, and their
adjoints fix these vectors, and
\[
 \begin{aligned}
 (S^{-1}TS)f_n
 &=\widetilde Tf_n\in\mathcal H_{1,n},\\
 (S^{-1}TS)^*f_n
 &=\widetilde T^*f_n\in\mathcal H_{1,n}.
 \end{aligned}
\]
Since $\mathcal H_{1,n}\subseteq\mathcal H_0^\perp$, compression gives
\[
 \widehat Df_n=\widehat D^*f_n=0\quad(n\ge1).
\]
Thus $\overline{\operatorname{span}}\{f_n:n\ge1\}$ is an
infinite-dimensional reducing null subspace of $\widehat D$.

For the remaining estimate, note that
\[
 |\tau|
 \le |\lambda_0|+\sum_{n=1}^{\infty}\|G_n\|_1
 \le2\delta^2\|T\|+4\sum_{n=1}^{\infty}a_n
 \le3\delta^2\|T\|.
\]
Consequently,
\[
 \|R\|=\frac{|\tau|}{\|z\|}<6\delta
 =\theta,
 \qquad
 \max\{\|S_1\|,\|S_1^{-1}\|\}\le1+\theta.
\]
Together with
$\max\{\|S_0\|,\|S_0^{-1}\|\}<1+\theta$, this gives
\[
 \|S\|\|S^{-1}\|
 <(1+\theta)^4
 \le1+15\theta
 \le1+\gamma.
\]

Using \(F_v^*v=\|z\|u\) and \(\|u\|=1\), we obtain
\[
 \|F_v\|= \|F_v^*\|=\|z\|<2\delta\|T\|\le\gamma\|T\|.
\]
For $E_v$, using $\|E_{v,0}\|\le\|Tv\|<2\delta\|T\|$,  $|\tau-\lambda_0|\leq\delta^2\|T\|$ and
\[
\|D_0\| \le (1+\theta)^2\|T\| <2\|T\|.
\]
Then we obtain
\[
\|E_v\|\le\|E_{v,0}\|+\bigl(\|D_0\|+|\tau-\lambda_0|\bigr)\|R\|
 <(3\delta+2\theta)\|T\|\le\gamma\|T\|.
\]
\end{proof}
\begin{remark}\label{rem:sheartail}
Fix a construction in Lemma~\ref{lem:shear} and an integer
$m\ge1$. The bounds on the tail can be made smaller while
preserving $v$ and the first $m$ local constructions.

More precisely, let $(\varepsilon_n')$ be a positive decreasing
sequence satisfying
\[
 \varepsilon_j'=\varepsilon_j\quad(1\le j\le m),
 \qquad
 0<\varepsilon_n'\le\varepsilon_n\quad(n>m).
\]
Then the construction can be repeated with the same $\gamma$
and the new bounds $(\varepsilon_n')$ so that
\[
 v'=v,\qquad
 f_j'=f_j,\qquad
 \mathcal H_{1,j}'=\mathcal H_{1,j},\qquad
 G_j'=G_j
 \quad(1\le j\le m).
\]

Indeed, retain $x,y,\theta,\delta$ from the proof of
Lemma~\ref{lem:shear} and apply Remark~\ref{rem:tailchoice} with
\[
 a_n'=\min\left\{
 \varepsilon_n',
 \frac{\delta^2}{2^{n+2}}\|T\|
 \right\}.
\]
Since $a_j'=a_j$ for $j\le m$, the first $m$ local
constructions can be kept unchanged. Construct the remaining
blocks using the smaller bounds $a_n'$, and then repeat the
rank-one trace adjustment. All estimates remain valid because
$a_n'\le a_n$.

The scalar entry in the new construction is
\[
 \lambda'
 =-\sum_{j=1}^{m}\tr G_j-\sum_{j>m}\tr G_j'.
\]
Thus the changes to the tail may change its total trace and
hence $\lambda'$. The similarity $S'$ may also change.
The data preserved by this construction are precisely
$v$ and the triples $(f_j,\mathcal H_{1,j},G_j)$ for
$1\le j\le m$.
\end{remark}

\section{Finite-dimensional approximation and block coupling}
\label{sec:glue}

We construct commutators by finite-dimensional approximation while
controlling the diagonalization of one factor. Resolvent estimates
then allow us to solve the two off-diagonal equations.
Lemma~\ref{lem:approximation} combines these constructions into a
block commutator representation with the required norm estimates.

\begin{definition}\label{def:spectral-gap}
Let $A\in M_m(\mathbb C)$ ($m\geq 2$) be a normal matrix, with eigenvalues
$\mu_1,\ldots,\mu_m$ counted with multiplicity. Define its spectral gap by
\[
 \operatorname{gap}(A)
 =\min_{i\ne j}|\mu_i-\mu_j|.
\]
\end{definition}
We first estimate how a small perturbation affects the spectral gap of a normal matrix.
\begin{proposition}\label{prop:spectral-gap}
Let $A,B\in M_m(\mathbb{C})$ be normal, with $m\geqslant2$, and suppose
\begin{equation*}
\lVert A-B\rVert <\frac{1}{2}\operatorname{gap}(A).
\end{equation*}
Then
\begin{equation*}
\operatorname{gap}(B)\geqslant\operatorname{gap}(A)-2\lVert A-B\rVert.
\end{equation*}
\end{proposition}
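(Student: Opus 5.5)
The plan is to use the standard eigenvalue perturbation bound for normal matrices: if $A$ and $B$ are normal, the spectrum of $B$ lies within distance $\|A-B\|$ of the spectrum of $A$, with multiplicities matched. Set $\varepsilon=\|A-B\|$ and $g=\operatorname{gap}(A)$. The hypothesis $\varepsilon<g/2$ gives $g>0$, so the eigenvalues $\mu_1,\ldots,\mu_m$ of $A$ are distinct. The open discs $\Delta_i=\{z:|z-\mu_i|<g/2\}$ are then pairwise disjoint.

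The first step is to show $\sigma(B)\subseteq\bigcup_i\overline{D}(\mu_i,\varepsilon)$. Take $z\notin\bigcup_i\overline{D}(\mu_i,\varepsilon)$. Since $A$ is normal, $\|(A-z)^{-1}\|=\operatorname{dist}(z,\sigma(A))^{-1}<\varepsilon^{-1}$. We can then write $B-z=(A-z)\bigl(I+(A-z)^{-1}(B-A)\bigr)$. The Neumann series shows this is invertible, because the second factor differs from $I$ by an operator of norm less than $1$. This step uses normality of $A$ only; normality of $B$ is used in the next step.

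The second step, which is the main obstacle, is to show that each closed disc $\overline{D}(\mu_i,\varepsilon)$ contains exactly one eigenvalue of $B$, counted with multiplicity. I would use a continuity or homotopy argument. Let $B_t=A+t(B-A)$ for $t\in[0,1]$. These matrices need not be normal, so the resolvent bound must be rechecked. It still holds: for $z$ outside the $\varepsilon$-discs, the estimate $\|(A-z)^{-1}\|\,t\varepsilon<1$ only involves $A$. Hence $\sigma(B_t)\subseteq\bigcup_i\overline{D}(\mu_i,\varepsilon)$ for every $t$. Because $\varepsilon<g/2$, these closed discs are disjoint. The Riesz projection of $B_t$ for the circle $|z-\mu_i|=r$, with $\varepsilon<r<g/2$, varies continuously in $t$, so its rank is constant. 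At $t=0$ the rank is $1$, so $B=B_1$ has exactly one eigenvalue $\nu_i$, counted with multiplicity, in $\overline{D}(\mu_i,\varepsilon)$. Since the discs cover $\sigma(B)$, the list $\nu_1,\ldots,\nu_m$ is the full spectrum of $B$ with multiplicity. An alternative would be to count eigenvalues with Rouché's theorem applied to $\det(B_t-z)$.

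Finally, for $i\ne j$ the triangle inequality gives
\[
|\nu_i-\nu_j|\ge|\mu_i-\mu_j|-|\nu_i-\mu_i|-|\nu_j-\mu_j|\ge g-2\varepsilon.
\]
Taking the minimum over $i\ne j$ gives $\operatorname{gap}(B)\ge\operatorname{gap}(A)-2\|A-B\|$. Here $\operatorname{gap}(B)$ is defined because $B$ is normal, and the bound is positive, so the eigenvalues of $B$ are distinct.
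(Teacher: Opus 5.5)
Your proof is correct, but it runs the perturbation argument in the opposite direction from the paper.

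You use normality of $A$ to trap $\sigma(B)$ in the discs $\overline{D}(\mu_i,\varepsilon)$. That inclusion alone does not stop two eigenvalues of $B$ from landing in the same disc. So you then need the homotopy $B_t$ and continuity of the Riesz projections to get exactly one eigenvalue per disc.

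The paper uses normality of $B$ instead. Expand a unit eigenvector $x_i$ of $A$ for $\mu_i$ in an orthonormal eigenbasis of $B$. This gives $\operatorname{dist}(\mu_i,\sigma(B))\le\|(B-A)x_i\|\le\varepsilon$, so for each $i$ there is $\nu_i\in\sigma(B)$ with $|\mu_i-\nu_i|\le\varepsilon$. The triangle inequality then gives $|\nu_i-\nu_j|\ge g-2\varepsilon>0$ for $i\ne j$. These are $m$ distinct points in the spectrum of an $m\times m$ matrix, hence all of its eigenvalues, each simple.

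In other words, your Step 1 with the roles of $A$ and $B$ swapped already finishes the proof by counting. The swap is legitimate because both matrices are normal, and the step you identified as the main obstacle disappears.

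Your route buys generality in the other variable. It uses normality of $B$ only to make $\operatorname{gap}(B)$ meaningful. So it proves the bound for any $B$ within $\varepsilon$ of a normal $A$, with eigenvalues counted by algebraic multiplicity. This is essentially the continuity argument the paper itself uses in Lemma~\ref{lem:diagonalization}, where $\Lambda+tE$ is not normal.

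The paper's route is shorter and avoids any analytic continuity argument. It also needs normality only of $B$, not of $A$.
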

\begin{proof}
Let $g=\operatorname{gap}(A)$ and $\delta=\lVert A-B\rVert <g/2$.
Let $\lambda_1,\ldots,\lambda_m$ be the eigenvalues of $A$, and choose unit
vectors $x_i$ with $Ax_i=\lambda_i x_i$. Since $B$ is normal, choose an
orthonormal basis $\{e_j\}_{j=1}^{m}$ with $Be_j=\nu_j e_j$. Write
\begin{equation*}
x_i=\sum_{j=1}^{m}c_{ij}e_j,\qquad\sum_{j=1}^{m}|c_{ij}|^2=1.
\end{equation*}
Set $d_i=\operatorname{dist}\bigl(\sigma(B),\lambda_i\bigr)$.
Since $d_i\leqslant|\nu_j-\lambda_i|$ for every $j$,
\begin{equation*}
\begin{aligned}
d_i^2=d_i^2\sum_{j=1}^{m}|c_{ij}|^2
&\leqslant\sum_{j=1}^{m}|\nu_j-\lambda_i|^2|c_{ij}|^2\\
&=\left\lVert\sum_{j=1}^{m}c_{ij}(\nu_j-\lambda_i)e_j\right\rVert^2\\
&=\lVert (B-A)x_i\rVert^2\leq\delta^2.
\end{aligned}
\end{equation*}
Choose $\mu_i\in\sigma(B)$ with $|\lambda_i-\mu_i|\leqslant\delta$.
For $i\ne j$,
\begin{equation*}
|\mu_i-\mu_j|\geqslant|\lambda_i-\lambda_j|-|\mu_i-\lambda_i|-|\mu_j-\lambda_j|
\geqslant g-2\delta>0.
\end{equation*}
Thus $\mu_1,\ldots,\mu_m$ are distinct and list all eigenvalues of $B$.
Taking the minimum over $i\ne j$ proves the assertion.
\end{proof}

We next control the diagonalizing similarity for a small perturbation of a diagonal matrix.

\begin{lemma}\label{lem:diagonalization}
Let $\Lambda=\operatorname{diag}(\nu_1,\ldots,\nu_m)$, $m\geqslant2$, and
suppose that $\operatorname{gap}(\Lambda)\geqslant g>0$.
Let $0<\varepsilon\leqslant1/4$. If
$
\lVert E\rVert\leqslant g\varepsilon/8,
$
then there exist an invertible $V\in M_m(\mathbb{C})$ and a diagonal matrix
$\widetilde{\Lambda}$ such that
\begin{equation*}
V^{-1}(\Lambda+E)V=\widetilde{\Lambda},\qquad
\lVert V-I\rVert\leqslant\varepsilon,\qquad
\lVert\widetilde{\Lambda}-\Lambda\rVert\leqslant\lVert E\rVert.
\end{equation*}
\end{lemma}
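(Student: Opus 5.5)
The plan is to diagonalize $\Lambda+E$ by hand, building the eigenvectors as explicit perturbations of the standard basis vectors $e_j$. The eigenvalue perturbation bound will come from Proposition~\ref{prop:spectral-gap}. The hypothesis $\|E\|\le g\varepsilon/8\le g/32<g/2$ lets us apply Proposition~\ref{prop:spectral-gap} only to normal matrices, and $\Lambda+E$ need not be normal. So instead I would work directly with a fixed-point or Gershgorin-type argument.

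\textbf{Step 1: the eigenvector equation.} For each $j$ we seek an eigenpair $\Lambda+E$ with eigenvector $v_j=e_j+w_j$, $w_j\perp e_j$, and eigenvalue $\mu_j=\nu_j+\langle Ev_j,e_j\rangle$. The eigenvalue equation projected onto $e_j^\perp$ reads
\[
(\Lambda-\nu_j)w_j = -(I-P_{e_j})Ev_j+\langle Ev_j,e_j\rangle w_j .
\]
On $e_j^\perp$ the operator $\Lambda-\nu_j$ is diagonal with entries of modulus at least $g$. Hence it is invertible there, with inverse $R_j$ of norm at most $1/g$.

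\textbf{Step 2: fixed point.} Define
\[
\Phi_j(w)=R_j\bigl(-(I-P_{e_j})E(e_j+w)+\langle E(e_j+w),e_j\rangle w\bigr)
\]
on the ball $\|w\|\le\varepsilon/2$. For $\|w\|\le\varepsilon/2\le 1/8$ we get
\[
\|\Phi_j(w)\|\le g^{-1}\|E\|(1+\|w\|)(1+\|w\|)\le \frac{\varepsilon}{8}\cdot\frac{81}{64}<\frac{\varepsilon}{2}.
\]
The Lipschitz constant is bounded by $g^{-1}\|E\|(1+2\|w\|+\dots)\le \varepsilon/4<1$. The contraction mapping theorem then gives $w_j$ with $\|w_j\|\le \|E\|(1+\varepsilon/2)^2/g\le 2\|E\|/g\le\varepsilon/4$.

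\textbf{Step 3: assemble $V$.} Set $V=I+W$, where $W$ has columns $w_j$, and set $\widetilde\Lambda=\operatorname{diag}(\mu_j)$. Then $(\Lambda+E)V=V\widetilde\Lambda$ holds column by column. This is the main obstacle: the column bounds $\|w_j\|\le 2\|E\|/g$ do not directly give $\|W\|\le\varepsilon$, since a crude Frobenius bound loses a factor $\sqrt m$. To avoid this, I would solve for all columns simultaneously as one operator fixed point. Write $W$ off-diagonal and let $\mathcal G$ be the map $X\mapsto (X_{ij}/(\nu_i-\nu_j))_{i\neq j}$. We need $\mathcal G$ to be bounded in operator norm by $C/g$ with $C$ independent of $m$. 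This is the standard Sylvester/Davis--Kahan issue and is not automatic for general diagonal spectra; the constant $8$ in the hypothesis suggests the authors accept such a constant.

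A cleaner route is to use a Sylvester-equation solution with the Bhatia--Davis--McIntosh bound $\|\mathcal G(X)\|\le (\pi/2)\|X\|/g$ for Hermitian spectra, or a constant of about $2$ in general. The Riccati-type equation
\[
W=\mathcal G\bigl(-E(I+W)+W\operatorname{diag}(E(I+W))\bigr)
\]
is then a contraction on $\|W\|\le\varepsilon$ given $\|E\|\le g\varepsilon/8$. Finally, $\|V-I\|\le\varepsilon\le1/4$ gives invertibility of $V$.

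\textbf{Step 4: the eigenvalue estimate.} We have $\mu_j-\nu_j=\langle E(e_j+w_j),e_j\rangle$, a diagonal entry of $E(I+W)$, so $|\mu_j-\nu_j|\le\|E\|(1+\|w_j\|)$. This is slightly worse than the claimed $\|\widetilde\Lambda-\Lambda\|\le\|E\|$. To recover the sharp bound, I would choose the normalization of $v_j$ differently, for instance via a left/right eigenvector pairing. Failing that, I expect the authors intend $\widetilde\Lambda-\Lambda=\operatorname{diag}(V^{-1}EV)$ with a slightly different accounting, and I would adjust constants accordingly.
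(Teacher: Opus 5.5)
There is a genuine gap in Step~4. The statement requires $\|\widetilde\Lambda-\Lambda\|\le\|E\|$, but your construction only gives $|\mu_j-\nu_j|=|(E(I+W))_{jj}|\le\|E\|(1+\|w_j\|)$. The remedies you suggest (a left/right eigenvector pairing, or adjusting constants) are either not carried out or prove a weaker lemma.

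The missing idea is Bauer--Fike localization, which needs normality of $\Lambda$ only, not of $\Lambda+E$. You were right that Proposition~\ref{prop:spectral-gap} does not apply, but it is not the tool needed here. If $\operatorname{dist}(z,\sigma(\Lambda))>\|E\|$, then $zI-\Lambda-E=(zI-\Lambda)\bigl(I-(zI-\Lambda)^{-1}E\bigr)$ is invertible, because $\|(zI-\Lambda)^{-1}\|=\operatorname{dist}(z,\sigma(\Lambda))^{-1}$. So each eigenvalue $\mu_j$ of $\Lambda+E$ lies within $\|E\|$ of some $\nu_i$. Your own bound gives $|\mu_j-\nu_j|\le\tfrac54\|E\|$, so $|\nu_i-\nu_j|\le\tfrac94\|E\|\le\tfrac{9g}{128}<g$. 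This forces $i=j$, hence $|\mu_j-\nu_j|\le\|E\|$. The paper does this first, with a continuity argument in $t$ placing exactly one eigenvalue in each of the disjoint disks $\{|z-\nu_j|\le\|E\|\}$. It then normalizes each eigenvector to have $j$-th coordinate $1$.

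Step~3 is also under-justified. The Bhatia--Davis--McIntosh/Koosis bounds are for Sylvester equations $AX-XB=Y$ with separated spectra. Your $\mathcal G$ instead inverts $\operatorname{ad}\Lambda$ on off-diagonal matrices, where the two spectra coincide, so you must explain how you get $\|\mathcal G\|\le\kappa/g$. The value of $\kappa$ matters because your Riccati equation has the quadratic term $W\operatorname{diag}(E(I+W))$. The self-map estimate on $\{\|W\|\le\varepsilon\}$ reads $\kappa\varepsilon(1+\varepsilon)^2/8\le\varepsilon$, which at $\varepsilon=1/4$ needs $\kappa\le 5.12$.

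Reducing to separated spectra by averaging over coordinate projections costs a factor $2$. With the normal-operator constant $3$ the paper cites (or even $2.91$), this gives $\kappa\ge 5.8$, and your estimate does not close. You would need a direct bound on $\mathcal G$, for instance by odd-symmetrizing the Bhatia--Davis--Koosis kernel; since $1/z$ is odd, the resulting multiplier vanishes on the diagonal.

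The paper avoids the quadratic term entirely. Once the exact eigenvalues $\widetilde\nu_j$ are known, $\Lambda V-V\widetilde\Lambda=-EV$ is linear in $V$. Each block $PV(I-P)$ solves a Sylvester equation between $\Lambda|_{P\mathbb C^m}$ and $\widetilde\Lambda|_{(I-P)\mathbb C^m}$, whose spectra are separated by $g-\|E\|$. The averaging identity then gives $\|V-I\|\le\frac{6\|E\|}{g-\|E\|}(1+\|V-I\|)$, hence $\|V-I\|\le\frac{6\varepsilon}{8-7\varepsilon}\le\varepsilon$.
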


\begin{proof}
For $0\le t\le1$, if
$\operatorname{dist}(z,\sigma(\Lambda))>\|E\|$, then
\[
 \|t(zI-\Lambda)^{-1}E\|
 \le\frac{\|E\|}{\operatorname{dist}(z,\sigma(\Lambda))}
 <1.
\]
Thus $zI-\Lambda-tE$ is invertible, and
\[
 \sigma(\Lambda+tE)
 \subseteq
 \bigcup_{j=1}^m
 \{z\in\mathbb C:|z-\nu_j|\le\|E\|\}.
\]
These $m$ disks are mutually disjoint because $2\|E\|<g$.
The eigenvalues depend continuously on $t$ and remain in these fixed disks. By a standard continuity argument, the eigenvalue count in each disk is independent of $t$; hence, as at $t=0$, there is exactly one eigenvalue per disk for all $t \in [0,1]$.
In particular, $\Lambda+E$ has distinct eigenvalues
$\widetilde\nu_1,\ldots,\widetilde\nu_m$ satisfying
\[
 |\widetilde\nu_j-\nu_j|\le\|E\|,
 \qquad 1\le j\le m.
\]

Let $e_1,\ldots,e_m$ be the standard orthonormal basis
of $\mathbb C^m$, so that $\Lambda e_j=\nu_j e_j$.
For each $j$, choose a nonzero eigenvector $f_j$ of
$\Lambda+E$ corresponding to $\widetilde\nu_j$.
We first show that $f_j\notin e_j^\perp$.
Indeed, since
\[
|\nu_i-\widetilde\nu_j|
\ge |\nu_i-\nu_j|-|\nu_j-\widetilde\nu_j|
\ge g-\|E\|
\qquad(i\ne j),
\]
the assumption $f_j\in e_j^\perp$ would give
\[
(g-\|E\|)\|f_j\|
\le \|(\Lambda-\widetilde\nu_j I)f_j\|
= \|Ef_j\|
\le \|E\|\|f_j\|,
\]
which contradicts $\|E\|<g/2$.

Divide $f_j$ by its $j$th coordinate, write it as $\widetilde{f}_j$, so that  the $j$th coordinate of $\widetilde{f}_j$ is $1$.  Since eigenvectors corresponding to distinct eigenvalues
are linearly independent, the vectors
$\widetilde f_1,\ldots,\widetilde f_m$ form a basis of
$\mathbb C^m$.  Define
\[
Ve_j=\widetilde{f}_j,
\qquad
\widetilde\Lambda=
\begin{pmatrix}
\widetilde\nu_1&&0\\
&\ddots&\\
0&&\widetilde\nu_m
\end{pmatrix}.
\]
Then $V$
is invertible and all the diagonal entries are 1. Moreover,
\[
(\Lambda+E)V=V\widetilde\Lambda,
\qquad
\|\widetilde\Lambda-\Lambda\|\le\|E\|.
\]

Now  we estimate $\|V-I\|$. Let \(P\) be a coordinate projection.  Then $P$ commutes with both $\Lambda$ and
$\widetilde\Lambda$. Suppose first that $P\ne0,I$. Multiplying
$\Lambda V-V\widetilde\Lambda=-EV$ on the left by $P$
and on the right by $I-P$ gives
\[
\bigl(\Lambda|_{P\mathbb C^m}\bigr)PV(I-P)
-
PV(I-P)
\bigl(\widetilde\Lambda|_{(I-P)\mathbb C^m}\bigr)
=-PEV(I-P).
\]
If $Pe_i=e_i$ and
$Pe_j=0$, then $i\ne j$, and hence
$|\nu_i-\widetilde\nu_j|\ge g-\|E\|.$ Considering these operators as restricted to the space, $\sigma(\Lambda|_{P\mathbb C^m})$ and $\sigma(\widetilde\Lambda|_{(I-P)\mathbb C^m})$ are separated by at least $g-\|E\|$. By the estimate for normal operators in
\cite[Section~10, p.~15]{BR1997}, we obtain
\[
\|PV(I-P)\|
\le \frac{3}{g-\|E\|}\|PEV(I-P)\|\le\frac{3\|E\|}{g-\|E\|}\|V\|.
\]
Replacing $P$ by $I-P$ gives the same bound for
$\|(I-P)VP\|$. Relative to $P\mathbb C^m\oplus(I-P)\mathbb C^m$,
these operators occupy opposite off-diagonal blocks.
Consequently,
\[
\begin{aligned}
 \|PV(I-P)+(I-P)VP\|
 &=\max\{\|PV(I-P)\|,\|(I-P)VP\|\}\\
 &\le\frac{3\|E\|}{g-\|E\|}\|V\|.
\end{aligned}
\]
This also holds for $P=0,I$.

Since  $V-I$ has zero diagonal, using the identity
\[
V-I
=\frac{2}{2^m}
\sum_P\bigl(PV(I-P)+(I-P)VP\bigr),
\]
where the sum on the right-hand side ranges over all standard projections.

Then we have
\[
\|V-I\|
\le \frac{6\|E\|}{g-\|E\|}\|V\|
\le \frac{6\|E\|}{g-\|E\|}
       \bigl(1+\|V-I\|\bigr).
\]
Since $\|E\|\le\varepsilon g/8$ and
$0<\varepsilon\le1/4$, rearranging gives
\[
\|V-I\|
\le \frac{6\|E\|}{g-7\|E\|}
\le \frac{6\varepsilon}{8-7\varepsilon}
\le \varepsilon.
\]
\end{proof}

Combining Lemmas~\ref{lem:extension}, \ref{lem:correction},
and~\ref{lem:diagonalization}, we enlarge a regular pair by one
dimension while controlling the factors and their diagonalization.
\begin{lemma}\label{lem:onestep}
Let $ 0\neq D=[A,B] \in M_m(\mathbb C)$, $p>2$, $0<\gamma\le1/8$. Suppose that $(A,B)$ is  a regular pair and
$\|\mathcal R_{A,B}\|\le L$. Suppose
$W^{-1}AW=\Lambda$ is diagonal with distinct entries and
$\|W-I_m\|\le1/4$. Fix $|\lambda|\le1$ outside $\sigma(A)$.
There exists $\delta>0$ such that  every trace-zero matrix $H\in M_{m+1}(\mathbb C)$ with $\|H\|_p\le\delta$ admits a regular
pair $(\widetilde A,\widetilde B)$ and an invertible $\widetilde W$
satisfying
\[
 [\widetilde A,\widetilde B]=\begin{pmatrix}D&0\\0&0\end{pmatrix}+H,
 \qquad \|\mathcal R_{\widetilde A,\widetilde B}\|\le \frac{1}{4\sqrt{\delta}},
 \qquad \widetilde W^{-1}\widetilde A\widetilde W=\widetilde\Lambda,
\]
where $\widetilde\Lambda$ is a diagonal matrix and
\begin{equation*}
 \begin{gathered}
 \left\|\widetilde A-\begin{pmatrix}A&0\\0&\lambda\end{pmatrix}\right\|_p,\quad
 \left\|\widetilde B-\begin{pmatrix}B&0\\0&0\end{pmatrix}\right\|_p\le\gamma,\\
 \left\|\widetilde W-\begin{pmatrix}W&0\\0&1\end{pmatrix}\right\|,\quad
 \left\|\widetilde\Lambda-\begin{pmatrix}\Lambda&0\\0&\lambda\end{pmatrix}\right\|\le\gamma.
 \end{gathered}
\end{equation*}
\end{lemma}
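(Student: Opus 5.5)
First apply Lemma~\ref{lem:extension} to append the eigenvalue $\lambda$. Choose a unit vector $x$ with $Dx\neq 0$ and set $R=(A-\lambda I)^{-1}$. Then fix $\theta\in(0,1/8]$ small enough that
\[
\theta\|R\|\|RDx\|\le\tfrac18,\qquad \theta\|A-\lambda I\|\le \tfrac{\gamma}{4},\qquad \theta\|RB(A-\lambda I)\|\le \tfrac{\gamma}{4}.
\]
This yields a regular pair $(\widehat A,\widehat B)$ with $[\widehat A,\widehat B]=D\oplus 0$ and $\|\mathcal R_{\widehat A,\widehat B}\|\le L_1:=5L+(3+2\|R\|\|B\|)/(\theta\|RDx\|)$. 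The perturbations $\widehat A-(A\oplus\lambda)$ and $\widehat B-(B\oplus 0)$ are rank one, so their $p$-norms equal their operator norms, which are at most $\gamma/4$.

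The diagonalization of $\widehat A$ is explicit. The proof of Lemma~\ref{lem:extension} gives $\widehat A=S(A\oplus\lambda)S^{-1}$ with $S=\begin{pmatrix}I&-\theta x\\0&1\end{pmatrix}$. Hence $W_1:=S(W\oplus 1)$ satisfies $W_1^{-1}\widehat AW_1=\Lambda\oplus\lambda=:\Lambda_1$, which is diagonal with distinct entries since $\lambda\notin\sigma(A)$. Moreover $\|W_1-(W\oplus1)\|\le\theta\|W\|\le\tfrac54\theta\le\gamma/2$ after shrinking $\theta$. Let $g=\operatorname{gap}(\Lambda_1)>0$.

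Next apply Lemma~\ref{lem:correction} to $(\widehat A,\widehat B)$ with $H$. For $\|H\|_p\le 1/(16L_1^2)$ it gives $(X,Y)$ with $\|(X,Y)\|_p\le 2L_1\|H\|_p$ and $[\widehat A+X,\widehat B+Y]=D\oplus0+H$. The pair $\widetilde A=\widehat A+X$, $\widetilde B=\widehat B+Y$ is regular with $\|\mathcal R_{\widetilde A,\widetilde B}\|\le 2L_1$. The requirement $\|\mathcal R\|\le 1/(4\sqrt\delta)$ becomes $\delta\le 1/(64L_1^2)$, which also implies the hypothesis of Lemma~\ref{lem:correction}. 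The closeness bounds for $\widetilde A,\widetilde B$ need $2L_1\delta\le 3\gamma/4$.

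The remaining step is to diagonalize $\widetilde A$, and controlling the conditioning of $W_1$ here is the main obstacle. Write
\[
W_1^{-1}\widetilde AW_1=\Lambda_1+E,\qquad E=W_1^{-1}XW_1,\qquad \|E\|\le\|W_1\|\|W_1^{-1}\|\,\|X\|_p\le 4\cdot 2L_1\delta,
\]
using that $\|W_1\|$ and $\|W_1^{-1}\|$ are at most about $2$ when $\|W-I\|\le1/4$ and $\theta$ is small. Choose $\varepsilon\le1/4$ with $\varepsilon\|W_1\|\le\gamma/2$, and require $\delta$ small enough that $\|E\|\le g\varepsilon/8$ and $\|E\|\le\gamma/2$. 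Lemma~\ref{lem:diagonalization} then gives $V$ with $\|V-I\|\le\varepsilon$ and $V^{-1}(\Lambda_1+E)V=\widetilde\Lambda$ diagonal, with $\|\widetilde\Lambda-\Lambda_1\|\le\|E\|\le\gamma$.

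Set $\widetilde W=W_1V$, so that $\widetilde W^{-1}\widetilde A\widetilde W=\widetilde\Lambda$. The triangle inequality gives
\[
\|\widetilde W-(W\oplus1)\|\le\|W_1\|\|V-I\|+\|W_1-(W\oplus 1)\|\le\gamma.
\]
Finally take $\delta$ to be the minimum of the finitely many constraints above. All of them depend only on $A,B,L,W,\lambda,\gamma$ (through $\theta$, $x$, $L_1$ and $g$) and not on $H$, so a single $\delta$ works for every admissible $H$.
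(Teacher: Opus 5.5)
Your proof is correct and follows the paper's route (Lemma~\ref{lem:extension} to append $\lambda$, Lemma~\ref{lem:correction} to absorb $H$, Lemma~\ref{lem:diagonalization} to rediagonalize). The one difference is that you diagonalize $\widehat A$ exactly through the shear $S$, so only $X$ is treated perturbatively; the paper instead conjugates by $W\oplus 1$ and absorbs the rank-one term $\theta(A-\lambda I)x$ into the perturbation by taking $\theta=\gamma g^2/(64K^2)$.

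The paper's explicit choices also buy a uniformity that is not in the statement. Picking $x$ with $\|Dx\|=\|D\|$ gives $\|RDx\|\ge\|D\|/(K+1)$, so $\delta$ depends only on $L$, $K$, $g$, $\gamma$ and a lower bound for $\|D\|$. Lemma~\ref{lem:approximation} needs exactly this to fix the $\delta_n$ before $C$ and the $\lambda_n$ are chosen. With an arbitrary $x$ your $\delta$ lacks this uniformity, but your argument yields it once you make the same choice of $x$ and an explicit $\theta$.
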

\begin{proof}
Choose $K\ge1$ and $0<g\le1$ such that
$\|A\|_p,\|B\|_p\le K$, and   $\operatorname{gap}
\begin{pmatrix}\Lambda&0\\0&\lambda\end{pmatrix}
\ge g$. Set
\[
 R=(A-\lambda I)^{-1}
   =W(\Lambda-\lambda I)^{-1}W^{-1},
\]
The Neumann series and $\|W-I\|\le1/4$ give
\[
\|R\|\leq \frac{5}{4}\cdot\frac{4}{3}\cdot\frac{1}{g}<\frac{2}{g}.
\]
Choose a unit vector $x$ with $\|Dx\|=\|D\|$.  Using \(\|A\|\le\|A\|_p\le K\) and \(|\lambda|\le1\), we obtain
\[
\|D\|=\|Dx\|
\le \|A-\lambda I\|\,\|RDx\|
\le(K+1)\|RDx\|
\]
and
\[
\|RDx\|\le \|R\|\|[A,B]\|\leq\frac{4K^2}{g}.
\]

Set
\[
 \theta=\frac{\gamma g^2}{64K^2}.
\]
Then $0<\theta<1/8$, and the preceding estimates give
\[
 \theta\|R\|\|RDx\|
 \le\frac{1}{8}\gamma<\frac18.
\]
Applying Lemma~\ref{lem:extension} gives the regular pair
\[
 \widehat A=
 \begin{pmatrix}
 A&\theta(A-\lambda I)x\\
 0&\lambda
 \end{pmatrix},
 \qquad
 \widehat B=
 \begin{pmatrix}
 B&\theta RB(A-\lambda I)x\\
 0&0
 \end{pmatrix},
\]
with
\[
 [\widehat A,\widehat B]
 =\begin{pmatrix}D&0\\0&0\end{pmatrix},
 \qquad
 \|\mathcal R_{\widehat A,\widehat B}\|
 \le\widehat L,
\]
where
\[
 \widehat L
 :=5L+\frac{(3g+4K)(K+1)}{g\theta\|D\|}.
\]
Since each column vector has rank at most one, its
Schatten $p$-norm equals its operator norm. Hence
\[
 \left\|\widehat A-
 \begin{pmatrix}A&0\\0&\lambda\end{pmatrix}\right\|_p
\le\theta(K+1)\le\frac{\gamma g}{32},
\]
\[
 \left\|\widehat B-
 \begin{pmatrix}B&0\\0&0\end{pmatrix}\right\|_p
\le\theta\|R\|\|B\|\|A-\lambda I\|
\le\frac{\gamma g(K+1)}{32K}
\le\frac{\gamma g}{16}.
\]

Take
\[
\delta=\min\left\{\frac{1}{64\widehat L^2},
                 \frac{\gamma g}{128\widehat L}\right\}.
\]
By Lemma~\ref{lem:correction}, for every trace-zero $H$ with $\|H\|_p\le\delta$, there are $X,Y$ with
\[
 \|X\|_p,\ \|Y\|_p\le  2\widehat L \|H\|_p  \leq   \frac{\gamma g}{64}
\]
such that
\[
[\widehat A+X,\widehat B+Y]=[\widehat A,\widehat B]+H.
\]
Set
$
 \widetilde A=\widehat A+X,\,
 \widetilde B=\widehat B+Y.
$
Then $(\widetilde A,\widetilde B)$ is a regular pair and $\|\mathcal R_{\widetilde A,\widetilde B}\|\le 2\widehat L.$
Moreover,
\[
\left\|\widetilde A-
\begin{pmatrix}A&0\\0&\lambda\end{pmatrix}\right\|_p
\le\frac{3\gamma g}{64},
\qquad
\left\|\widetilde B-
\begin{pmatrix}B&0\\0&0\end{pmatrix}\right\|_p
\le\frac{3\gamma g}{64}<\gamma.
\]

Now we set
\[
 U=\begin{pmatrix}W&0\\0&1\end{pmatrix}.
\]
Since $\|W-I\|\le1/4$,
\[
 \|U\|\le\frac54,\qquad
 \|U^{-1}\|\le\frac43,\qquad
 \|U\|\|U^{-1}\|<2.
\]
It follows that
\[
\begin{aligned}
 \left\|U^{-1}\widetilde AU-
 \begin{pmatrix}\Lambda&0\\0&\lambda\end{pmatrix}\right\|
 &=
 \left\|U^{-1}\left(
 \widetilde A-
 \begin{pmatrix}A&0\\0&\lambda\end{pmatrix}
 \right)U\right\|\\
 &\le2\cdot\frac{3\gamma g}{64}
 =\frac{3\gamma}{4}\cdot\frac{g}{8}.
\end{aligned}
\]
Apply Lemma~\ref{lem:diagonalization} for
$3\gamma/4$ (in place of $\varepsilon$), $g$ and the above matrix, there exist an invertible matrix $V$ and
a diagonal matrix $\widetilde\Lambda$ such that
\[
 V^{-1}U^{-1}\widetilde AUV=\widetilde\Lambda,
 \qquad
 \|V-I\|\le\frac{3\gamma}{4},
\]
and
\[
 \left\|\widetilde\Lambda-
 \begin{pmatrix}\Lambda&0\\0&\lambda\end{pmatrix}\right\|
 \le\frac{3\gamma g}{32}<\gamma.
\]
Define $\widetilde W=UV$. Then
\[
 \left\|\widetilde W-
 \begin{pmatrix}W&0\\0&1\end{pmatrix}\right\|
 =\|U(V-I)\|
 \le\frac{15\gamma}{16}<\gamma.
\]

The choice of $\theta$ uses only $K,g,\gamma$.
If $\|D\|\ge d>0$ and we replace $\|D\|$ by $d$, then the choice of
$\delta$ depends only on $L, K,g,\gamma,d$.
\end{proof}
To apply this construction to successive compressions, we correct
their traces and estimate the resulting increments.
\begin{lemma}\label{lem:trace-truncations}
Let $G\in \mathcal{S}_1(\mathcal H)$ with $\tr G=0$, let $(P_n)_{n\ge0}$ be increasing
finite-rank orthogonal projections, and let $e$ be a unit vector in
$\operatorname{ran}P_0$. Set
$G^{(n)}=P_nGP_n-\tr(P_nGP_n)P_e$. Then
\[
\begin{aligned}
 \|G-G^{(n)}\|_1&\le2\|G-P_nGP_n\|_1,\\
 \|G^{(n)}-G^{(m)}\|_1&\le2\|G-P_mGP_m\|_1,
 \quad 0\le m\le n.
 \end{aligned}
\]
All finite-rank operators are viewed on the original space.
\end{lemma}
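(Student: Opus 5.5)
The plan is to reduce both inequalities to a single estimate on the trace correction, using that $P_e$ is a rank-one projection with $\|P_e\|_1=1$. Since $\tr G=0$, write
\[
 \tr(P_nGP_n)=\tr(P_nGP_n)-\tr G=-\tr(G-P_nGP_n),
\]
so by Proposition~\ref{prop:norms}(1),
\[
 |\tr(P_nGP_n)|\le\|G-P_nGP_n\|_1 .
\]
Then the triangle inequality gives
\[
 \|G-G^{(n)}\|_1\le\|G-P_nGP_n\|_1+|\tr(P_nGP_n)|\,\|P_e\|_1\le2\|G-P_nGP_n\|_1,
\]
which is the first inequality.

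For the second inequality, with $m\le n$, I would avoid passing through $G$, since doing so would pick up a term in $\|G-P_nGP_n\|_1$ as well as one in $\|G-P_mGP_m\|_1$. Instead, use that the projections increase, so $P_mP_n=P_nP_m=P_m$. Hence
\[
 P_mGP_m=P_m(P_nGP_n)P_m,\qquad P_nGP_n-P_mGP_m=P_n(G-P_mGP_m)P_n .
\]
The second identity holds because $P_nP_mGP_mP_n=P_mGP_m$. By Proposition~\ref{prop:norms}(2),
\[
 \|P_nGP_n-P_mGP_m\|_1\le\|G-P_mGP_m\|_1 .
\]
For the trace parts, $e\in\operatorname{ran}P_0\subseteq\operatorname{ran}P_m$, and
\[
 G^{(n)}-G^{(m)}=(P_nGP_n-P_mGP_m)-\bigl(\tr(P_nGP_n)-\tr(P_mGP_m)\bigr)P_e .
\]
The scalar coefficient here is $\tr(P_nGP_n-P_mGP_m)$, whose absolute value is at most $\|P_nGP_n-P_mGP_m\|_1\le\|G-P_mGP_m\|_1$. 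Combining the two pieces gives $\|G^{(n)}-G^{(m)}\|_1\le2\|G-P_mGP_m\|_1$.

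There is no genuine obstacle; everything follows from the triangle inequality and ideal properties. The only point needing care is the second bound. One has to use the monotonicity $P_m\le P_n$ to rewrite the difference as $P_n(G-P_mGP_m)P_n$, rather than comparing both terms with $G$, which would give a weaker constant.
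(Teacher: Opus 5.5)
Your proposal is correct and follows the paper's proof essentially verbatim: the first bound uses $\tr G=0$ to get $|\tr(P_nGP_n)|\le\|G-P_nGP_n\|_1$, and the second rewrites $G^{(n)}-G^{(m)}=H-(\tr H)P_e$ with $H=P_n(G-P_mGP_m)P_n$ and bounds it by $2\|H\|_1\le2\|G-P_mGP_m\|_1$. The only difference is that you spell out the monotonicity identity $P_nP_m=P_mP_n=P_m$, which the paper uses implicitly.
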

\begin{proof}
Since $\tr G=0$,
\[
 |\tr(P_nGP_n)|=|\tr(G-P_nGP_n)|\le\|G-P_nGP_n\|_1,
\]
which proves the first estimate. For $m\le n$, set
$H=P_n(G-P_mGP_m)P_n$. Then
\[
 G^{(n)}-G^{(m)}=H-(\tr H)P_e,
\]\[
 \|H-(\tr H)P_e\|_1\le2\|H\|_1
 \le2\|G-P_mGP_m\|_1.
\]
\end{proof}
We next use the resolvent estimates from Lemma~\ref{lem:circle}
to solve the two off-diagonal equations with small trace norms.
\begin{lemma}\label{lem:coupling}
Let $0<\beta\le1/8$, $p>2$ and $M\ge1$, and set $c=3(2M)^p$
as in Lemma~\ref{lem:circle}. Suppose that $C\in \mathcal{S}_p(\mathcal H_0)$
satisfies $\|C\|\le1$ and $\|C\|_p\le M$, and let $(\lambda_n)$
be a resolvent sequence supplied by that lemma.
There exists $\eta=\eta(p,M,\beta)>0$ with the following property.

Suppose that $\mathcal H_1=\mathbb C^m\oplus\ell^2(\mathbb N)$,
where $1\le m\le3$, and that
\[
 W^{-1}AW=\Lambda=
 \begin{pmatrix}
 \mu_1^h&&&&0\\
 &\ddots&&&\\
 &&\mu_m^h&&\\
 &&&\mu_1&\\
 0&&&&\ddots
 \end{pmatrix}
 \begin{matrix}
 \mathbb Ce_1^h\\
 \vdots\\
 \mathbb Ce_m^h\\
 \mathbb Ce_1\\
 \vdots
 \end{matrix},
\]
where the right vectors form an orthonormal basis,
$W$ is invertible, and
\[
 |\mu_i^h|>\frac{11}{10}\quad(1\le i\le m),
 \qquad \|W\|,\ \|W^{-1}\|\le2.
\]
Assume further that
\[
 |\mu_n-\lambda_n|,\quad
 \|We_n-e_n\|,\quad
 \|(W^{-1})^*e_n-e_n\|
 \le e^{-(c+3)n^3},\quad n\ge1.
\]
If bounded operators $E:\mathcal H_1\to\mathcal H_0$ and
$F:\mathcal H_0\to\mathcal H_1$ satisfy
\begin{equation}\label{eq:EFhyp} \tag{$*$}
 \|E\|,\ \|F\|<\eta,\qquad
 \|Ee_n\|,\ \|F^*e_n\| \le\min\{\eta,e^{-n^4}\},\quad n\ge1,
\end{equation}
then there are $X\in \mathcal{S}_1(\mathcal H_1,\mathcal H_0)$ and
$Y\in \mathcal{S}_1(\mathcal H_0,\mathcal H_1)$ with
$\|X\|_1,\|Y\|_1<\beta/2$ such that

$$
CX-XA=E,\qquad  AY-YC=F.$$
\end{lemma}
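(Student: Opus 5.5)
Since $W$ diagonalizes $A$, the plan is to reduce both Sylvester equations to column-by-column (respectively row-by-row) resolvent problems. Put $X=X'W^{-1}$. Then $CX-XA=E$ becomes $CX'-X'\Lambda=EW$. Column by column, with $w_k$ ranging over the eigenbasis $e_1^h,\dots,e_m^h,e_1,e_2,\dots$ and $\mu_k$ the corresponding eigenvalue, this reads
\[
X'w_k=(C-\mu_k I)^{-1}EWw_k .
\]
Similarly, put $Y=WY'$. Then $AY-YC=F$ becomes $\Lambda Y'-Y'C=W^{-1}F$. Row by row, $Y'^*w_k=(C^*-\overline{\mu_k}I)^{-1}F^*(W^{-1})^*w_k$. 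I therefore define $X'$ and $Y'$ by these formulas and set $X'=\sum_k X'w_k\otimes w_k^*$. The identity then holds on each basis vector, and it extends by boundedness once I show that $\sum_k\|X'w_k\|<\infty$. That summability also gives $\|X'\|_1\le\sum_k\|X'w_k\|$, since each term is rank one. Finally, $\|X\|_1\le\|X'\|_1\|W^{-1}\|\le2\|X'\|_1$, and the same argument applies to $Y$.

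The resolvent bounds come in two regimes. For the $m\le3$ head vectors, $|\mu_i^h|>11/10$ and $\|C\|\le1$ give $\|(C-\mu_i^hI)^{-1}\|\le10$ by a Neumann series. The head contribution is then at most $3\cdot10\cdot\|E\|\,\|W\|\le60\eta$.

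For the tail vectors $e_n$, I perturb off the Lemma~\ref{lem:circle} points. There $\|(C-\lambda_nI)^{-1}\|\le e^{cn^3}$ and $|\mu_n-\lambda_n|\le e^{-(c+3)n^3}$, so $\|(C-\lambda_nI)^{-1}\|\,|\mu_n-\lambda_n|\le e^{-3n^3}\le1/2$. A Neumann series then shows that $\mu_n\notin\sigma(C)$ and $\|(C-\mu_nI)^{-1}\|\le2e^{cn^3}$.

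Next I bound the forcing term. Write $EWe_n=Ee_n+E(We_n-e_n)$, which gives
\[
\|EWe_n\|\le\min\{\eta,e^{-n^4}\}+\eta e^{-(c+3)n^3}.
\]
The $n$-th tail column is then at most $2e^{cn^3}\bigl(\min\{\eta,e^{-n^4}\}+\eta e^{-(c+3)n^3}\bigr)$. The second part sums to at most $2\eta\sum_n e^{-3n^3}\le\eta$.

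The first part is the main obstacle, because the $\eta$-smallness has to coexist with the super-exponential decay. Using $\min\{\eta,e^{-n^4}\}\le\eta^{1/2}e^{-n^4/2}$, the first part is bounded by $2\eta^{1/2}\sum_n e^{cn^3-n^4/2}$. This series converges to a constant $K(c)$ that depends only on $p$ and $M$.

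Putting the pieces together gives $\|X\|_1\le2\bigl(60\eta+\eta+2K(c)\eta^{1/2}\bigr)$. The same computation with $F^*$, $(W^{-1})^*e_n$ and $C^*$ gives the identical bound for $\|Y\|_1$, because $\|(C^*-\overline{\mu}I)^{-1}\|=\|(C-\mu I)^{-1}\|$. I would choose $\eta=\eta(p,M,\beta)$ small enough that this bound is less than $\beta/2$.

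The remaining check is that the formal solutions really solve the equations. Since $\sum_k\|X'w_k\|$ converges, $X'$ is a well-defined trace-class operator. For each $k$, $(CX'-X'\Lambda)w_k=EWw_k$. Both sides are bounded and the $w_k$ form an orthonormal basis, so the identity holds everywhere. Undoing the substitution $X=X'W^{-1}$ then gives $CX-XA=E$, and the same reasoning gives $AY-YC=F$.
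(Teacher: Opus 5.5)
Your proposal is essentially the paper's proof. It uses the same substitutions $X=\widetilde XW^{-1}$ and $Y=W\widetilde Y$, the same column and row resolvent formulas, the bound $10$ on the head resolvents, and the Neumann-series perturbation off $\lambda_n$ for the tail; your estimate $\min\{\eta,e^{-n^4}\}\le\eta^{1/2}e^{-n^4/2}$ just makes explicit the choice of $\eta$ that the paper leaves implicit. There is one small sign slip: solving $w_k^*Y'(\mu_k I-C)=w_k^*W^{-1}F$ gives $Y'^*w_k=-(C^*-\overline{\mu_k}I)^{-1}F^*(W^{-1})^*w_k$ (the paper's $\widetilde Y$ carries this minus sign), so your formula as written yields $AY-YC=-F$, though the norm bounds are unaffected.
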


\begin{proof}
For $1\le i\le m$, define
\[
 \xi_i^h=(C-\mu_i^hI)^{-1}EWe_i^h,\qquad
 \zeta_i^h=(C^*-\overline{\mu_i^h}I)^{-1}
       F^*(W^{-1})^*e_i^h.
\]
For $n\ge1$, define
\[
 \xi_n
 =(C-\mu_nI)^{-1}EWe_n,\qquad
 \zeta_n
 =(C^*-\overline{\mu_n}I)^{-1}
       F^*(W^{-1})^*e_n.
\]
All these vectors belong to $\mathcal H_0$.

Relative to the ordered basis
$e_1^h,\ldots,e_m^h,e_1,e_2,\ldots$ of $\mathcal H_1$, set
\[
 \widetilde X=
 \begin{pmatrix}
 \xi_1^h&\cdots&\xi_m^h&\xi_1&\xi_2&\cdots
 \end{pmatrix}
 \begin{matrix}\mathcal H_0\end{matrix},
\qquad
 \widetilde Y=-
 \begin{pmatrix}
 (\zeta_1^h)^*\\
 \vdots\\
 (\zeta_m^h)^*\\
 \zeta_1^*\\
 \zeta_2^*\\
 \vdots
 \end{pmatrix}
 \begin{matrix}
 \mathbb Ce_1^h\\
 \vdots\\
 \mathbb Ce_m^h\\
 \mathbb Ce_1\\
 \mathbb Ce_2\\
 \vdots
 \end{matrix}.
\]
Thus $\widetilde X:\mathcal H_1\to\mathcal H_0$ and
$\widetilde Y:\mathcal H_0\to\mathcal H_1$.

For $1\le i\le m$, using \(
\|(C-\mu_i^hI)^{-1}\|
<10
\) and
$\|W\|,\|W^{-1}\|\le2$, we obtain
\[
 \|\xi_i^h\|\leq 10\|E\|\|W\|< 20\eta,\qquad
 \|\zeta_i^h\|\leq 10\|F^*\|\|W^{-1}\|<20\eta.
\]
For $n\ge1$, use
\[
 \|Ee_n\|,\ \|F^*e_n\|
 \le\min\{\eta,e^{-n^4}\}.
\]
By Lemma~\ref{lem:circle} and the assumption,
\[
 |\mu_n-\lambda_n|\,\|(C-\lambda_nI)^{-1}\|
 \le e^{-3n^3}<\frac{1}{2}.
\]
Then the Neumann series shows that
\[
C-\mu_nI
=(C-\lambda_nI)
\bigl(I-(\mu_n-\lambda_n)(C-\lambda_nI)^{-1}\bigr)
\]
 is invertible,
so $\xi_n$ and $\zeta_n$ are well defined.

The definitions give
\[
 \begin{aligned}
 (C-\lambda_nI)\xi_n
 &=EWe_n+(\mu_n-\lambda_n)\xi_n,\\
 (C^*-\overline{\lambda_n}I)\zeta_n
 &=F^*(W^{-1})^*e_n
   +(\overline{\mu_n}-\overline{\lambda_n})\zeta_n.
 \end{aligned}
\]

Applying $(C-\lambda_nI)^{-1}$  to the first identity yields
\[
 \begin{aligned}
 \|\xi_n\|
 &\le\|(C-\lambda_nI)^{-1}\|
 \bigl(
 \|Ee_n\|+\|E\|\,\|We_n-e_n\|
       +|\mu_n-\lambda_n|\,\|\xi_n\|
 \bigr)\\
 &\le e^{cn^3}\min\{\eta,e^{-n^4}\}
       +\eta e^{-3n^3}
       +e^{-3n^3}\|\xi_n\|.
 \end{aligned}
\]
Moving the last term in each inequality to the left and using
$1-e^{-3n^3}>1/2$, we obtain
\[
 \|\xi_n\|
 \le 2e^{cn^3}\min\{\eta,e^{-n^4}\}
       +2\eta e^{-3n^3}.\]
The same argument applies to \(\zeta_n\).
These bounds are summable, since
$
 \sum_{n=1}^{\infty} e^{cn^3-n^4}<\infty.
$
It is routine to check that both matrices converge in trace norm.
\[
 \begin{aligned}
 \|\widetilde X\|_1
 &\le\sum_{i=1}^m\|\xi_i^h\|
       +\sum_{n=1}^{\infty}\|\xi_n\|,\\
 \|\widetilde Y\|_1
 &\le\sum_{i=1}^m\|\zeta_i^h\|
       +\sum_{n=1}^{\infty}\|\zeta_n\|.
 \end{aligned}
\]

  Choose $\eta=\eta(p,M,\beta)>0$ such that
\[
 4\eta\left(30+\sum_{n=1}^{\infty}e^{-3n^3}\right)
 +4\sum_{n=1}^{\infty}e^{cn^3}\min\{\eta,e^{-n^4}\}<\frac{\beta}{2}.
\]
 Then we have
 \[\|\widetilde X\|_1,
 \|\widetilde Y\|_1 <\frac{\beta}{4}.
\]

The defining equations give
\[
 (C-\mu_nI)\xi_n=EWe_n,
 \qquad
 \zeta_n^*(C-\mu_nI)=e_n^*W^{-1}F.
\]
Since $\widetilde Xe_n=\xi_n$ and
$e_n^*\widetilde Y=-\zeta_n^*$, we obtain
\[
 (C\widetilde X-\widetilde X\Lambda)e_n
 =(C-\mu_nI)\xi_n
 =EWe_n,
 \]\[
 e_n^*(\Lambda\widetilde Y-\widetilde YC)
 =-\mu_n\zeta_n^*+\zeta_n^*C
 =\zeta_n^*(C-\mu_nI)
 =e_n^*W^{-1}F.
\]
The same calculations hold for the first $m$ basis vectors.
Therefore,
\[
 C\widetilde X-\widetilde X\Lambda=EW,\qquad
 \Lambda\widetilde Y-\widetilde YC=W^{-1}F.
\]

Set
\[
 X=\widetilde XW^{-1},\qquad Y=W\widetilde Y.
\]
Since $A=W\Lambda W^{-1}$, it follows that
\[
 CX-XA=E,\qquad AY-YC=F.
\]
Moreover,
\[
 \|X\|_1\le2\|\widetilde X\|_1 <\frac{\beta}{2},\qquad
 \|Y\|_1\le2\|\widetilde Y\|_1<\frac{\beta}{2}.
\]
\end{proof}

We now pass to the limit in the finite-dimensional construction
and apply Lemma~\ref{lem:coupling} to obtain the required block
commutator representation.

Fix $p>2$ and $M\ge1$. Write
$\mathcal H_1=\mathbb C^m\oplus\ell^2(\mathbb N)$, where $1\le m\le3$.
Let $e$ be a unit vector in $\mathbb C^m$ and let $(e_n)_{n\ge1}$
be the standard basis of $\ell^2(\mathbb N)$. For $n\ge0$, let
$P_n$ be the orthogonal projection onto
\[
 \mathbb C^m\oplus\operatorname{span}\{e_1,\ldots,e_n\},
\]
where $P_0$ projects onto $\mathbb C^m$.
We consider $G\in \mathcal{S}_1(\mathcal H_1)$, bounded operators
$E:\mathcal H_1\to\mathcal H_0$ and $F:\mathcal H_0\to\mathcal H_1$,
and
\[
 C\in \mathcal{S}_p(\mathcal H_0),\qquad \|C\|\le1,\qquad \|C\|_p\le M.
\]

\begin{lemma}\label{lem:approximation}
Let $0<\beta\le1/8$, and take $\eta$ from Lemma~\ref{lem:coupling}
for $p,M,\beta$. For each nonzero trace-zero $D\in M_m(\mathbb C)$
with $16\sqrt3\,\|D\|_p<\beta$, there is a positive decreasing sequence
$(\varepsilon_n^*)_{n\ge0}$, depending only on $D,p,M,\beta$, with
the following property.
If $E,F$ satisfy \eqref{eq:EFhyp} and
\begin{equation*}\label{eq:Ghyp}
 \tr G=0,\qquad P_0GP_0-\tr(P_0GP_0)P_e=D,\qquad
 \|G-P_nGP_n\|_1\le\varepsilon_n^*\quad(n\ge0),
\end{equation*}
then there are $A_G,B_G\in \mathcal{S}_p(\mathcal H_1)$,
$X\in \mathcal{S}_1(\mathcal H_1,\mathcal H_0)$ and
$Y\in \mathcal{S}_1(\mathcal H_0,\mathcal H_1)$ such that
\[
 \left[\begin{pmatrix}C&0\\0&A_G\end{pmatrix},
 \begin{pmatrix}0&X\\Y&B_G\end{pmatrix}\right]
 =\begin{pmatrix}0&E\\F&G\end{pmatrix}
 \begin{matrix}\mathcal H_0\\\mathcal H_1\end{matrix},
\]
and
\begin{equation}\label{eq:complementarybounds} \tag{$**$}
 \begin{gathered}
 \left\|\begin{pmatrix}C&0\\0&A_G\end{pmatrix}\right\|
 \le\frac{5}{4}+\beta,\\
 \left\|\begin{pmatrix}C&0\\0&A_G\end{pmatrix}\right\|_p
 \le 3M+\beta,
 \qquad
 \left\|\begin{pmatrix}0&X\\Y&B_G\end{pmatrix}\right\|_p<3\beta.
 \end{gathered}
\end{equation}
Furthermore, if $G=0$, the same block
representation and estimates hold for every $1\le m\le3$.
\end{lemma}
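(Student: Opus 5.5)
The plan is an inductive finite-dimensional construction on the increasing subspaces $\operatorname{ran}P_n$, passed to a limit, followed by Lemma~\ref{lem:coupling} for the off-diagonal blocks. Set $G^{(n)}=P_nGP_n-\tr(P_nGP_n)P_e$ as in Lemma~\ref{lem:trace-truncations}, so that $G^{(0)}=D\oplus 0$ and each $G^{(n)}$ is a trace-zero matrix on $\mathbb C^m\oplus\operatorname{span}\{e_1,\dots,e_n\}$.

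\textbf{Seed.} Apply Lemma~\ref{lem:seed} to $D$ with $[\alpha,\beta']$ chosen inside $(11/10,5/4]$, say $\alpha=6/5$ and $\beta'=5/4$. This gives a regular pair $(A_0,B_0)$ with $[A_0,B_0]=D$. Here $A_0$ is diagonal, so $W_0=I$, and its eigenvalues $\mu_i^h$ exceed $11/10$ in modulus. We have $\|A_0\|\le 5/4$ and $\|A_0\|_p\le \frac54 m^{1/p}\le 3$. Also $\|B_0\|_p$ is a multiple (depending only on $m\le 3$ and $\beta'-\alpha$) of $\|D\|_p$; the hypothesis $16\sqrt3\|D\|_p<\beta$ should give $\|B_0\|_p<\beta$ after the arithmetic. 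Record $L_0=\|\mathcal R_{A_0,B_0}\|$ and $d=\|D\|>0$.

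\textbf{Inductive step $n-1\to n$.} Apply Lemma~\ref{lem:onestep} to $(A_{n-1},B_{n-1},W_{n-1},\Lambda_{n-1})$, with the new eigenvalue $\lambda=\lambda_n$ from Lemma~\ref{lem:circle} (note $|\lambda_n|\le1$ and $\lambda_n\notin\sigma(A_{n-1})$). Use a tolerance $\gamma_n\le\min\{\beta 2^{-n-2},\,e^{-(c+3)n^3}/2,\ldots\}$ and the perturbation
\[
H_n=G^{(n)}-\bigl(G^{(n-1)}\oplus0\bigr).
\]
The commutator identity for the new pair is then $[A_n,B_n]=G^{(n)}$. Lemma~\ref{lem:onestep} yields a threshold $\delta_n$, and since the upper-left corner is always $D\ne0$ we may use $\|\cdot\|\ge d$ there. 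By Lemma~\ref{lem:trace-truncations}, $\|H_n\|_p\le\|H_n\|_1\le 4\|G-P_{n-1}GP_{n-1}\|_1$, so it suffices to choose $\varepsilon^*_{n-1}\le\delta_n/4$.

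The main obstacle is that $\delta_n$ depends on data produced at stage $n-1$, namely $L_{n-1}$, $K$, the gap $g_{n-1}$ and $\gamma_n$. These in turn depend on the earlier $H_j$, hence on $G$, and not only on $D,p,M,\beta$. To remove this dependence:
\begin{itemize}
\item Bound $K$ uniformly by $3+\beta$ for $A$, and by $\beta$ plus the seed bound for $B$.
\item Bound $L_n$ via Lemma~\ref{lem:onestep}, which gives $L_n\le 1/(4\sqrt{\delta_n})$, a bound depending only on $\delta_n$.
\item Bound the gap from below using the fixed circles $|\lambda_n|=n^{-2/p}$, the seed spacing, and the perturbations $\sum\gamma_j$ controlled by Proposition~\ref{prop:spectral-gap}. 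This requires the $\lambda_j$ to be distinct with distance depending only on $C$ and $n$; where this fails, I would replace the gap by $\min_{j\le n}\operatorname{dist}(\lambda_n,\text{earlier eigenvalues})$, estimated via the resolvent bound of Lemma~\ref{lem:circle}.
\end{itemize}
Since the remark at the end of Lemma~\ref{lem:onestep} says $\delta$ depends only on $(L,K,g,\gamma,d)$, this defines $\varepsilon_n^*$ recursively and independently of $G$. Decreasing $\varepsilon^*_n$ further if necessary makes it decreasing and also ensures $\sum\|H_n\|$ is summable.

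\textbf{Limit.} The increments satisfy $\|A_n-(A_{n-1}\oplus\lambda_n)\|_p\le\gamma_n$, and similarly for $B_n$ and $W_n$. Hence $A_n\to A_G$ and $B_n\to B_G$ in $\mathcal S_p$, where
\[
A_G=\Lambda\text{-limit}, \qquad \sum_n|\lambda_n|^p=\sum_n n^{-2}<\infty,
\]
and $W_n\to W$ in norm with $\|W-I\|\le\sum\gamma_n\le1/4$. Moreover $[A_G,B_G]=\lim G^{(n)}=G$ in $\mathcal S_1$. The diagonal $\Lambda=\lim\Lambda_n$ has entries $\mu_n$ with $|\mu_n-\lambda_n|\le\sum_{j\ge n}\gamma_j\le e^{-(c+3)n^3}$. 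The estimates $\|We_n-e_n\|$ and $\|(W^{-1})^*e_n-e_n\|$ are handled the same way, using that the $n$-th column of $W_k$ is frozen up to $\sum_{j\ge n}\gamma_j$ once it is introduced. Now Lemma~\ref{lem:coupling} supplies $X,Y$ with $\|X\|_1,\|Y\|_1<\beta/2$ solving $CX-XA_G=E$ and $A_GY-YC=F$, which gives the block identity. The bounds $(**)$ follow from $\|C\|\le1$, $\|A_G\|\le5/4+\beta$, $\|C\oplus A_G\|_p\le M+3+\beta\le 3M+\beta$, and $\|B_G\|_p+\|X\|_1+\|Y\|_1<3\beta$.

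When $G=0$ there is no $D$. In that case take $A_G=\Lambda$ diagonal, with entries in $(11/10,5/4]$ on $\mathbb C^m$ and $\lambda_n$ on the $e_n$, set $B_G=0$ and $W=I$, and apply Lemma~\ref{lem:coupling} directly.
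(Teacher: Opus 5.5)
Your outline matches the paper's: seed from Lemma~\ref{lem:seed}, iterate Lemma~\ref{lem:onestep} with $\lambda=\lambda_n$ and $H_n=G^{(n)}-(G^{(n-1)}\oplus0)$, fix thresholds in advance through the chain $\|\mathcal R_{A_n,B_n}\|\le 1/(4\sqrt{\delta_n})$, pass to the limit, and finish with Lemma~\ref{lem:coupling}. However, the point you call the main obstacle is not actually settled.

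\emph{Independence of the gap from $C$.} The sequence $(\varepsilon_n^*)$ must be independent of $C$ as well as of $G$. The statement says so, and Lemma~\ref{lem:preparation} needs it: there $\varepsilon^*$ is used to build the tail bounds $\varepsilon_n'$, hence $\widehat D'$, and $C$ is obtained from $\widehat D'$ only afterwards. Your gap bullet allows a separation ``depending only on $C$ and $n$''. Your fallback is invalid. The resolvent bound of Lemma~\ref{lem:circle} concerns $\sigma(C)$, not the eigenvalues of the finite matrix $A_{n-1}$. Moreover, any estimate of $\operatorname{dist}(\lambda_n,\sigma(\Lambda_{n-1}))$ obtained that way would depend on $C$ and on the earlier $H_j$.

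Your first instinct, the fixed circles, is right, and it never fails. The radii $\rho_j=j^{-2/p}$ are strictly decreasing, so $|\lambda_i-\lambda_n|\ge\rho_i-\rho_{i+1}$ for $i<n$, whatever the arguments of the $\lambda_j$ (only those arguments depend on $C$). So fix $g_0=\operatorname{gap}(A_*)$ and $g_n=\min\{1/16,\,g_{n-1}/2,\,(\rho_n-\rho_{n+1})/2\}$, and impose $\gamma_n\le 2^{-n-2}g_n$. Then:
\begin{itemize}
\item the $i$th tail eigenvalue of $\Lambda_{n-1}$ stays within $g_i/4$ of $\lambda_i$;
\item the seed eigenvalues stay within $1/64$ of $\sigma(A_*)$;
\item Proposition~\ref{prop:spectral-gap} gives $\operatorname{gap}(\Lambda_{n-1}\oplus\lambda_n)\ge g_n$, with $g_n$ depending only on $D$ and $p$.
\end{itemize}

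\emph{The commutator lower bound.} The upper-left $m\times m$ corner of $G^{(n)}$ is not $D$. It is $D+\bigl(\operatorname{Tr}(P_0GP_0)-\operatorname{Tr}(P_nGP_n)\bigr)P_e$, which can have smaller norm than $D$. You need $\varepsilon_0^*\le\|D\|/4$ and then use $\|G^{(n)}\|\ge\|D\|-2\varepsilon_0^*\ge\|D\|/2$ as the lower bound.

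\emph{The constants do not deliver \eqref{eq:complementarybounds}.}
\begin{itemize}
\item With the interval $[6/5,5/4]$, of width $1/20$, Lemma~\ref{lem:seed} only gives $\|B_0\|_p\le 20(m-1)m^{1/2-1/p}\|D\|_p\le 40\sqrt3\|D\|_p<\tfrac52\beta$. So you cannot conclude $\|B_G\|_p+\|X\|_1+\|Y\|_1<3\beta$. The hypothesis $16\sqrt3\|D\|_p<\beta$ is calibrated to width $1/8$, i.e.\ the interval $[9/8,5/4]$, which still leaves $|\mu_i^h|\ge 9/8-1/64>11/10$.
\item The step $M+3+\beta\le 3M+\beta$ holds only for $M\ge 3/2$, while only $M\ge1$ is assumed. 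For $M=1$, $m=3$ and $p$ near $2$, $\|C\|_p+\|A_G\|_p$ is about $3.4$, which exceeds $3+\beta$, so the triangle inequality is too weak. You must add $p$-th powers over the orthogonal blocks:
\[
\bigl(M^p+m(5/4)^p+\zeta(2)\bigr)^{1/p}+\beta\le M\bigl(3+3(5/4)^p\bigr)^{1/p}+\beta<3M+\beta .
\]
\end{itemize}
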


\begin{proof}
Let $c=3(2M)^p$ as in Lemma~\ref{lem:circle}.
 Apply Lemma~\ref{lem:seed} with the interval $[9/8,5/4]$ to obtain a regular pair $(A_*,B_*)$ with $[A_*,B_*]=D$.  After a unitary change of coordinates on
$\mathbb C^m$, leaving $\ell^2(\mathbb N)$ fixed, we may assume
that $A_*$ is positive diagonal, with distinct entries
$\lambda_i^*\in[9/8,5/4]$ in an orthonormal basis $(e_i^h)_{i=1}^m$.
Moreover,
\[
 \|A_*\|_p\le\frac{5}{4}m^{1/p},\qquad
 \|B_*\|_p\le8(m-1)m^{1/2-1/p}\|D\|_p\leq 16\sqrt3\|D\|_p<\beta.
\]
Set
\[
 \begin{gathered}
 L_0=\|\mathcal R_{A_*,B_*}\|,\qquad
 K=3+\|A_*\|_p+\|B_*\|_p,\qquad
 g_0 =\operatorname{gap}(A_*),\\
 \rho_n=n^{-2/p},\qquad
 g_n=\min\left\{\frac{1}{16}, \frac{g_{n-1}}{2},
                         \frac{\rho_n-\rho_{n+1}}{2}\right\},\\
 \gamma_n=\frac{1}{2^{n+2}}\min\left\{
 \beta,g_n,
 e^{-(c+3)n^3}\right\}\quad(n\ge1).
 \end{gathered}
\]
The entries in the minimum are nonincreasing in $n$. Hence, for $n\ge1$,
\[
 \sum_{k=n}^{\infty}\gamma_k
 \le\frac{1}{4}\min\left\{
 \beta,\frac{g_0}{2},\frac{1}{16},\rho_n-\rho_{n+1},
 e^{-(c+3)n^3}\right\}.
\]

Starting with $L_0$, $\delta_0=1/(16L_0^2),$ choose $\delta_n$ recursively
by Lemma~\ref{lem:onestep}, using $\gamma_n$,
factor bound $K$,  right-inverse bound $1/(4\sqrt{\delta_{n-1}})$,
commutator lower bound $\|D\|/2$, and spectral-gap
lower bound $g_n$.
These choices depend only on $L_0,M,D,p,\beta$ and can
be made before $C$ and the points $\lambda_n$ are chosen.

Set
\[
 \varepsilon_n^*=
 \min\left\{\frac{1}{2^n},\frac{\|D\|}{4},
                  \frac{\delta_j}{2}:1\le j\le n+1\right\}
 \quad(n\ge0).
\]
This is a positive decreasing sequence tending to zero, chosen
before $C$ and its resolvent points.

Set
\[
 \widehat G_n=P_nGP_n-\tr(P_nGP_n)P_e.
\]
Then $\widehat G_0=D$, and Lemma~\ref{lem:trace-truncations} gives
\[
 \begin{gathered}
 \left\|\widehat G_n-
       \begin{pmatrix}\widehat G_{n-1}&0\\0&0\end{pmatrix}\right\|_p
 \le2\varepsilon_{n-1}^*\le\delta_n\quad(n\ge1),\\
 \|G-\widehat G_n\|_1\le2\varepsilon_n^*,\qquad
 \|\widehat G_n\|\ge\|D\|-2\varepsilon_0^*
                         \ge\frac{\|D\|}{2}.
 \end{gathered}
\]
The last estimate uses
$\|\widehat G_n-D\|\le\|\widehat G_n-D\|_1\le2\varepsilon_0^*$,
with $D$ extended by zero.

Now fix $C$. Lemma~\ref{lem:circle} supplies points $\lambda_n$ with
\[
 |\lambda_n|=\rho_n,\qquad
 \|(C-\lambda_nI)^{-1}\|\le e^{cn^3}.
\]
We construct regular pairs $[A_n,B_n]=\widehat G_n$ and invertible
$W_n$ such that $W_n^{-1}A_nW_n=\Lambda_n$ is diagonal. Start with
\[
 A_0=A_*,\qquad B_0=B_*,\qquad W_0=I_m,\qquad\Lambda_0=A_*.
\]
For $n=1$, the eigenvalues of $A_*$ are separated by $g_0$ and
have distance at least $1/8$ from $\lambda_1$. Thus
Lemma~\ref{lem:onestep} applies to $(A_*,B_*)$, $\lambda_1$ and
\[
 H=\widehat G_1-\begin{pmatrix}D&0\\0&0\end{pmatrix}.
\]
It gives $A_1,B_1,W_1,\Lambda_1$, with right-inverse norm at most
$1/(4\sqrt{\delta_1})$ and the four increments below bounded by $\gamma_1$.

For $n>1$, suppose the construction has been carried out for
$j<n$, with right-inverse norms at most \[
\|\mathcal R_{A_j,B_j}\|
\le\frac1{4\sqrt{\delta_j}}.
\] and
\[
 \begin{gathered}
 \left\|A_j-\begin{pmatrix}A_{j-1}&0\\0&\lambda_j\end{pmatrix}\right\|_p,
 \quad
 \left\|B_j-\begin{pmatrix}B_{j-1}&0\\0&0\end{pmatrix}\right\|_p
 \le\gamma_j,\\
 \left\|W_j-\begin{pmatrix}W_{j-1}&0\\0&1\end{pmatrix}\right\|,
 \quad
 \left\|\Lambda_j-
       \begin{pmatrix}\Lambda_{j-1}&0\\0&\lambda_j\end{pmatrix}\right\|
 \le\gamma_j.
 \end{gathered}
\]
These matrices use the decomposition
$P_j\mathcal H_1=P_{j-1}\mathcal H_1\oplus\mathbb Ce_j$.

We now proceed to justify the inductive step. It remains to show that
\begin{equation*}
\operatorname{gap}
\begin{pmatrix}\Lambda_{n-1}&0\\0&\lambda_n\end{pmatrix}
\ge g_n,\qquad n\geq 1.
\end{equation*}
The above inequality holds for $n=1$ by the preceding argument.
For $n\geq 2$, suppose that the inequality holds for $1\le k\le n-1$.
The obtained bound establishes
\[
\left\|\Lambda_{n-1}-
\begin{pmatrix}\Lambda_{n-2}&0\\0&\lambda_{n-1}\end{pmatrix}\right\|
\le \gamma_{n-1}
\le \frac{1}{4}g_{n-1}.
\]
Thus Proposition~\ref{prop:spectral-gap} applies and yields
\[
\operatorname{gap}(\Lambda_{n-1})
\ge
g_{n-1}
-2\gamma_{n-1}
\geq g_n.
\]

We next estimate the distance from $\lambda_n$ to
$\sigma(\Lambda_{n-1})$. Write
\[
\Lambda_{n-1}
=\operatorname{diag}(H^{(n-1)},\mu_1^{(n-1)},\ldots,\mu_{n-1}^{(n-1)}),
\qquad H^{(n-1)}\in M_m(\mathbb C).
\]
Summing the diagonal increments directly yields
\[
\|H^{(n-1)}-\Lambda_0\|
\le\sum_{k=1}^{n-1}\gamma_k
\le\frac{g_1}{4},
\qquad
|\mu_i^{(n-1)}-\lambda_i|
\le\sum_{k=i}^{n-1}\gamma_k
\le\frac{g_i}{4}.
\]
Note that $\sigma(\Lambda_0)\subset [9/8,5/4]$, while
$|\lambda_n|\le1$. Moreover, for $1\le i<n$,
\[
|\lambda_i-\lambda_n|
\ge\rho_i-\rho_n
\ge\rho_i-\rho_{i+1}
\ge2g_i.
\]
Consequently,
\[
\begin{aligned}
\operatorname{dist}\bigl(\lambda_n,\sigma(\Lambda_{n-1})\bigr)
&=
\min\left\{
\operatorname{dist}\bigl(\lambda_n,\sigma(H^{(n-1)})\bigr),
\min_{1\le i<n}|\mu_i^{(n-1)}-\lambda_n|
\right\}\\
&\ge
\min\left\{
\frac18-\frac{g_1}{4},
\min_{1\le i<n}
\left|2g_i-\frac{g_i}{4}\right|
\right\}\\
&\geq g_n.
\end{aligned}
\]
Here we used $g_1\le1/16$ and the monotonicity of $(g_j)$.

Now we obtain
\[
\operatorname{gap}
\begin{pmatrix}\Lambda_{n-1}&0\\0&\lambda_n\end{pmatrix}
=
\min\left\{
\operatorname{gap}(\Lambda_{n-1}),
\operatorname{dist}\bigl(\lambda_n,\sigma(\Lambda_{n-1})\bigr)
\right\}
\ge g_n.
\]

Also, telescoping the increments yields
\[
 \begin{aligned}
 \|A_{n-1}\|_p
 &\le\left(\|A_*\|_p^p+\sum_{j=1}^{n-1}|\lambda_j|^p\right)^{1/p}
       +\sum_{j=1}^{n-1}\gamma_j<K,\\
 \|B_{n-1}\|_p&\le\|B_*\|_p+\sum_{j=1}^{n-1}\gamma_j<K,\\
 \|W_{n-1}-I\|&\le\sum_{j=1}^{n-1}\gamma_j<\frac{1}{4}.
 \end{aligned}
\]
Here $\sum_j|\lambda_j|^p=\zeta(2)<2$ and
$\sum_j\gamma_j\le\beta/4$. The commutator norm is at least
$\|D\|/2$, and the right-inverse norm is at most $1/(4\sqrt{\delta_{n-1}})$.
We may therefore apply Lemma~\ref{lem:onestep} with
\[
 H=\widehat G_n-
       \begin{pmatrix}\widehat G_{n-1}&0\\0&0\end{pmatrix},
 \qquad\tr H=0,\qquad\|H\|_p\le\delta_n.
\]
This gives the next pair and diagonalization, with right-inverse norm
at most $1/(4\sqrt{\delta_{n}})$ and all four increments at most $\gamma_n$.

Extend $A_n,B_n,\Lambda_n$ by zero and $W_n$ by the identity on
$(P_n\mathcal H_1)^\perp$. For $N>n$, the same estimates give
\[
 \begin{aligned}
 \left\|\begin{pmatrix}A_N&0\\0&0\end{pmatrix}
           -\begin{pmatrix}A_n&0\\0&0\end{pmatrix}\right\|_p
 &\le\big(\sum_{j=n+1}^N|\lambda_j|^p\big)^{1/p}
       +\sum_{j=n+1}^N\gamma_j,\\
 \left\|\begin{pmatrix}B_N&0\\0&0\end{pmatrix}
           -\begin{pmatrix}B_n&0\\0&0\end{pmatrix}\right\|_p
 &\le\sum_{j=n+1}^N\gamma_j,\\
 \left\|\begin{pmatrix}W_N&0\\0&I\end{pmatrix}
           -\begin{pmatrix}W_n&0\\0&I\end{pmatrix}\right\|
 &\le\sum_{j=n+1}^N\gamma_j.
 \end{aligned}
\]
Each block matrix with subscript $j$ acts on
$P_j\mathcal H_1\oplus(P_j\mathcal H_1)^\perp$.
The right sides tend to zero uniformly as $n\to\infty$.

Define
$$
A_G=\lim_{n\to \infty}\begin{pmatrix}A_{n}&\\&0\end{pmatrix},\quad
B_G=\lim_{n\to \infty}\begin{pmatrix}B_{n}&\\&0\end{pmatrix},
\quad W=\lim_{n\to \infty}\begin{pmatrix}W_{n}&\\&I\end{pmatrix}.
$$
Then $A_G,B_G \in \mathcal{S}_p(\mathcal{H}_1)$, and $\|W-I\|\leq1/4$, so
$W$ is invertible and $\|W\|,\|W^{-1}\|<2$.
In comparisons on $\mathcal H_1$, retain the same notation for the
extended finite matrices. H\"older's inequality gives
\[
 \|[A_n,B_n]-[A_G,B_G]\|_{p/2}
 \le2\|A_n-A_G\|_p\|B_n\|_p
     +2\|A_G\|_p\|B_n-B_G\|_p\longrightarrow0.
\]
Since $\widehat G_n\to G$ in trace norm, it also converges in
$\mathcal{S}_{p/2}$, and therefore
$$[A_G,B_G]=G.$$

It remains to verify the assumptions on $A_G$ and $W$ in
Lemma~\ref{lem:coupling}.
Since $W$ is invertible, the inverse map is also norm continuous. Hence
\[
 \begin{pmatrix}\Lambda_n&0\\0&0\end{pmatrix}
 =\begin{pmatrix}W_n^{-1}&0\\0&I\end{pmatrix}
 \begin{pmatrix}A_n&0\\0&0\end{pmatrix}
 \begin{pmatrix}W_n&0\\0&I\end{pmatrix}
 \longrightarrow W^{-1}A_GW=:\Lambda.
\]
Each $\Lambda_n$ is diagonal in the fixed basis
$e_1^h,\ldots,e_m^h,e_1,e_2,\ldots$, so $\Lambda$ is diagonal
in this basis. Denote its diagonal entries by
$\mu_1^h,\ldots,\mu_m^h,\mu_1,\mu_2,\cdots$. Then we have
\[
\left\|
\Lambda-
\begin{pmatrix}
\Lambda_{n-1}&0&0&\cdots\\
0&\lambda_n&0&\cdots\\
0&0&\lambda_{n+1}&\ddots\\
\vdots&\vdots&\ddots&\ddots
\end{pmatrix}
\right\|
\le\sum_{k=n}^{\infty}\gamma_k.
\]
Since $\sigma(\Lambda_0)\subset [9/8,5/4]$, for $1\le i\le m$ we have
\[
 |\mu_i^h|\ge\frac{9}{8}-\sum_{k=1}^{\infty}\gamma_k
 \ge\frac{9}{8}-\frac{1}{64}>\frac{11}{10}.
\]
For $n\ge1$,
\[
|\mu_n-\lambda_n|
\le\sum_{k=n}^{\infty}\gamma_k\leq\frac{1}{4}e^{-(c+3)n^3}.
\]

 Using the fact that
 \[
 W^{-1}-\begin{pmatrix}W_{n-1}&0\\0&I\end{pmatrix}^{-1}
 =W^{-1}\left(\begin{pmatrix}W_{n-1}&0\\0&I\end{pmatrix}-W\right)
 \begin{pmatrix}W_{n-1}&0\\0&I\end{pmatrix}^{-1},
\]
we obtain
\[
 \begin{aligned}
 \|We_n-e_n\|&\le\sum_{k=n}^{\infty}\gamma_k
                       \le e^{-(c+3)n^3},\\
 \|(W^{-1})^*e_n-e_n\|&\le4\sum_{k=n}^{\infty}\gamma_k
                       \le e^{-(c+3)n^3}.
 \end{aligned}
\]
This verifies all the required assumptions on $A_G$ and $W$.

If $G=0$, choose the same resolvent points $\lambda_n$
and set
\[
 A_G=\frac98 I_{\mathbb C^m}\oplus
      \operatorname{diag}(\lambda_1,\lambda_2,\ldots),
 \qquad B_G=0,\qquad W=I.
\]
Then $[A_G,B_G]=G$, and the factor bounds and the assumptions of
Lemma~\ref{lem:coupling} hold directly.

Apply Lemma~\ref{lem:coupling} to $A_G$ and $E,F$.
It gives $X,Y$ with $\|X\|_1,\|Y\|_1<\beta/2$
such that
$$
CX-XA_G=E,\qquad  A_GY-YC=F.$$ Thus,
\[
 \left[\begin{pmatrix}C&0\\0&A_G\end{pmatrix},
 \begin{pmatrix}0&X\\Y&B_G\end{pmatrix}\right]
 =\begin{pmatrix}0&E\\F&G\end{pmatrix}
 \begin{matrix}\mathcal H_0\\\mathcal H_1\end{matrix}.
\]
In the nonzero case,
$\|B_G\|_p\le\|B_*\|_p+\sum_{n\ge1}\gamma_n<2\beta$;
in the zero case $B_G=0$. Thus, in both cases,
\begin{align*}
 \left\|\begin{pmatrix}0&X\\Y&B_G\end{pmatrix}\right\|_p
 &\le\|B_G\|_p+\|X\|_1+\|Y\|_1<3\beta.
\end{align*}

Since $M\geq 1$, $\zeta(2)<2$, we have
\[
 \left\|\begin{pmatrix}C&0\\0&A_G\end{pmatrix}\right\|
 \le \max\{1,\frac{5}{4}+\sum_{n=1}^{\infty}\gamma_n\}<\frac{5}{4}+\beta,
 \]
 \begin{align*}
     \left\|\begin{pmatrix}C&0\\0&A_G\end{pmatrix}\right\|_p &
     \leq \left(M^p+m\left(\frac{5}{4}\right)^p+\zeta(2)\right)^{1/p}+\beta \\
      & \leq M(3+m(\frac{5}{4})^p)^{1/p}+\beta < 3M+\beta.
   \end{align*}
Indeed, $m\le3$ and monotonicity of finite-dimensional $\ell_p$-norms for $p>2$ give
\[
 \bigl(3+m(5/4)^p\bigr)^{1/p}
 \le\bigl(3+3(5/4)^p\bigr)^{1/p}
 \le\left(3+3\left(\frac54\right)^2\right)^{1/2}
 =\frac{\sqrt{123}}4<3.
\]

\end{proof}

\section{Main results}

We combine the decomposition of Section~\ref{sec:geometry} with
Lemma~\ref{lem:approximation} to obtain the preparation lemma below.
The remaining compression has an infinite-dimensional reducing null
subspace, so Theorem~\ref{lem:stable} supplies its commutator representation.
Combining the two representations proves Theorem~\ref{thm:main}
with the stated norm estimates.
\begin{lemma}\label{lem:preparation}
Let $T$ be a compact operator of infinite rank with $\|T\|=1$.
Fix $p>2$, $M\ge1$, $\gamma>0$ and $0<\beta\le 1/8$.
There are an orthogonal decomposition
$\mathcal H=\mathcal H_0\oplus\mathcal H_1$ and an invertible
operator $S$ with $\|S\|\|S^{-1}\|<1+\gamma$ such that
\[
 S^{-1}TS=
 \begin{pmatrix}
 \widehat D&E\\
 F&G
 \end{pmatrix}
 \begin{matrix}
 \mathcal H_0\\
 \mathcal H_1
 \end{matrix},
\]
where $\widehat D=P_{\mathcal H_0}S^{-1}TS|_{\mathcal H_0}$ has an infinite-dimensional reducing null subspace.

Furthermore, for every $C\in \mathcal{S}_p(\mathcal H_0)$ satisfying
$\|C\|\le1$ and $\|C\|_p\le M$, there are
$A_G,B_G\in \mathcal{S}_p(\mathcal H_1)$,
$X\in \mathcal{S}_1(\mathcal H_1,\mathcal H_0)$ and
$Y\in \mathcal{S}_1(\mathcal H_0,\mathcal H_1)$ satisfying
\eqref{eq:complementarybounds} and
\begin{equation*}\label{eq:preparedidentity}
 \left[
 \begin{pmatrix}
 C&0\\
 0&A_G
 \end{pmatrix},
 \begin{pmatrix}
 0&X\\
 Y&B_G
 \end{pmatrix}
 \right]
 =
 \begin{pmatrix}
 0&E\\
 F&G
 \end{pmatrix}
 \begin{matrix}
 \mathcal H_0\\
 \mathcal H_1
 \end{matrix}.
\end{equation*}
\end{lemma}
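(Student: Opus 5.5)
The plan is to feed Lemma~\ref{lem:shear} into Lemma~\ref{lem:approximation}. The one subtle point is the order of choices: the sequence $(\varepsilon_n^*)$ of Lemma~\ref{lem:approximation} depends on the seed block $D$. That block is built from $G_1$ and $\lambda$, and these come out of the shear construction. Remark~\ref{rem:sheartail} resolves this: it lets us fix $v$ and the first local block, then shrink the tail without changing them.

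\textbf{Step 1: the first construction.} Apply Lemma~\ref{lem:shear} to $T$ with the given $\gamma$ and an auxiliary sequence $(\varepsilon_n)$. Choose $\varepsilon_n$ so small that the resulting bounds $\|E_n\|,\|F_n\|\le2\varepsilon_n$ already give \eqref{eq:EFhyp}. The natural choice is $\varepsilon_n\le\tfrac12\min\{\eta/4,e^{-(n+1)^4}\}$, with $\eta=\eta(p,M,\beta)$ from Lemma~\ref{lem:coupling}. Set
\[
\mathcal H_1=\mathbb Cv\oplus\mathcal H_{1,1}\oplus\bigoplus_{n\ge2}\mathcal H_{1,n}.
\]
The head $\mathbb C^m=\mathbb Cv\oplus\mathcal H_{1,1}$ has $m\le3$, and $e=v$. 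Index the remaining blocks, each of dimension at most $2$, into an orthonormal sequence $(e_n)$ after choosing bases in each $\mathcal H_{1,n}$. Then $\ell^2(\mathbb N)=\bigoplus_{n\ge2}\mathcal H_{1,n}$.

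The norms $\|Ee_k\|$ and $\|F^*e_k\|$ are bounded by the block norms $\|E_n\|,\|F_n\|$. Since at most two basis vectors come from each block, $k\le 2n$, so the $e^{-k^4}$ decay is available. The global bounds $\|E\|,\|F\|<\eta$ are handled the same way: $E_v,F_v$ are controlled by $\gamma\|T\|$, so I shrink $\gamma$ (allowed, since a smaller $\gamma$ is a stronger conclusion). The block $E_n$ piece is controlled by $\sum\|E_n\|^2$ via orthogonality of the $\mathcal H_{1,n}$.

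\textbf{Step 2: the seed block.} Set $D:=P_0GP_0-\tr(P_0GP_0)P_v$, which equals $\lambda\oplus G_1$ with its trace moved onto $v$. We need $D\neq0$ and $16\sqrt3\|D\|_p<\beta$. Smallness comes from $\|G_1\|_1\le4\varepsilon_1$ and $|\lambda|\le\sum\|G_n\|_1$. Nonvanishing holds because $G_1$ compresses $\widetilde T$ to $\mathcal H_{1,1}\ni\widetilde Tf_1=z_1\ne0$, and $G_1$ acts as a nonzero off-diagonal term. This is plausible but may need an adjustment. If $D=0$ happens, I fall back on the ``$G=0$'' clause of Lemma~\ref{lem:approximation}, or move to the next block. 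Note that $D$ changes when the tail changes, because $\lambda$ does. So I fix the split differently: move the trace entirely onto the $v$-entry via $P_e$. Then $D$ depends only on $G_1$ and $\tr G_1$, since $\tr D=0$ forces the $v$-entry to be $-\tr G_1$. Hence $D$ is determined by data preserved under Remark~\ref{rem:sheartail} with $m=1$.

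\textbf{Step 3: rechoose the tail.} Obtain $(\varepsilon_n^*)$ from Lemma~\ref{lem:approximation} for this $D$. Use Remark~\ref{rem:sheartail} to redo the construction with $\varepsilon_n'=\min\{\varepsilon_n,\varepsilon^*_{n}/C\}$ for $n>1$, keeping $v$, $f_1$, $\mathcal H_{1,1}$ and $G_1$. Then $\|G-P_nGP_n\|_1\le\sum_{k>n}4\varepsilon_k'$, which is at most $\varepsilon_n^*$ once the constant and indexing are adjusted. The trace-adjusted first compression is still $D$, and \eqref{eq:EFhyp} persists because the bounds only got smaller. The final $S$ still satisfies $\|S\|\|S^{-1}\|<1+\gamma$, and $\widehat D$ has an infinite-dimensional reducing null subspace, both by Lemma~\ref{lem:shear}.

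Finally, for any admissible $C$, Lemma~\ref{lem:approximation} yields $A_G,B_G,X,Y$ with the block identity and \eqref{eq:complementarybounds}. I expect the main obstacle to be Step~2, namely the order-of-choice issue that forces the $P_e$-normalization to make $D$ tail-independent, together with ensuring $D\ne0$.
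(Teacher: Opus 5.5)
Your overall plan matches the paper's: Lemma~\ref{lem:shear} with $\min\{\gamma,\eta/2\}$ and rapidly decaying $\varepsilon_n$, a seed $D$ cut from the head, $(\varepsilon_n^*)$ from Lemma~\ref{lem:approximation}, then Remark~\ref{rem:sheartail} to shrink the tail with the head frozen. However, the step $D\neq0$ is a genuine gap, and your argument for it is wrong.

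The vector $f_1$ lies in $\mathcal H_0$, orthogonal to $\mathcal H_{1,1}$. So $\widetilde Tf_1=z_1\neq0$ is recorded in the coupling block $F_1$, not in $G_1=P_{\mathcal H_{1,1}}\widetilde T|_{\mathcal H_{1,1}}$. Nothing prevents $G_1=0$. For example, suppose $\lambda_1=0$ (so $N_1=0$), $T^2=0$ and $\langle TT^*f_1,Tf_1\rangle=0$. Then $T$ maps $\mathcal H_{1,1}=\operatorname{span}\{Tf_1,T^*f_1\}$ into $\mathcal H_{1,1}^\perp$.

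No renormalization is needed, since $\lambda$ never enters: $P_0GP_0-\tr(P_0GP_0)P_v$ already equals $(-\tr G_1)\oplus G_1$. Hence $D=0$ exactly when $G_1=0$. The $G=0$ clause only covers the case where every $G_n$ vanishes. When $G_1=0\neq G$ you must do what the paper does:
\begin{itemize}
\item take $k=\min\{n:G_n\neq0\}$;
\item use the head $\mathbb Cv\oplus\mathcal H_{1,k}$ with $D=(-\tr G_k)\oplus G_k$;
\item apply Remark~\ref{rem:sheartail} with $m=k$, not $m=1$;
\item move $\mathcal H_{1,i}$, $i<k$, into $\mathcal H_0'$.
\end{itemize}
The last move needs an argument, which the block form of Lemma~\ref{lem:shear} supplies. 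We have $G_i=0$, and all blocks between distinct $\mathcal H_{1,j}$'s and $\mathbb Cv$ vanish. So these spaces have zero blocks to and from $\mathcal H_1'$. Consequently $E',F'$ still satisfy \eqref{eq:EFhyp}, and the head still has $m\le3$. Also $\widehat D'$ keeps the infinite-dimensional reducing null subspace spanned by the $f_n'$, $n>k$.

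Two quantitative slips also need repair.
\begin{itemize}
\item The bound $2\varepsilon_n\le e^{-(n+1)^4}$ is too weak. In your enumeration a basis vector of $\mathcal H_{1,n}$ can have index up to $2n-2$. So you need something like $2\varepsilon_n\le e^{-(2n)^4}$, as in the paper.
\item In Step~3, $\operatorname{ran}P_n$ is only guaranteed to contain about the first $n/2$ tail blocks. Moreover, the tail sum of a decreasing sequence is not controlled by its $n$-th term. So $\varepsilon_n'=\min\{\varepsilon_n,\varepsilon_n^*/C\}$ does not give $\|G-P_nGP_n\|_1\le\varepsilon_n^*$. A doubled index with geometric weights does. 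For instance, $\varepsilon_n'=\min\{\varepsilon_n,2^{k-n-4}\varepsilon^*_{2(n-k)}\}$ for $n>k$ gives $\|G'-P_n'G'P_n'\|_1\le 8\sum_{2j>n}2^{-j-4}\varepsilon^*_{2j}<\varepsilon_n^*$.
\end{itemize}
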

\begin{proof}
Take $\eta=\eta(p,M,\beta)$ from Lemma~\ref{lem:coupling}, as used in Lemma~\ref{lem:approximation}.
Choose a positive decreasing sequence $(\varepsilon_n)$ such that
\[
 \sum_{n=1}^{\infty}\varepsilon_n<\frac{\eta}{4},\qquad
 2\varepsilon_n\le e^{-(2n)^4},\qquad
 8\varepsilon_1<\frac{\beta}{16\sqrt3}.
\]
Apply Lemma~\ref{lem:shear} for $\min\{\gamma,\eta/2\}$ (in place of $\gamma$) and  $(\varepsilon_n)$. It gives
\[
 S^{-1}TS=
  \begin{pmatrix}
 \widehat D&E_v&E_1&E_2&\cdots\\
 F_v&\lambda&0&0&\cdots\\
 F_1&0&G_1&0&\cdots\\
 F_2&0&0&G_2&\cdots\\
 \vdots&\vdots&\vdots&\vdots&\ddots
 \end{pmatrix}
 \begin{matrix}
 \mathcal H_0\\\mathbb Cv\\\mathcal H_{1,1}\\\mathcal H_{1,2}\\\vdots
 \end{matrix}
 \,\triangleq\,\begin{pmatrix}\widehat D&E\\F&G\end{pmatrix}
 \begin{matrix}\mathcal H_0\\\mathcal H_1\end{matrix},
\]
where $\lambda=-\sum_n\tr G_n$,
$1\le\dim\mathcal H_{1,n}\le2$, and
$\widehat D$ has an infinite-dimensional reducing null subspace.

Moreover,
\[
 \|G_n\|_1\le4\varepsilon_n,\qquad
 \|E_n\|\le2\varepsilon_n,\qquad \|F_n\|\le2\varepsilon_n,
\]
\[
\|S\|\|S^{-1}\|<1+\min\{\gamma,\eta/2\},
 \qquad \|E_v\|<\min\{\gamma,\eta/2\},
 \qquad \|F_v\|<\min\{\gamma,\eta/2\}.
\]
Then
 \begin{align*}
 \|E\|&\le\|E_v\|+\sum_{n=1}^{\infty}\|E_n\|
       <\frac{\eta}{2}+2\sum_{n=1}^{\infty}\varepsilon_n<\eta,\\
 \|F\|&\le\|F_v\|+\sum_{n=1}^{\infty}\|F_n\|
       <\frac{\eta}{2}+2\sum_{n=1}^{\infty}\varepsilon_n<\eta.
 \end{align*}

If every $G_n$ is zero, then $G=0$. Take $\mathbb Cv$ as the
one-dimensional first summand of $\mathcal H_1$, and enumerate bases
of the spaces $\mathcal H_{1,j}$ successively as $(e_n)$.
For $e_n\in\mathcal H_{1,j}$, we have $n\le2j$, and hence
\[
 \|Ee_n\|,\ \|F^*e_n\|
 \le \min\{\eta,2\varepsilon_j\}\le \min\{\eta,e^{-(2j)^4}\}\le \min\{\eta,e^{-n^4}\}.
\]
Thus the $G=0$ case of Lemma~\ref{lem:approximation} applies to this
decomposition for every admissible $C$.

Suppose that $G\neq 0$, and let
\[
 { k}=\min\{n\ge1:G_n\ne0\}.
\]
Set
\[
 \mathcal H_{\mathrm h}=\mathbb Cv\oplus\mathcal H_{1,{k}},
 \qquad m=\dim\mathcal H_{\mathrm h},
 \qquad
 D=
 \begin{pmatrix}
 -\tr G_{ k}&0\\
 0&G_{ k}
 \end{pmatrix}
 \begin{matrix}
 \mathbb Cv\\
 \mathcal H_{1,{ k}}
 \end{matrix}.
\]
Then $2\le m\le3$, $D\ne0$, $\tr D=0$, and
\[
 \begin{aligned}
 \|D\|_p
 &\le\|D\|_1
 =|\tr G_{ k}|+\|G_{ k}\|_1\\
 &\le2\|G_{ k}\|_1
 \le8\varepsilon_{ k}
 \le8\varepsilon_1
 <\frac{\beta}{16\sqrt3}.
 \end{aligned}
\]
Now Lemma~\ref{lem:approximation} supplies a positive decreasing sequence $(\varepsilon_n^*)$ for this fixed $D$.

Define a smaller decreasing sequence $(\varepsilon_n')$ by
\[
 \varepsilon_n'=
 \begin{cases}
    \varepsilon_n, & \mbox{if } n\le k\\
   \min\{
 \varepsilon_{n},\,2^{k-n-4}\varepsilon_{2(n-k)}^*\}, & \mbox{if } n>k.
 \end{cases}
 \]
According to Remark~\ref{rem:sheartail}, we can apply Lemma~\ref{lem:shear} for $\min\{\gamma,\eta/2\}$ (in place of $\gamma$) and $(\varepsilon_n')$. Denote the resulting similarity, vectors, spaces, and blocks by \(S',f_n',\mathcal H_{1,n}',G_n'\), with
\[
 \|S'\|\|S'^{-1}\|<1+\gamma,
\qquad
(f_i',\mathcal H_{1,i}',G_i')=(f_i,\mathcal H_{1,i},G_i)
\quad(i\le k).
\]
The fixed matrix \(D\) remains unchanged.

Let
\[
 \mathcal H_{\mathrm t}'
 =\bigoplus_{j\ge1}\mathcal H_{1,k+j}',
 \qquad
 \mathcal H_1'=\mathcal H_{\mathrm h}\oplus\mathcal H_{\mathrm t}',
 \qquad
 \mathcal H_0'=(\mathcal H_1')^\perp.
\]
Define the new compressions by
\[
 S'^{-1}TS'
 =
 \begin{pmatrix}
 \widehat D'&E'\\
 F'&G'
 \end{pmatrix}
 \begin{matrix}
 \mathcal H_0'\\
 \mathcal H_1'
 \end{matrix}.
\]
The preceding block calculation gives
\[
 G'=
 \begin{pmatrix}
 D-(\tr H')P_v&0\\
 0&H'
 \end{pmatrix}
 \begin{matrix}
 \mathcal H_{\mathrm h}\\
 \mathcal H_{\mathrm t}'
 \end{matrix},
 \qquad
 H'=
 \begin{pmatrix}
 G_{k+1}'&0&\cdots\\
 0&G_{k+2}'&\ddots\\
 \vdots&\ddots&\ddots
 \end{pmatrix}
 \begin{matrix}
 \mathcal H_{1,k+1}'\\
 \mathcal H_{1,k+2}'\\
 \vdots
 \end{matrix}.
\]
In particular,
\[
\|G_{k+j}'\|_1\leq  4\varepsilon_{k+j}'\leq  2^{-j-2}\varepsilon_{2j}^*,
\]\[
 \|H'\|_1
 =\sum_{j=1}^{\infty}\|G_{k+j}'\|_1
 \le\sum_{j=1}^{\infty}2^{-j-2}\varepsilon_{2j}^*<\infty,
\]

For $n>k$, Lemma~\ref{lem:shear} gives orthonormal vectors
$f_n'\in\mathcal H_0'$ such that
\[
 S'^{-1}TS'f_n',\quad (S'^{-1}TS')^*f_n'
 \in\mathcal H_{1,n}'\subseteq\mathcal H_1'.
\]
Consequently, $\widehat D'$ has an infinite-dimensional reducing null subspace.

We verify the hypotheses of Lemma~\ref{lem:approximation}.
For each $j\ge1$, choose an orthonormal basis of
$\mathcal H_{1,k+j}'$. Enumerate these bases successively
as $(e_n)_{n\ge1}$.
For $n\ge0$, let $P_n'$ project onto
\[
 \mathcal H_{\mathrm h}
 \oplus\operatorname{span}\{e_1,\ldots,e_n\}.
\]
Thus $P_0'$ projects onto $\mathcal H_{\mathrm h}$ and
$\operatorname{rank}P_n'=m+n$.
Regard $D,P_v$ and the blocks $G_{k+j}'$ as operators on
$\mathcal H_1'$ by zero extension. The matrix of $G'$ gives
\[
 \begin{aligned}
 \tr G'
 &=\tr D-\tr H'+\tr H'=0,\\
 P_0'G'P_0'-\tr(P_0'G'P_0')P_v
 &=D-(\tr H')P_v+(\tr H')P_v=D.
 \end{aligned}
\]

Since each tail block has dimension at most two, we have
\[
 P_n'G_{k+j}'P_n'=G_{k+j}'
 \qquad(2j\le n).
\]
Consequently, for every $n\ge0$,
\[
 \begin{aligned}
 \|G'-P_n'G'P_n'\|_1
 &\le
 \sum_{2j>n}
 \|G_{k+j}'-P_n'G_{k+j}'P_n'\|_1\\
 &\le
 2\sum_{2j>n}\|G_{k+j}'\|_1\\
 &\le
 8\sum_{2j>n}
       2^{-j-4}\varepsilon_{2j}^*\\
 &\le
 8\varepsilon_n^*\sum_{j\ge1}2^{-j-4}
 <\varepsilon_n^*.
 \end{aligned}
\]

The spaces moved into $\mathcal H_0'$ have zero blocks to
$\mathcal H_1'$. Thus the coupling estimates from
Lemma~\ref{lem:shear} give
\[
 \|E'\|,\ \|F'\|
 \le\frac{\eta}{2}+2\sum_{i\ge k}\varepsilon_i'
 <\eta.
\]
If $e_n\in\mathcal H_{1,k+j}'$, then $n\le2j$, and
\[
 \|E'e_n\|,\ \|(F')^*e_n\|\le2\varepsilon_{k+j}'
\le2\varepsilon_{k+j}
 \le e^{-(2(k+j))^4}
 \le e^{-n^4}.
\]
This verifies the required conditions on $G',E',F'$.

The spaces and $S'$ have been fixed independently of $C$.
Now let $C\in \mathcal{S}_p(\mathcal H_0')$ satisfy
$\|C\|\le1$ and $\|C\|_p\le M$.
Apply Lemma~\ref{lem:approximation} to
$G',E',F'$ with first summand
$\mathcal H_{\mathrm h}\cong\mathbb C^m$ and $e=v$.
There are operators
\[A_{G'},B_{G'}\in \mathcal{S}_p(\mathcal H_1'),\qquad
 X\in \mathcal{S}_1(\mathcal H_1',\mathcal H_0'),
 \qquad
 Y\in \mathcal{S}_1(\mathcal H_0',\mathcal H_1'),
\]
such that
\[
 \left[
 \begin{pmatrix}
 C&0\\
 0&A_{G'}
 \end{pmatrix},
 \begin{pmatrix}
 0&X\\
 Y&B_{G'}
 \end{pmatrix}
 \right]
 =
 \begin{pmatrix}
 0&E'\\
 F'&G'
 \end{pmatrix}
 \begin{matrix}
 \mathcal H_0'\\
 \mathcal H_1'
 \end{matrix}
 =
 S'^{-1}TS'-
 \begin{pmatrix}
 \widehat D'&0\\
 0&0
 \end{pmatrix}.
\]
Rewriting in the original notation yields the desired result.
\end{proof}

\begin{proof}[{\bf Proof of Theorem~\ref{thm:main}}]
Let $r>2$ and let $p,q\in(0,\infty)$ satisfy
$r^{-1}=p^{-1}+q^{-1}.$
Without loss of generality, we assume $p\le q$.

If $T$ has finite rank, then $(\operatorname{ran}T+\operatorname{ran}T^*)^\perp$ is an
infinite-dimensional reducing null subspace. Theorem~\ref{lem:stable} gives
\[
 T=[A,B],\qquad A\in \mathcal{S}_p(\mathcal H),\quad B\in \mathcal{S}_q(\mathcal H),
\]
with
\[
 \|A\|\|B\|\le4\|T\|,
 \qquad
 \|A\|_p\|B\|_q
 \le4\zeta(r-1)^{1/r}\|T\|_r.
\]

Now suppose that $T$ has infinite rank and $\|T\|=1$. Set
\[
 M_p=\zeta(r-1)^{1/p}\|T\|_r^{r/p},
 \qquad
 M_q=\zeta(r-1)^{1/q}\|T\|_r^{r/q}.
\]
Then $M_p,M_q\ge1$ and
$
 M_p M_q=\zeta(r-1)^{1/r}\|T\|_r.
$
Choose $\gamma>0$ and $0<\beta\le1/8$ sufficiently small that
\[
 (1+\gamma)^2
 \left(\frac54+\beta\right)
 \bigl(4(1+\gamma)+3\beta\bigr)<6
\]
and
\[
 (1+\gamma)^2
 (3M_p+\beta)
 \bigl(4(1+\gamma)M_q+3\beta\bigr)<16M_p M_q.
\]
Such a choice is possible because the left-hand sides tend to
$5$ and $12M_p M_q$, respectively, as $\gamma,\beta\to 0$.

Apply Lemma~\ref{lem:preparation} with $p,M_p,\gamma,\beta$.
It gives an orthogonal decomposition
$\mathcal H=\mathcal H_0\oplus\mathcal H_1$ and an invertible
operator $S$ such that
\[
 \|S\|\|S^{-1}\|<1+\gamma,
 \qquad
 S^{-1}TS=
 \begin{pmatrix}\widehat D&E\\F&G\end{pmatrix},
\]
where $\widehat D$ has an infinite-dimensional reducing null
subspace. Moreover,
\[
 \|\widehat D\|\le1+\gamma,
 \qquad
 \|\widehat D\|_r\le(1+\gamma)\|T\|_r.
\]
By Theorem~\ref{lem:stable},  there exist $C_0\in \mathcal{S}_p(\mathcal H_0)$ and $Z_0\in \mathcal{S}_q(\mathcal H_0)$ such that
$\widehat D=[C_0,Z_0]$ and
\[
 \begin{aligned}
 \|C_0\|&\le2(1+\gamma)^{r/p},
 &\qquad \|C_0\|_p&\le2(1+\gamma)^{r/p}M_p,\\
 \|Z_0\|&\le2(1+\gamma)^{r/q},
 &\qquad \|Z_0\|_q&\le2(1+\gamma)^{r/q}M_q.
 \end{aligned}
\]
Set
\[
 C=\frac{C_0}{2(1+\gamma)^{r/p}},
 \qquad
 Z=2(1+\gamma)^{r/p}Z_0.
\]
Since $r/p+r/q=1$, these operators satisfy
\[
 [C,Z]=\widehat D,\qquad
 \|C\|\le1,\qquad \|C\|_p\le M_p,
\]
and
\[
 \|Z\|\le4(1+\gamma),
 \qquad
 \|Z\|_q\le4(1+\gamma)M_q.
\]

Use the conclusion of Lemma~\ref{lem:preparation} for this $C$
to obtain $A_G,B_G\in \mathcal{S}_p(\mathcal H_1)$ and
\[
 X\in \mathcal{S}_1(\mathcal H_1,\mathcal H_0),
 \qquad
 Y\in \mathcal{S}_1(\mathcal H_0,\mathcal H_1).
\]
Let
\[
 \widetilde A=
 \begin{pmatrix}C&0\\0&A_G\end{pmatrix},
 \qquad
 \widetilde B=
 \begin{pmatrix}Z&X\\Y&B_G\end{pmatrix}.
\]
Then
\[
 \begin{aligned}
 [\widetilde A,\widetilde B]
 &=
 \begin{pmatrix}
 [C,Z]&CX-XA_G\\
 A_GY-YC&[A_G,B_G]
 \end{pmatrix}\\
 &=
 \begin{pmatrix}\widehat D&E\\F&G\end{pmatrix}
 =S^{-1}TS.
 \end{aligned}
\]

The estimates in \eqref{eq:complementarybounds} give
\[
 \|\widetilde A\|\le\frac54+\beta,
 \qquad
 \|\widetilde A\|_p\le3M_p+\beta,
 \qquad
 \left\|\begin{pmatrix}0&X\\Y&B_G\end{pmatrix}\right\|_p
 <3\beta.
\]
Since $p\le q$, the last block operator also has
$\mathcal{S}_q$-norm less than $3\beta$. Therefore
$\widetilde A\in \mathcal{S}_p(\mathcal H)$,
$\widetilde B\in \mathcal{S}_q(\mathcal H)$, and
\[
 \|\widetilde B\|<4(1+\gamma)+3\beta,
 \qquad
 \|\widetilde B\|_q<4(1+\gamma)M_q+3\beta.
\]
Set
\[
 A=S\widetilde AS^{-1},
 \qquad B=S\widetilde BS^{-1}.
\]
Then $A\in \mathcal{S}_p(\mathcal H)$, $B\in \mathcal{S}_q(\mathcal H)$ and
$[A,B]=T$. The choice of $\gamma,\beta$
yields
\[
 \begin{aligned}
 \|A\|\|B\|
 &<(1+\gamma)^2
   \left(\frac54+\beta\right)
   \bigl(4(1+\gamma)+3\beta\bigr)<6,\\
 \|A\|_p\|B\|_q
 &<(1+\gamma)^2
   (3M_p+\beta)
   \bigl(4(1+\gamma)M_q+3\beta\bigr)\\
 &<16M_p M_q
 =16\zeta(r-1)^{1/r}\|T\|_r.
 \end{aligned}
\]

Finally, for an arbitrary nonzero $T$ of infinite rank, apply
the normalized construction to $T/\|T\|$, obtaining factors
$A_0,B_0$, and set
\[
 A=\|T\|^{r/p}A_0,
 \qquad B=\|T\|^{r/q}B_0.
\]
Again using $r/p+r/q=1$, we obtain $[A,B]=T$ and
\[
 \|A\|\|B\|<6\|T\|,
 \qquad
 \|A\|_p\|B\|_q
 <16\zeta(r-1)^{1/r}\|T\|_r.
\]
Together with the finite-rank case and the symmetry in $p,q$,
this proves the theorem.
\end{proof}

Brown's result \cite{Brown} already establishes the necessity of the condition $r>2$. His idea also suggests  a quantitative lower
bound for $\|A\|_p\|B\|_q$ when $[A,B]$ is a rank-one orthogonal projection
; see also
\cite[proof of Theorem~7.3]{DFWW}.

\begin{proposition}\label{prop:lower}
Let $1\leq r,p,q<\infty$ satisfy
$r^{-1}=p^{-1}+q^{-1}$.  Let $P$ be a rank-one orthogonal projection.
If $P=[A,B]$ with $A\in \mathcal{S}_p(\mathcal{H})$ and
$B\in \mathcal{S}_q(\mathcal{H})$, then $r>2$ and
\begin{equation*}
\|A\|_p\|B\|_q
\geq\frac12\zeta(r-1)^{1/r}.
\end{equation*}
\end{proposition}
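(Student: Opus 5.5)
The plan is to turn the trace identity $\operatorname{Tr}P=1$ into a sequence of ``boundary'' trace identities, one for each level of a nested chain of finite-rank projections. Each identity forces a definite amount of $\mathcal S_r$-mass to sit off the diagonal of $A$ and $B$. Summing these masses with the right weights should then produce the factor $\zeta(r-1)$ and, at the same time, force $r>2$. This is the quantitative form of Brown's argument, in the spirit of \cite[proof of Theorem~7.3]{DFWW}.

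\textbf{Step 1 (nested decomposition).} Write $P=e\otimes e^*$. Let $\mathcal K_0=\mathbb Ce$. Let $\mathcal K_{\le n}$ be the span of all words of length at most $n$ in $A,A^*,B,B^*$ applied to $e$, and set $\mathcal K_n=\mathcal K_{\le n}\ominus\mathcal K_{\le n-1}$. Relative to $\bigoplus_n\mathcal K_n$, both $A$ and $B$ are block tridiagonal. If the chain stabilizes, then $[A,B]=P$ is compressed to a finite-dimensional reducing subspace, and there it would have trace $1$. That is impossible for a commutator of matrices, so the chain is infinite. However, the blocks may have large dimension. To keep control I will instead refine the chain to projections $Q_n$ of rank exactly $n$ with $Q_1=P$, chosen by adding one vector at a time in the Krylov order, so that $Q_nAQ_n^\perp$ and $Q_nBQ_n^\perp$ have rank at most a fixed constant.

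\textbf{Step 2 (boundary identity).} Since $Q_n$ has finite rank and $PQ_n=Q_nP=P$, we have $1=\operatorname{Tr}(Q_n[A,B]Q_n)$. Expanding $A=Q_nAQ_n+\dots$, the diagonal commutator $[Q_nAQ_n,Q_nBQ_n]$ has trace zero. This leaves
\[
1=\operatorname{Tr}\bigl(Q_nAQ_n^\perp BQ_n\bigr)-\operatorname{Tr}\bigl(Q_nBQ_n^\perp AQ_n\bigr).
\]
Let $a_n$ and $b_n$ denote the contributions of the ``new'' directions added at step $n$. For these, the two traces are bounded by $a_n b_n$ up to the constant $2$; this is where the factor $\tfrac12$ will come from. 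The key inequality I aim for is
\[
1\le 2\Bigl(\sum_{k\ge n}a_k^{p}\Bigr)^{1/p}\Bigl(\sum_{k\ge n}b_k^{q}\Bigr)^{1/q}\quad\text{for all }n,
\]
together with $\sum_k a_k^p\le\|A\|_p^p$ and $\sum_k b_k^q\le\|B\|_q^q$. These hold because the pieces are supported in orthogonal positions, so Proposition~\ref{prop:norms} applies blockwise.

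\textbf{Step 3 (summation).} Let $\alpha_n=\sum_{k\ge n}a_k^p$ and $\beta_n=\sum_{k\ge n}b_k^q$. Step~2 gives $\alpha_n^{1/p}\beta_n^{1/q}\ge\tfrac12$ for every $n$. Raising to the power $r$ and applying H\"older with exponents $p/r$ and $q/r$ to $\sum_n(\alpha_n\beta_n)^{\cdot}$, and using the tail identities $\sum_n\alpha_n=\sum_k k\,a_k^p$ and $\sum_n\beta_n=\sum_k k\,b_k^q$, I then want to extract the weights $n\cdot n^{-r}$. Concretely, the bound should take the form
\[
\|A\|_p^r\|B\|_q^r\ \ge\ 2^{-r}\sum_{n}n^{1-r}=2^{-r}\zeta(r-1).
\]
This gives the inequality of Proposition~\ref{prop:lower}. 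It also forces $\zeta(r-1)<\infty$, that is, $r>2$.

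\textbf{Main obstacle.} The main obstacle is Step~3: getting the weights $n^{1-r}$ exactly, rather than a lossy version. The difficulty is that the pointwise tail bounds $\alpha_n^{1/p}\beta_n^{1/q}\ge\tfrac12$ are not literally of that shape. The natural route is an extremal argument: minimize $\|A\|_p\|B\|_q$ subject to these constraints and show the extremum is attained by Anderson-type weights $a_k\sim k^{-r/p}$ and $b_k\sim k^{-r/q}$. Those weights match Theorem~\ref{lem:stable}, which confirms that $\zeta(r-1)^{1/r}$ is the right order. If the exact extremal computation proves messy, I would first try a direct rearrangement inequality applied to the decreasing tails. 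Controlling the rank of the boundary pieces in Step~1 is the secondary technical point.
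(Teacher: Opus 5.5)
There is a genuine gap, and it is already in Steps~1--2, not only in Step~3. The key inequality you aim for,
\[
1\le 2\Bigl(\sum_{k\ge n}a_k^{p}\Bigr)^{1/p}\Bigl(\sum_{k\ge n}b_k^{q}\Bigr)^{1/q}\quad\text{for all }n,
\qquad \sum_k a_k^p\le\|A\|_p^p,\quad \sum_k b_k^q\le\|B\|_q^q,
\]
is self-contradictory. The right-hand side is a product of tails of convergent series, so it tends to $0$. Hence it cannot follow from $P=[A,B]$. If it did, no rank-one projection would be such a commutator for any $r$, contradicting Theorem~\ref{thm:main}.

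The faulty step is bounding $\operatorname{Tr}(Q_nAQ_n^\perp BQ_n)$ by an $\mathcal S_p$-norm times an $\mathcal S_q$-norm with no dimension factor. Since $p^{-1}+q^{-1}=r^{-1}<1$, only $|\operatorname{Tr}X|\le(\operatorname{rank}X)^{1-1/r}\|X\|_r$ is available. For instance, a rank-$d$ projection $E$ has $\operatorname{Tr}(E\cdot E)=d$ while $\|E\|_p\|E\|_q=d^{1/r}$.

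For the same reason, the refinement in Step~1 to boundaries of uniformly bounded rank cannot exist under the hypothesis. Suppose $\operatorname{rank}(Q_nAQ_n^\perp)$ and $\operatorname{rank}(Q_nBQ_n^\perp)$ were at most $c$. Then your Step~2 identity would give $1\le c^{1-1/r}\bigl(\|A(Q-Q_n)\|_p\|B\|_q+\|B(Q-Q_n)\|_q\|A\|_p\bigr)\to0$, where $Q$ projects onto the reducing subspace exhausted by the chain. The growth of the boundary ranks is not a nuisance to be removed; it is exactly where $\zeta(r-1)$ comes from. On the other hand, once $A^*,B^*$ are allowed in the words, $AB-BA=P$ gives no polynomial control on $\dim\mathcal K_n$, so the rank factor would not produce the weights $n^{1-r}$ either.

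The missing idea is Brown's dimension count, which the paper invokes. Build the chain from words in $A$ and $B$ only, and let $\mathcal L_n$ be the span of words of length at most $n$ applied to $e$. Since $AB=BA+P$ and $\operatorname{ran}P=\mathbb Ce$, every word of length $n$ can be reordered into $B^jA^ie$ with $i+j=n$, modulo shorter words. So the layer projections $P_n$ onto $\mathcal L_n\ominus\mathcal L_{n-1}$ have rank at most $n+1$.

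You lose tridiagonality, but with $Q_n=P_0+\cdots+P_n$ you still have $AQ_n=Q_{n+1}AQ_n$ and $BQ_n=Q_{n+1}BQ_n$. This already reduces your Step~2 identity to the single-level identity
\[
1=\operatorname{Tr}(P_nAP_{n+1}BP_n)-\operatorname{Tr}(P_nBP_{n+1}AP_n)\qquad(n\ge0).
\]
The rank factor then gives
\[
(n+1)^{1/r-1}\le\|P_nAP_{n+1}\|_p\|P_{n+1}BP_n\|_q+\|P_nBP_{n+1}\|_q\|P_{n+1}AP_n\|_p.
\]
The blocks $P_nAP_{n+1}$ (and likewise $P_{n+1}AP_n$, $P_nBP_{n+1}$, $P_{n+1}BP_n$) occupy disjoint row and column positions, so $\sum_n\|P_nAP_{n+1}\|_p^p\le\|A\|_p^p$, and similarly for the others. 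Minkowski and H\"older in $\ell^r$ give $\bigl(\sum_{n=0}^{N}(n+1)^{1-r}\bigr)^{1/r}\le2\|A\|_p\|B\|_q$ for every $N$. This produces the weights $(n+1)^{1-r}$ level by level, with no extremal problem, and letting $N\to\infty$ yields both $r>2$ and the constant $\tfrac12\zeta(r-1)^{1/r}$.

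Your Step~3, by contrast, starts from a false inequality. The extremal argument with Anderson-type weights is at best a heuristic for sharpness, not a proof of the lower bound.
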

\begin{proof}
The conclusion $r>2$ follows directly from Brown's theorem \cite{Brown}. Applying the trace computation in his proof, with the initial subspace $\operatorname{ran} P$, yields a sequence of mutually orthogonal finite-rank projections $\{P_n\}_{n \ge 0}$ such that
\begin{equation*}
P_0=P,
\qquad
\dim\operatorname{ran}P_n\leq n+1,
\end{equation*}
and
\begin{equation*}\label{eq:rankone-boundary-trace}
1=\operatorname{Tr}(P_nAP_{n+1}BP_n)
-\operatorname{Tr}(P_nBP_{n+1}AP_n)
\qquad(n\geq0).
\end{equation*}
The block estimates are 
\begin{equation*}
    \|(\|P_nAP_{n+1}\|_p)_{n=0}^\infty\|_{\ell^p}\leq \|A\|_p,\quad  \|(\|P_nBP_{n+1}\|_q)_{n=0}^\infty\|_{\ell^q}\leq \|B\|_q.
\end{equation*}
Similarly, applying these estimates to $A^*$ and $B^*$ gives
\begin{equation*}
    \|(\|P_{n+1}AP_{n}\|_p)_{n=0}^\infty\|_{\ell^p}\leq \|A\|_p,\quad  \|(\|P_{n+1}BP_{n}\|_q)_{n=0}^\infty\|_{\ell^q}\leq \|B\|_q.
\end{equation*}

Set $s=r/(r-1)$, so $r^{-1}+s^{-1}=1$.
Note that for every $T=P_nTP_n$, Proposition~\ref{prop:norms} gives
\begin{equation*}
|\operatorname{Tr}T|
=|\operatorname{Tr}(P_nT)|
\leq \|P_nT\|_1
\leq\|P_n\|_s\|T\|_r
\leq(n+1)^{1-1/r}\|T\|_r.
\end{equation*}
Applying this estimate to $T=P_nAP_{n+1}BP_n-P_nBP_{n+1}AP_n$ gives
\begin{equation*}
\begin{aligned}
(n+1)^{1/r-1}
&\leq\|P_nAP_{n+1}BP_n\|_r
+\|P_nBP_{n+1}AP_n\|_r\\
&\leq\|P_nAP_{n+1}\|_p\|P_{n+1}BP_n\|_q+\|P_nBP_{n+1}\|_q\|P_{n+1}AP_n\|_p.
\end{aligned}
\end{equation*}

By Minkowski's inequality and H\"older's
inequality,
\begin{equation*}
\begin{aligned}
\left(\sum_{n=0}^{N}(n+1)^{1-r}\right)^{1/r}
&\leq \left(\sum_{n=0}^{N} (\|P_nAP_{n+1}\|_p\|P_{n+1}BP_n\|_q+\|P_nBP_{n+1}\|_q\|P_{n+1}AP_n\|_p)^r\right)^{1/r}\\
&\leq \left(\sum_{n=0}^{N} (\|P_nAP_{n+1}\|_p\|P_{n+1}BP_n\|_q)^r  \right)^{1/r}\\
&\quad+\left(\sum_{n=0}^{N} (\|P_nBP_{n+1}\|_q\|P_{n+1}AP_n\|_p)^r  \right)^{1/r}\\
&\leq
\left(
\sum_{n=0}^{N}\|P_nAP_{n+1}\|_p^p
\right)^{1/p}
\left(
\sum_{n=0}^{N}\|P_{n+1}BP_n\|_q^q
\right)^{1/q}\\
&\quad+
\left(
\sum_{n=0}^{N}\|P_nBP_{n+1}\|_q^q
\right)^{1/q}
\left(
\sum_{n=0}^{N}\|P_{n+1}AP_n\|_p^p
\right)^{1/p}\\
&\leq2\|A\|_p\|B\|_q,
\end{aligned}
\end{equation*}
Then we obtain
\begin{equation*}
\zeta(r-1)^{1/r}
=\left(\sum_{n=0}^{\infty}(n+1)^{1-r}\right)^{1/r}
\leq2\|A\|_p\|B\|_q.
\end{equation*}
\end{proof}
\begin{remark}
For $r>2$,
$\zeta(r-1)
\geq\int_1^\infty x^{1-r}\,dx
=\frac1{r-2}. $
Combining the main theorem and Proposition~\ref{prop:lower} shows that there is no universal constant $C>0$, independent of $r,p,q$, such that for any $T\in \mathcal{S}_r(\mathcal{H})$ $(r>2)$, one can choose
$A\in \mathcal{S}_p(\mathcal{H})$ and $B\in \mathcal{S}_q(\mathcal{H})$ satisfying
\begin{equation*}
T=[A,B],
\qquad
\|A\|_p\|B\|_q\leq C\|T\|_r.
\end{equation*}
\end{remark}

\section*{AI statement}
The authors used GPT 6 astra during the development of this work for searching necessary references, examining mathematical arguments, and language editing.  The authors have checked and substantially revised all mathematical arguments and take full responsibility for the final version.

\end{document}